\begin{document}

\newtheorem{lem}{Lemma}[section]
\newtheorem{pro}[lem]{Proposition}
\newtheorem{thm}[lem]{Theorem}
\newtheorem{cor}[lem]{Corollary}

\newcommand{\bgamma}{\boldsymbol{\gamma}}
\newcommand{\bdelta}{\boldsymbol{\delta}}
\newcommand{\btheta}{\boldsymbol{\theta}}
\newcommand{\bsigma}{\boldsymbol{\sigma}}
\newcommand{\brho}{\boldsymbol{\rho}}

\subjclass{Primary 58F05. Secondary , 58E05, 53D40}

\title{Symplectic displacement energy for exact Lagrangian immersions}

\author{Manabu AKAHO}
\address{Department of Mathematics and Information Sciences,
Tokyo Metropolitan University, 
1-1 Minami-Ohsawa, Hachioji, Tokyo 192-0397, Japan}
\email{akaho@tmu.ac.jp}

\thanks{Supported by JSPS Grant-in-Aid for Young Scientists (B)}

\maketitle

\begin{abstract}
We give an inequality of the displacement energy for exact Lagrangian immersions 
and the symplectic area of punctured holomorphic discs. 
Our approach is based on Floer homology for Lagrangian immersions \cite{ak} and 
Chekanov's homotopy technique of continuations \cite{ch1}. 
\end{abstract}

\section{\bf Introduction}
Let $(M,\omega)$ be a symplectic manifold,
and $\iota:L\to M$ a Lagrangian immersion, i.e.
$\iota:L\to M$ is an immersion which satisfies
$\dim L=\dim M/2$ and $\iota^*\omega=0$.
We call a Lagrangian immersion $\iota:L\to M$ {\it exact}~if
\[
\int_{D^2}v^*\omega=0
\]
for any pair of smooth maps $v:D^2\to M$ and $\bar{v}:\partial D^2\to L$ 
such that $v|_{\partial D^2}=\iota\circ\bar{v}$,
where $D^2:=\{z\in\mathbb{C}:|z|\leq 1\}$.
Let $K(\iota)$ denote the set of the pairs of 
smooth maps $v:D^2\to M$ and $\bar{v}:[0,1]\to L$ such that: 
\begin{itemize}
\item $\bar{v}(0)\neq \bar{v}(1)$ and $\iota\circ\bar{v}(0)=\iota\circ\bar{v}(1)$,
\vspace{3pt}
\item $v|_{\partial D^2}=\iota\circ \bar{v}$, 
where we identify $e^{2\pi i\theta}\in\partial D^2$ with $\theta\in [0,1]$.
\end{itemize}
Then we define $\sigma$ by
\[
\sigma:=\inf
\left\{
\int_{D^2}v^*\omega :
(v,\bar{v})\in K(\iota) \mbox{ with } \int_{D^2}v^*\omega>0
\right\}.
\]
Note that $\sigma=\infty$ if $\int_{D^2}v^*\omega=0$ for any $(v,\bar{v})\in K(\iota)$.

A smooth function $H: [0,1]\times M\to \mathbb{R}$ defines 
the time-dependent Hamiltonian vector field $X_H$ on $M$ by $dH=\omega(X_H,\cdot)$,
and $\varphi^H_t$ denotes the Hamiltonian isotopy generated by $X_H$, i.e.
$\varphi^H_t:M\to M$ is given by
\[
\frac{d\varphi^H_t}{dt}=X_H\circ\varphi^H_t
\quad \mbox{and} \quad 
\varphi^H_0={\rm id}.
\]
We call $\varphi^H_1$ the time one map generated by $X_H$.
If $M$ is non-compact, we assume that $H$ is compactly supported. 
Then, following Hofer \cite{ho1}, we define a norm of $H$ by
\[
\| H\|:=\int_0^1\left(
\max_{x\in M}H(t,x)-\min_{x\in M}H(t,x)\right)dt.
\]

Our main theorem is the following:

\begin{thm}
Let $(M,\omega)$ be a closed symplectic manifold
or a non-compact symplectic manifold with convex end,
and $\iota:L\to M$ an exact Lagrangian immersion from a closed manifold $L$.
Suppose the non-injective points of $\iota: L\to M$ are transverse.
Let $\varphi^H_1$ be the time one map generated by $X_H$.
If $\|H\|<\sigma$, and 
$\iota:L\to M$ and $\varphi^H_1\circ \iota:L\to M$ intersect transversely, then 
\[
\sharp \left\{(x,x')\in L\times L: \iota(x)=(\varphi^H_1\circ\iota)(x')\right\}
\geq \sum_{k=0}^{\dim L}\dim H_k(L;\mathbb{Z}_2).
\]
\end{thm}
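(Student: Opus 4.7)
The plan is to realize the intersection points as generators of a Floer chain complex $CF(\iota,\varphi^H_1\circ\iota)$ over $\mathbb{Z}_2$, show that the hypothesis $\|H\|<\sigma$ makes this complex well-defined and invariant under Hamiltonian perturbation, and finally identify its homology with $H_*(L;\mathbb{Z}_2)$; the desired inequality then follows from the trivial bound $\operatorname{rank} HF \leq \#\,\text{generators}$.

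First, I would set up the Floer data following [ak]: pick a generic compatible almost complex structure $J$ and, if necessary, a small Morse perturbation, so that the transverse intersection points of $\iota$ and $\varphi^H_1\circ\iota$ become the generators of $CF(\iota,\varphi^H_1\circ\iota)$ and the moduli spaces of pseudo-holomorphic strips with boundary conditions on $\iota(L)$ and $\varphi^H_1\circ\iota(L)$ are cut out transversely. The differential counts rigid strips, and the chain complex axiom $\partial^2=0$ depends on a Gromov-type compactness result in the immersed setting. In that compactness, two kinds of bubbling are a priori possible: ordinary disc bubbles with boundary on $\iota(L)$, and \emph{punctured} holomorphic discs whose boundary has a corner at a self-intersection point of $\iota$. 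The first kind is killed by exactness; the second kind carries symplectic area $\geq \sigma$ by the very definition of $\sigma$.

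The crux of the proof is therefore an energy estimate. Using Chekanov's homotopy-of-homotopies construction [ch1], I would interpolate linearly between the identity Hamiltonian and $H$, producing a continuation chain map and its homotopy inverse whose trajectories carry energy bounded above by $\|H\|$ (the standard calculation in Hofer geometry). Because $\|H\|<\sigma$, no punctured-disc bubble can appear in any of these continuation moduli spaces, so compactness holds and one obtains a chain-homotopy equivalence
\[
CF(\iota,\iota)\simeq CF(\iota,\varphi^H_1\circ\iota).
\]
Here the left-hand side is defined by a small Morse perturbation of $\iota$ and computes $H_*(L;\mathbb{Z}_2)$, as in [ak]. Combining these two facts yields
\[
\sum_{k=0}^{\dim L}\dim H_k(L;\mathbb{Z}_2)=\operatorname{rank} HF(\iota,\varphi^H_1\circ\iota)\leq \#\,\text{generators},
\]
which is the claimed inequality.

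The step I expect to be the main obstacle is the bubbling analysis for the continuation maps: one must run the energy estimate not just for the chain maps but also for the homotopies (and possibly higher homotopies) used in Chekanov's scheme, and verify that the Hofer bound $\|H\|$ continues to control the area along the entire family of almost complex structures and Hamiltonian perturbations; a careful monotone/linear reparametrization is needed so that the strict inequality $\|H\|<\sigma$ rules out the simultaneous appearance of a punctured disc and a broken Floer trajectory, rather than only ruling out each in isolation. The remaining work consists of verifying that the immersed Floer package of [ak] is robust enough to accommodate these one- and two-parameter families, which is technical but essentially parallel to the embedded case.
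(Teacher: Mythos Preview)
Your outline captures the overall strategy --- Chekanov-style continuation, Hofer-norm energy bound versus $\sigma$, comparison with Morse theory on $L$ --- but two of the steps fail as stated, and they are exactly where the paper's argument does real work.

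First, the claim that $CF(\iota,\iota)$ (after a small Morse perturbation) computes $H_*(L;\mathbb{Z}_2)$ is false for a genuine immersion. After a $C^1$-small perturbation $sH$, the generator set $c_s$ splits into two pieces: chords $a_s$ near the diagonal, which correspond to Morse critical points, and chords $b_s$ near each transverse double point of $\iota$ (two extra generators per double point). Only the submodule $A_s$ generated by $a_s$ has homology $H_*(L;\mathbb{Z}_2)$. The paper must first prove (Corollaries 3.9 and 3.11) that for small $s$ no low-energy trajectory connects $a_s$ and $b_s$, so that $A_s$ is actually a subcomplex, and then localize near the zero section in $T^*L$ (Proposition 3.13, Theorem 3.14). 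Correspondingly the continuation maps are built as $\Phi_+:A_s\to C_S$ and $\Phi_-:C_S\to A_s$, and only the one-sided relation $\Phi_-\circ\Phi_+\sim\mathrm{id}_{A_s}$ is established. This yields injectivity of $\Phi_{+*}$, which suffices for the inequality, but not the chain-homotopy equivalence you assert; the energy estimate for the reverse homotopy on $C_S$ is not available.

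Second, you have not explained why $\partial^2=0$ holds on the target complex $CF(\iota,\varphi^H_1\circ\iota)$. At $s$ close to $1$ the energies of Floer strips are not a priori bounded by $\sigma$, so punctured-disc bubbling is not excluded by the observation that such discs have area $\ge\sigma$. The paper resolves this with Chekanov's \emph{distinguished} device: choose $\kappa\in(\|H\|,\sigma)$ and an interval $[b_-,b_+)$ of length $\kappa$, set $f_s:=F_s\bmod\kappa\in[b_-,b_+)$, and count only those trajectories for which the action drop $F_s(\bgamma)-F_s(\bdelta)$ equals $f_s(\bgamma)-f_s(\bdelta)$. Such trajectories automatically have energy $<\kappa<\sigma$ (Lemma 3.3), which gives compactness and $\partial_s^2=0$ for every $s$. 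The same $f$-condition is imposed on the continuation and homotopy moduli spaces, and the proofs (Lemmas 4.3 and 5.2) that distinguished continuation trajectories satisfy $E(u)<\kappa$ use both $\|H\|<\kappa$ and the smallness of $|f_s|$ on $a_s$; this is precisely where the asymmetry between $A_s$ and $C_S$ enters and why only the one-sided homotopy can be constructed.
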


Since we use pseudoholomorphic curves, 
we put the convex end condition when $M$ is non-compact;
for example, the cotangent bundles of closed manifolds equipped with 
the canonical symplectic structure, 
the symplectic vector spaces $\mathbb{C}^n$ and so on.

Following Hofer \cite{ho2}, 
we define the {\it symplectic displacement energy} $e(A)$ for a subset $A\subset M$ by
\[
e(A):=\inf
\left\{\|H\| :
\begin{array}{l}
A\cap \varphi^H_1(A)=\emptyset, 
\mbox{ where } \varphi^H_1 \mbox{ is the time one}
\\
\mbox{map generated by }X_H
\end{array}
\right\}.
\]
Note that $e(A)=\infty$ if $A\cap \varphi^H_1(A)\neq \emptyset$ for any $\varphi^H_1$.

Since we may perturb $H$ to be generic so that 
$\iota:L\to M$ and $\varphi^H_1\circ \iota:L\to M$ intersect transversely, 
we obtain the following corollary:

\begin{cor}
For our exact Lagrangian immersion $\iota:L\to M$,
\[
\sigma\leq e(\iota(L)).
\]
\end{cor}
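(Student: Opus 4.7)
The plan is to derive the corollary directly from the main theorem by a straightforward contradiction argument, exploiting the fact that the definition of $\sigma$ as an infimum only requires existence of arbitrarily close displacing Hamiltonians, while the main theorem forbids any displacing Hamiltonian of norm less than $\sigma$.

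First I would treat the trivial case $e(\iota(L))=\infty$ separately, where nothing needs to be shown. Assuming $e(\iota(L))<\infty$, the idea is to argue by contradiction: suppose $\sigma>e(\iota(L))$. Then by the definition of the symplectic displacement energy as an infimum, there exists a compactly supported Hamiltonian $H$ with $\|H\|<\sigma$ and $\iota(L)\cap\varphi^H_1(\iota(L))=\emptyset$.

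The next step is to install the transversality hypothesis required by the theorem. Because $\iota(L)$ and $\varphi^H_1(\iota(L))$ are disjoint compact subsets of $M$, they are separated by a positive distance, so any sufficiently $C^1$-small perturbation $H'$ of $H$ still satisfies $\iota(L)\cap\varphi^{H'}_1(\iota(L))=\emptyset$ and $\|H'\|<\sigma$; transverse intersection of $\iota$ with $\varphi^{H'}_1\circ\iota$ then holds vacuously. (Alternatively one can invoke the genericity of transversality within arbitrary neighborhoods of $H$ in the space of Hamiltonians, which is the point emphasized in the sentence preceding the corollary.)

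Applying the main theorem to $H'$ would then yield
\[
\sharp\left\{(x,x')\in L\times L:\iota(x)=(\varphi^{H'}_1\circ\iota)(x')\right\}\geq\sum_{k=0}^{\dim L}\dim H_k(L;\mathbb{Z}_2)\geq 1,
\]
contradicting the disjointness $\iota(L)\cap\varphi^{H'}_1(\iota(L))=\emptyset$. Hence $\sigma\leq e(\iota(L))$. I do not foresee a genuine obstacle here; the only delicate point is the perturbation step, and that is trivialized by the fact that transversality at an empty intersection is automatic, so one may apply the main theorem to $H$ itself without any perturbation.
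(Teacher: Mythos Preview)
Your proposal is correct and matches the paper's approach exactly. The paper gives no formal proof of this corollary at all; it simply remarks, in the sentence preceding the statement, that one may perturb $H$ to make $\iota$ and $\varphi^H_1\circ\iota$ transverse, and leaves the rest implicit --- your write-up just fills in the obvious contradiction argument (and your observation that transversality is vacuous at an empty intersection makes even the perturbation step unnecessary).
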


The symplectic displacement energy for Lagrangian {\it submanifolds} 
was early discussed by 
Polterovich \cite{po1}, Chekanov \cite{ch1}, \cite{ch2} and Oh \cite{oh1}.
We call a Lagrangian submanifold $L\subset M$ {\it rational} if 
\[
\left\{\int_{D^2}v^*\omega:v(D^2,\partial D^2)\to (M,L)\right\}
=\Sigma \mathbb{Z}
\]
for some $\Sigma>0$. 
Polterovich \cite{po1} proved the following theorem;
his approach is based on Gromov's theory of pseudoholomorphic curves:

\begin{thm}
For a rational Lagrangian submanifold $L\subset M$,
\[
\frac{1}{2}\Sigma\leq e(L).
\]
\end{thm}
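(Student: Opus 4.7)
I would follow Polterovich's original approach via Gromov's theory of pseudoholomorphic discs; full Floer homology machinery is not needed. Argue by contradiction: suppose there is a compactly supported $H$ with $\|H\|<\tfrac{1}{2}\Sigma$ and $\varphi^H_1(L)\cap L=\emptyset$. The target is to produce a non-constant pseudoholomorphic disc $u\colon(D^2,\partial D^2)\to(M,L)$ with symplectic area $0<\int_{D^2}u^*\omega<\Sigma$, which is forbidden by rationality.

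The first step is to rescale the Hamiltonian and consider the family of Lagrangians $L_s:=\varphi^{sH}_1(L)$ for $s\in[0,1]$, together with a generic family $J_s$ of $\omega$-compatible almost complex structures. At $s=0$ one has $L_0=L$ (the diagonal case, with a large constant-solution moduli space); at $s=1$ the intersection $L\cap L_1$ is empty, so any Floer-strip or continuation-strip moduli space must be empty. Comparing these two ends across the parameter $s$ forces the parametrized moduli space to be non-compact in the interior of $[0,1]$.

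The next step is Gromov compactness. The only available non-compactness mechanism, given the absence of intersection points at $s=1$, is the bubbling off of a non-constant pseudoholomorphic disc with boundary on some $L_{s^*}$. Pulling back by $(\varphi^{s^*H}_1)^{-1}$, a symplectomorphism, transports the bubble to a pseudoholomorphic disc (for a new compatible almost complex structure) with boundary on $L$ itself and the same symplectic area. By rationality that area is a positive multiple of $\Sigma$.

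The final step is the action estimate: the bubbling disc arises as a limit of continuation strips, and a standard computation (action of the continuation equation versus the Hofer oscillation of the interpolating Hamiltonian) bounds its symplectic area by $2\|H\|$. Since $2\|H\|<\Sigma$, this contradicts rationality and yields $e(L)\ge\tfrac{1}{2}\Sigma$.

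The main obstacle, and the technical heart of Polterovich's paper, is executing the bubbling analysis so as to guarantee (i) that the limit object is a disc attached to a Hamiltonian image of $L$, rather than, for instance, a sphere bubble that gives no information about $L$, and (ii) that the area bound is precisely $2\|H\|$ rather than $\|H\|$ or some larger constant. The factor $2$ reflects the fact that the capping disc must be assembled from a pair of half-strips whose combined symplectic area is controlled by the total oscillation of $H$; making this quantitative, while ensuring compactness of the modified moduli space uses only the rationality constant $\Sigma$ and not any further sphere-bubbling hypothesis, is where I would spend the most care.
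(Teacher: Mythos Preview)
The paper does not prove this statement. Theorem~1.3 is quoted as background from Polterovich~\cite{po1}, with only the one-line remark that ``his approach is based on Gromov's theory of pseudoholomorphic curves''; no argument is given. There is therefore nothing in the paper to compare your proposal against.

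As an outline of Polterovich's strategy your sketch has the right overall shape---a one-parameter family interpolating between $L$ and its displaced image, non-compactness forced by the empty intersection at $s=1$, a disc bubble extracted via Gromov compactness, and an area bound contradicting rationality. However, your description mixes two distinct frameworks. Polterovich does \emph{not} use Floer strips or continuation equations; he works directly with genuine $J$-holomorphic discs with boundary on $L$ (subject to an incidence condition), and the factor $2$ arises from his specific capping construction, not from ``a pair of half-strips.'' By contrast, the strip/continuation picture you describe is essentially Chekanov's method, which---as the present paper recounts in Theorems~1.4 and~1.5 and then adapts throughout Sections~3--5---yields the sharp bound $\Sigma\le e(L)$ with energy controlled by $\|H\|$, not $2\|H\|$. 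If you carry out the continuation-strip version carefully you will not see the factor $2$ at all; if you want the genuine Polterovich argument you should drop the Floer language and work with discs from the start.
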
 

Moreover, Chekanov \cite{ch1} improved the Polterovich's theorem:

\begin{thm}
For a rational Lagrangian submanifold $L\subset M$,
\[
\Sigma\leq e(L).
\]
\end{thm}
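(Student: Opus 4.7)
The plan is to argue by contradiction following the scheme pioneered by Floer and refined by Chekanov: if $L$ were displaced by a Hamiltonian $H$ with $\|H\|<\Sigma$, then the Lagrangian Floer homology $HF(L,\varphi^H_1(L);\mathbb{Z}_2)$ would vanish on the chain level, whereas rationality together with the norm bound will force it to be isomorphic to the singular homology $H_*(L;\mathbb{Z}_2)\neq 0$.

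First, I would fix a $C^2$-small Morse function $f$ on $L$ and extend it to a compactly supported Hamiltonian $G$ on $M$ with support in a Weinstein neighborhood of $L$, chosen so that $\|G\|$ is arbitrarily small (in particular $\|G\|<\Sigma$). For such $G$ the intersections $L\cap \varphi^G_1(L)$ are in bijection with the critical points of $f$ and, by the classical Floer--Oh computation, $HF(L,\varphi^G_1(L))\cong H_*(L;\mathbb{Z}_2)$. On the other hand, after a small perturbation $H$ still displaces $L$ with $\|H\|<\Sigma$, so $L\cap \varphi^H_1(L)=\emptyset$ and the chain group, hence $HF(L,\varphi^H_1(L))$, is trivially zero.

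Next, I would join $H$ and $G$ by a one-parameter family $\{H_s\}_{s\in [0,1]}$ of Hamiltonians with $\|H_s\|<\Sigma$ uniformly in $s$, and construct a continuation chain map $\Phi$ between the two Floer complexes by counting solutions of the corresponding $s$-dependent Floer equation. Chekanov's homotopy-of-continuations technique then provides chain homotopies comparing the $\Phi$'s built from different homotopies $\{H_s\}$; combined with the constant continuation from each Hamiltonian to itself, this shows $\Phi$ induces an isomorphism on Floer homology. The essential input is the standard energy identity, which bounds the symplectic area of any continuation trajectory by a quantity controlled by $\sup_s\|H_s\|$, hence strictly less than $\Sigma$; by rationality a nonconstant $J$-holomorphic disk with boundary on $L$ carries area in $\Sigma\mathbb{Z}_{>0}$, so disk bubbling is precluded throughout all the relevant moduli spaces and their one-parameter families.

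The main obstacle is precisely this compactness and bubbling analysis for the continuation and homotopy-of-continuations moduli spaces: one must verify carefully that the strict inequality $\|H_s\|<\Sigma$ keeps the energy of every trajectory strictly below the bubbling threshold at each stage of the argument, and that sphere bubbles can be ruled out or absorbed by generic choices of $t$- and $s$-dependent almost complex structures. Once this is in hand, $\Phi:HF(L,\varphi^H_1(L))\to HF(L,\varphi^G_1(L))$ is an isomorphism, yielding the contradiction $0\cong H_*(L;\mathbb{Z}_2)$. Taking the infimum over all displacing $H$ then gives $\Sigma\leq e(L)$.
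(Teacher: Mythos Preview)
Your outline follows the right strategy---this is essentially Chekanov's argument, which the paper cites for Theorem~1.4 and then reproves in detail for immersions---but the energy estimate you invoke is not correct as stated, and that is exactly where Chekanov's key device enters. The energy of a continuation or homotopy-of-continuations trajectory is \emph{not} bounded by $\sup_s\|H_s\|$ alone: the energy identity (Lemma~4.2 in the paper) gives
\[
E(u)=F_{s_-}(\bgamma)-F_{s_+}(\bdelta)-\int\rho'(\tau)\!\int_0^1 H(t,u)\,dt\,d\tau,
\]
so the bound is the action drop \emph{plus} a Hofer-type term. For a rational but non-exact $L$ the action $F$ is only defined modulo $\Sigma$, so the action drop along a trajectory can lie in $\{\text{(small)}+n\Sigma:n\in\mathbb Z\}$; in particular, even the Floer differential of your small Hamiltonian $G$ may see strips of energy near $\Sigma$ or higher, and the unrestricted $\mathbb Z_2$-count need not be finite nor satisfy $\partial^2=0$.

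The paper's (Chekanov's) remedy is the \emph{distinguished} complex: choose a window $[b_-,b_+)$ of length $\kappa$ with $\|H\|<\kappa<\Sigma$, reduce the action to $f\equiv F\pmod\kappa$ with values in that window, and count only those trajectories whose $F$-drop equals the $f$-drop. This forces $E(u)<\kappa<\Sigma$ for every trajectory entering $\partial_s$, $\Phi_\pm$, and the homotopy operator (Lemmas~3.3, 4.3, 5.2), so disk and sphere bubbling are excluded throughout, and the gluing--compactness arguments (Theorems~3.6, 4.6, 5.5) go through. One then gets $\Phi_{-*}\circ\Phi_{+*}=\mathrm{id}$ on $H(A_s,\partial_s)\cong H_*(L;\mathbb Z_2)$. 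With this action-window mechanism inserted, your contradiction is exactly the paper's: if $\|H\|<\Sigma$ and $H$ displaces $L$ then $C_S=0$, so $\Phi_{+*}$ cannot be injective on the nonzero group $H_*(L;\mathbb Z_2)$. Without it, the step ``energy $<\Sigma$, hence no bubbling'' is unjustified.
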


In fact, he introduced a variant of Floer homology and obtained Theorem 1.4 
as a corollary of the following theorem (\cite{ch1}):

\begin{thm}
Let $L\subset M$ be a rational Lagrangian submanifold. If $\|H\|<\Sigma$, and
$L$ and $\varphi^H_1(L)$ intersect transversely, then
\[
\sharp \left(L\cap \varphi^H_1(L)\right)\geq \sum_{k=0}^{\dim L}H_k(L;\mathbb{Z}_2).
\]
\end{thm}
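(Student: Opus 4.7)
The plan is to follow Chekanov's Floer-theoretic strategy: construct a variant of Floer homology $HF(L,\varphi^H_1(L);\mathbb{Z}_2)$ generated by the transverse intersection points, show under the hypothesis $\|H\|<\Sigma$ that it is well-defined and isomorphic to $H_*(L;\mathbb{Z}_2)$, and read off the Morse-type inequality by comparing ranks with the chain-level generating set.

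First I would set up $CF(L,\varphi^H_1(L);\mathbb{Z}_2)$ with differential counting finite-energy pseudoholomorphic strips $u\colon \mathbb{R}\times [0,1]\to M$ satisfying $u(s,0)\in L$ and $u(s,1)\in\varphi^H_1(L)$, modulo $\mathbb{R}$-translation, and verify $\partial^2=0$ by compactifying the one-dimensional moduli spaces. The only new feature compared with standard Floer theory for exact pairs is ruling out disc bubbling: any bubble would contribute an area in $\Sigma\mathbb{Z}_{>0}$ by rationality, but the energy of a limit strip in a Floer trajectory is bounded in terms of $\|H\|<\Sigma$, forcing any bubble to have zero area and hence be constant. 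Sphere bubbles are controlled similarly using the exactness of $\omega$ on small-energy classes.

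Next I would establish invariance of $HF$ by continuation maps $\Phi^{HK}\colon CF(L,\varphi^H_1(L))\to CF(L,\varphi^K_1(L))$ defined by counting solutions of a homotopy-perturbed Floer equation interpolating between $H$ and $K$. The naive energy estimate for a continuation solution involves $\|H\|+\|K\|$, which may exceed $\Sigma$, so bubbling cannot be excluded by a direct a priori bound. Chekanov's homotopy-of-continuations technique enters here: one proves that any two choices of homotopy $H\rightsquigarrow K$ produce chain-homotopic continuation maps via a one-parameter family of interpolations whose effective energy budget can be kept under $\Sigma$, and then composes $\Phi^{HK}$ with a reverse continuation to display it as a quasi-isomorphism. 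Specializing $K$ to be induced by a $C^2$-small Morse function on $L$ transferred to $M$ through a Weinstein neighborhood, the Floer complex collapses onto the Morse complex of $L$, which computes $H_*(L;\mathbb{Z}_2)$.

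The hard part will be the bubbling analysis inside the chain-homotopy construction: the crude $\|H\|+\|K\|$ estimate is too generous, and one must exploit rationality together with Chekanov's particular homotopy-of-homotopies to argue that any sphere or disc bubble forced onto a continuation solution would leave a residual strip component carrying more energy than the hypothesis $\|H\|,\|K\|<\Sigma$ allows. Once this compactness step is secured, invariance gives $HF(L,\varphi^H_1(L);\mathbb{Z}_2)\cong H_*(L;\mathbb{Z}_2)$, and the inequality $\sharp(L\cap\varphi^H_1(L))\geq \dim_{\mathbb{Z}_2} CF \geq \dim_{\mathbb{Z}_2} HF = \sum_{k=0}^{\dim L}\dim H_k(L;\mathbb{Z}_2)$ follows immediately.
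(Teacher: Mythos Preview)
The paper does not give its own proof of Theorem~1.5; it is quoted as Chekanov's result from \cite{ch1}. What the paper \emph{does} prove is the analogous Theorem~1.1 for exact immersions, following Chekanov's scheme, so that proof is the natural template to compare against. Your outline captures the broad architecture (Floer complex, continuation maps, homotopy of continuations, comparison with Morse theory on $L$), but it is missing the one idea that makes the whole thing run.

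Your claim that ``the energy of a limit strip in a Floer trajectory is bounded in terms of $\|H\|<\Sigma$'' is false as stated. For two intersection points $p,q\in L\cap\varphi^H_1(L)$, the energy of a Floer strip from $p$ to $q$ equals the action difference $\mathcal{A}(p)-\mathcal{A}(q)$, and for a merely rational Lagrangian this difference is well-defined only modulo $\Sigma$ and is in no way controlled by $\|H\|$. Thus in the naive complex you propose, disc bubbles of area $\Sigma$ are \emph{not} excluded, and $\partial^2=0$ is not guaranteed. Chekanov's key device---and the core of the paper's Sections~3--5---is to fix $\kappa$ with $\|H\|<\kappa<\Sigma$, reduce the action $F_s$ modulo $\kappa$ to a function $f_s$ with values in an interval of length $\kappa$, and then count only \emph{distinguished} trajectories, namely those with $F_s(\bgamma)-F_s(\bdelta)=f_s(\bgamma)-f_s(\bdelta)$, which forces $E(u)<\kappa<\Sigma$ (Lemma~3.3). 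This filtration is what makes $\partial$, the continuations $\Phi_\pm$, and the homotopy $H_s$ all immune to bubbling; without it your Step~1 already fails.

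A secondary point: the paper (and Chekanov) does not establish an isomorphism $HF\cong H_*(L;\mathbb{Z}_2)$ at level $S$. What is proved is that for small $s$ one has $H(A_s,\partial_s)\cong\bigoplus_k H_k(L;\mathbb{Z}_2)$ (Theorem~3.14), and then only that $\Phi_{-*}\circ\Phi_{+*}=\mathrm{id}$ on $H(A_s,\partial_s)$, whence $\Phi_{+*}$ is \emph{injective} into $H(C_S,\partial_S)$ (Corollary~5.7). Injectivity already yields $\sharp\, c_S\geq\dim H(C_S,\partial_S)\geq\dim H(A_s,\partial_s)=\sum_k\dim H_k(L;\mathbb{Z}_2)$, so your final inequality survives, but the isomorphism you assert is stronger than what the argument delivers.
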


After that, Chekanov \cite{ch2} introduced some homological algebra and
relaxed the assumption of Theorem 1.5; 
and Oh \cite{oh1} used Gromov--Floer theory of perturbed Cauchy--Riemann equation and
simplified the proof of the inequality for the symplectic displacement energy of
Lagrangian submanifolds given in \cite{ch2}.

We observe Corollary 1.2.
Let $(M,\omega)=(\mathbb{R}^2,dx\wedge dy)$ be the 2-dimensional symplectic vector space,
and $\iota:L:=\{e^{i\theta}:0\leq\theta\leq2\pi\}\to \mathbb{R}^2$,
$e^{i\theta}\mapsto (\sin\theta\cos\theta,\sin\theta)$, 
an exact Lagrangian immersion ``{\it figure 8}."
Then $\sigma=\frac{2}{3}$.
On the other hand, from the Hofer--Zehnder capacity \cite{hz}, 
$e(\iota(L))\geq \frac{4}{3}(=2\sigma)$;
and moreover, the following $H$ attains $e(\iota(L))$ and its Hofer norm is $\frac{4}{3}$:
\[
H(x,y):=\begin{cases}
0 & y\leq -1,
\\
-\int_{-1}^y2t\sqrt{1-t^2}dt & -1\leq y\leq 0,
\\
\frac{2}{3}+\int_0^y2t\sqrt{1-t^2}dt & 0\leq y\leq 1,
\\
\frac{4}{3} & 1\leq y.
\end{cases}
\]
(Cut $H$ outside of a large disc to be compactly supported.)
Thus $\sigma=\frac{2}{3}<e(\iota(L))=\frac{4}{3}$.
The author does not know any non-trivial example which attains the equality of Corollary 1.2.
\\

{\it Acknowledgements.}
The author would like to thank K. Irie, H. Iriyeh, M. Kawasaki, 
H. Ohta, K. Ono and F. Ziltener for useful discussions,
and in particular K. Fukaya for unceasing warm encouragement.

\section{\bf Morse theory for Floer homology of exact Lagrangian immersions}

We prepare some notation and review Morse theory 
for our Floer homology of exact Lagrangian immersions.

Let $(M,\omega)$ be a symplectic manifold,
and $\iota:L\to M$ an exact Lagrangian immersion;
and let $H=H(t,x):[0,1]\times M\to \mathbb{R}$ be a smooth function.
Fix $s\in (0,1]$; and $X_{sH}$ is 
the time-$t$ dependent Hamiltonian vector field on $M$ given by $d(sH)=\omega(X_{sH},\cdot)$.
Note that $X_{sH}=sX_H$.
Let $\varphi^{sH}_t:M\to M$ be the Hamiltonian isotopy generated by $X_{sH}$, i.e.
$\varphi^{sH}_t:M\to M$ is given by
\[
\frac{d\varphi^{sH}_t}{dt}=X_{sH}\circ\varphi^{sH}_t
\quad \mbox{and} \quad
\varphi^{sH}_0={\rm id}.
\]

Fix a point $x_0\in L$. 
We define $\Omega$ to be the set of the pairs of smooth maps 
$\gamma:[0,1]\to M$ and $\bar{\gamma}:\{0,1\}\to L$ such that: 
\begin{itemize}
\item $\gamma(0)=\iota\circ\bar{\gamma}(0)$ and $\gamma(1)=\iota\circ\bar{\gamma}(1)$,
\item there is a pair of smooth maps $u:[0,1]\times[0,1]\to M$
and $\bar{u}:[0,1]\times\{0,1\}\to L$ such that:
\begin{itemize}
\item $u(\tau,0)=\iota\circ\bar{u}(\tau,0)$ and $u(\tau,1)=\iota\circ\bar{u}(\tau,1)$,
\item $u(0,t)=\iota(x_0)$, and $\bar{u}(0,0)=x_0$ and $\bar{u}(0,1)=x_0$,
\item $u(u,t)=\gamma(t)$, and $\bar{u}(1,0)=\bar{\gamma}(0)$ and $\bar{u}(1,1)=\bar{\gamma}(1)$.
\end{itemize}
\end{itemize}
Let $\bgamma$ denote $(\gamma,\bar{\gamma})\in\Omega$.
We may think of the tangent space $T_{\bgamma}\Omega$
as the set of the triples of a section $\xi$ of $\gamma^*TM$,
$\xi_0\in T_{\bar{\gamma}(0)}L$ and $\xi_1\in T_{\bar{\gamma}(1)}L$
such that $\xi(0)=\iota_*\xi_0$ and $\xi(1)=\iota_*\xi_1$;
for simplicity, we omit to write the $T_{\bar{\gamma}(0)}L$ and $T_{\bar{\gamma}(1)}L$ components
of $T_{\bgamma}\Omega$.

We define a functional $F_s:\Omega\to \mathbb{R}$ by
\[
F_s(\bgamma):=-\int_{[0,1]\times[0,1]}u^*\omega
-s\int_0^1H(t,\gamma(t))dt.
\]
Since $\iota:L\to M$ is exact Lagrangian, $F_s$ is independent of the choice of $(u,\bar{u})$,
i.e. $F_s$ depends only on $\bgamma$.
The differential $dF_s$ is given by
\[
(dF_s)_{\bgamma}(\xi)
=\int_0^1\omega
\left(\xi(t),-\frac{d\gamma(t)}{dt}+sX_H(\gamma(t))
\right)dt
\]
for $\xi\in T_{\bgamma}\Omega$.
Hence $\bgamma$ is a critical point of $F_s$ if and only if
$\gamma$ is a time-1 trajectory of $X_{sH}$
which starts and ends on $\iota(L)$.
We define $c_s$ to be the set of the critical points of $F_s$, i.e.
\[
c_s:=\left\{\bgamma:=(\gamma,\bar{\gamma})\in\Omega:
\frac{d\gamma(t)}{dt}=X_{sH}(\gamma(t))\right\}.
\]
Note that, let $\gamma(t):=\varphi^{sH}_t(\delta(t))$, 
then $(\gamma,\bar{\gamma})\in c_s$ if and only if
\[
\delta(t)\equiv p\in\iota(L)\cap (\varphi^{sH}_1)^{-1}(\iota(L)).
\]
Thus $\bgamma\in c_s$ gives an intersection point of $\iota(L)$ and 
$(\varphi^{sH}_1)^{-1}(\iota(L))$.

Let $\{J_t\}_{t\in[0,1]}$ be a time-dependent tame almost complex structure on $M$.
We define a Riemannian metric $G$ on $\Omega$ by
\[
G(\xi_1,\xi_2):=\int_0^1\omega(\xi_1(t),J_t\xi_2(t))dt
\]
for $\xi_1, \xi_2\in T_{\bgamma}\Omega$.
Then the gradient vector of $F_s$ with respect to $G$ is given~by
\[
(\nabla F_s)_{\bgamma}
=J_t(\gamma(t))\left(\frac{d\gamma(t)}{dt}-sX_H(\gamma(t))\right).
\]

Note that $\Omega$ and $F_s$ are essentially the same as used 
in Chekanov \cite{ch2} and Oh \cite{oh1}
but we modify them for exact Lagrangian immersions.

\section{\bf Floer homology for exact Lagrangian immersions}

We introduce a variant of Floer homology, inspired by Chekanov \cite{ch1} and \cite{ch2},
for exact Lagrangian immersions.
In this section we do {\it not} use $\|H\|$.

Let $(M,\omega)$ be a closed symplectic manifold or a non-compact symplectic manifold 
with convex end,
and $\iota:L\to M$ an exact Lagrangian immersion from a closed manifold $L$.
For generic $H$, there exists an open dense subset $T\subset[0,1]$ such that,
for $s\in T$, $(\varphi^{sH}_1\circ\iota)_*T_{\bar{\gamma}(0)}L$
and $\iota_*T_{\bar{\gamma}(1)}L$ intersect transversely in $T_{\gamma(1)}M$
for $(\gamma,\bar{\gamma})\in c_s$;
we always assume that $H$ is generic and take such $T\subset[0,1]$.
Note that $c_s$ is a finite set for $s\in T$.

Let $s\in T$. 
We define $\mathcal{M}_s(\bgamma,\bdelta)$ 
for $\bgamma:=(\gamma,\bar{\gamma}), \bdelta:=(\delta,\bar{\delta})\in c_s$
to be the set of 
the {\it (descending) gradient trajectories} $(u,\bar{u})$ of $F_s$ from $\bgamma$ to $\bdelta$,
i.e. the pairs of smooth maps $u:\mathbb{R}\times[0,1]\to M$
and $\bar{u}:\mathbb{R}\times\{0,1\}\to L$ such that: 
\begin{itemize}
\item $u(\tau,0)=\iota\circ\bar{u}(\tau,0)$ and $u(\tau,1)=\iota\circ\bar{u}(\tau,1)$,
\item $\lim_{\tau\to -\infty}u(\tau,t)=\gamma(t)$ 
and $\lim_{\tau\to-\infty}\bar{u}(\tau,i)=\bar{\gamma}(i)$ for $i=0,1$, 
\item $\lim_{\tau\to\infty}u(\tau,t)=\delta(t)$,
and $\lim_{\tau\to\infty}\bar{u}(\tau,i)=\bar{\delta}(i)$ for $i=0,1$, 
\item $u$ is a solution of the perturbed Cauchy--Riemann equation:
\[
\frac{\partial u(\tau,t)}{\partial \tau}
+J_t(u(\tau,t))\left(\frac{\partial u(\tau,t)}{\partial t}-sX_H(u(\tau,t))\right)=0.
\]
\end{itemize}
Note that $\mathbb{R}$ acts on $\mathcal{M}_s(\bgamma,\bdelta)$ 
by the translations of $\tau$,
and let $\hat{\mathcal{M}}_s(\bgamma,\bdelta)$ denote the quotient.
Since the boundary value $\iota\circ\bar{u}$ does not switch 
sheets at non-injective points of the immersion,
we can use the usual local theory of the perturbed Cauchy--Riemann equation.
Hence we have the following theorem (\cite{ak}, \cite{fl1}, \cite{fl2} and \cite{oh1}):

\begin{thm}
For generic $\{J_t\}_{t\in[0,1]}$,
$\hat{\mathcal{M}}_s(\bgamma,\bdelta)$ is a finite dimensional smooth manifold.
\end{thm}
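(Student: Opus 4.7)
The plan is to follow the standard Floer-theoretic transversality argument (Floer, Oh, Salamon--Zehnder), adapted to the immersed setting in the manner indicated by the author's remark that $\bar{u}$ parameterises the boundary in $L$ itself, so the non-injective points of $\iota$ cause no trouble locally.

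First I would set up the analytic framework. Fix $p>2$, an exponent $\epsilon>0$, and form a Banach manifold $\mathcal{B}_s(\bgamma,\bdelta)$ of pairs $(u,\bar{u})$ with $u:\mathbb{R}\times[0,1]\to M$ and $\bar{u}:\mathbb{R}\times\{0,1\}\to L$, satisfying $u(\tau,i)=\iota\circ\bar{u}(\tau,i)$, converging exponentially to the asymptotic data $\bgamma,\bdelta$ in $W^{1,p}_\epsilon$; here I parameterise the boundary using $\bar{u}\in L$, which is crucial near non-injective points of $\iota$. The tangent space at $(u,\bar{u})$ consists of sections $\xi$ of $u^*TM$ together with $\bar\xi$ in $\bar{u}^*TL$ compatible at the boundary via $\iota_*$. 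The perturbed Cauchy--Riemann equation becomes a smooth section $\mathcal{F}_s$ of a Banach bundle over $\mathcal{B}_s(\bgamma,\bdelta)$, and $\mathcal{M}_s(\bgamma,\bdelta)=\mathcal{F}_s^{-1}(0)$.

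Next I would study the linearisation $D\mathcal{F}_s$, which is a Cauchy--Riemann type operator on a strip with Lagrangian boundary conditions coming from $\iota_*T_{\bar{u}(\tau,i)}L$. Transverse non-degeneracy of the asymptotic intersections (guaranteed for $s\in T$) implies $D\mathcal{F}_s$ is Fredholm; the index is computed by the spectral flow / Maslov index of the Lagrangian boundary conditions and depends only on $\bgamma,\bdelta$. This part is entirely local on the strip, and since $\bar{u}$ is a map into $L$, it proceeds exactly as in the embedded Lagrangian case cited in \cite{ak}, \cite{fl1}, \cite{fl2}, \cite{oh1}.

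Then comes the generic surjectivity step. Enlarge the problem by letting $\{J_t\}$ vary in a separable Banach space of tame almost complex structures (Floer's $C^\epsilon$-space) and consider the universal section $\widetilde{\mathcal{F}}$. The linearisation $D\widetilde{\mathcal{F}}$ is surjective at each zero: at interior points where $\partial_\tau u\neq 0$ one has a somewhere-injective point by Floer's unique continuation / Aronszajn, and a variation of $J_t$ supported in a small neighbourhood of such a point suffices to hit any given cokernel element. The Sard--Smale theorem applied to the projection to $J$-space yields a Baire-generic set of $\{J_t\}$ for which $D\mathcal{F}_s$ is surjective, hence $\mathcal{M}_s(\bgamma,\bdelta)$ is a smooth manifold of the expected dimension by the implicit function theorem, and $\hat{\mathcal{M}}_s(\bgamma,\bdelta)$ is obtained as the quotient by the free $\mathbb{R}$-action on non-constant trajectories.

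The main obstacle is the universal surjectivity/somewhere-injective-point step. One has to be sure that the argument runs even near non-injective points of $\iota$: a gradient trajectory could have $u(\tau,0)$ and $u(\tau,1)$ coincide in $M$ while $\bar{u}(\tau,0)\neq\bar{u}(\tau,1)$. This is not an obstruction because the $J$-perturbation is localised at an interior somewhere-injective point, away from the boundary; the boundary branching is handled once and for all by the choice to parameterise via $\bar{u}\in L$ rather than by images in $M$.
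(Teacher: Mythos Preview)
Your proposal is correct and is exactly the approach the paper intends: the paper does not give a proof of this theorem at all, but simply remarks that because the boundary lift $\bar{u}$ lives in $L$ and does not switch sheets at non-injective points, the usual local theory of the perturbed Cauchy--Riemann equation applies, and then cites \cite{ak}, \cite{fl1}, \cite{fl2}, \cite{oh1}. What you have written is a faithful unpacking of that citation.
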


Let $\hat{\mathcal{M}}_s^k(\bgamma,\bdelta)$ denote
the $k$-dimensional component of 
$\hat{\mathcal{M}}_s(\bgamma,\bdelta)$.

Following \cite{oh1} and \cite{sa1},
we define the energy $E(u)$ of 
$(u,\bar{u})\in\hat{\mathcal{M}}_s(\bgamma,\bdelta)$~by
\begin{eqnarray*}
E(u)&:=&\int_{-\infty}^{\infty}\int_0^1
\left|\frac{\partial u(\tau,t)}{\partial \tau}\right|_{J_t}^2 dtd\tau
\\
&=&
\int_{-\infty}^{\infty}\int_0^1
\omega\left(
\frac{\partial u(\tau,t)}{\partial \tau},J_t(u(\tau,t))\frac{\partial u(\tau,t)}{\partial \tau}
\right)
dtd\tau.
\end{eqnarray*}

\begin{lem}
For $(u,\bar{u})\in\hat{\mathcal{M}}_s(\bgamma,\bdelta)$, 
\[
E(u)=F_s(\bgamma)-F_s(\bdelta).
\]
\end{lem}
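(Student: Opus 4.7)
The plan is a standard Floer-theoretic energy identity: first use $u$ itself to compare the cappings of $\bgamma$ and $\bdelta$ used in the definition of $F_s$, producing an $\omega$-area term over the strip and a Hamiltonian correction term; then exploit the perturbed Cauchy--Riemann equation to recombine those two contributions into $E(u)$.

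First I would fix an admissible capping $(u_\bgamma,\bar{u}_\bgamma)$ used in computing $F_s(\bgamma)$. Concatenating $u_\bgamma$ with the trajectory $u$ (after composing the $\tau$-direction with a diffeomorphism of $\mathbb{R}$ onto an interval) yields an admissible capping for $\bdelta$. Since $\iota$ is exact, $F_s(\bdelta)$ is independent of the capping, so
\[
F_s(\bgamma)-F_s(\bdelta)=\int_{\mathbb{R}\times[0,1]}u^*\omega+s\int_0^1\bigl[H(t,\delta(t))-H(t,\gamma(t))\bigr]dt.
\]
It therefore suffices to evaluate $\int u^*\omega$.

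Next I would solve the perturbed Cauchy--Riemann equation for $\partial u/\partial t$, obtaining $\partial_t u=sX_H(u)+J_t\partial_\tau u$. Substituting into the integrand,
\[
\omega(\partial_\tau u,\partial_t u)=s\,\omega(\partial_\tau u,X_H)+\omega(\partial_\tau u,J_t\partial_\tau u)=-s\,\partial_\tau\bigl(H(t,u(\tau,t))\bigr)+\left|\partial_\tau u\right|_{J_t}^2,
\]
using $dH=\omega(X_H,\cdot)$ (whence $\omega(\partial_\tau u,X_H)=-dH(\partial_\tau u)=-\partial_\tau H(t,u)$) and the tameness identity $\omega(\xi,J_t\xi)=\left|\xi\right|_{J_t}^2$. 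Integrating over $\mathbb{R}\times[0,1]$, the first term telescopes in $\tau$ to $-s\int_0^1[H(t,\delta(t))-H(t,\gamma(t))]dt$, which exactly cancels the Hamiltonian correction above, while the second term is $E(u)$ by definition. Combining gives $F_s(\bgamma)-F_s(\bdelta)=E(u)$.

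The computation is essentially algebraic once the capping comparison is set up, so I do not anticipate any real obstacle. The only analytic input is the $\tau\to\pm\infty$ telescoping, which requires that $H(t,u(\tau,t))$ converge pointwise (uniformly in $t$) to $H(t,\gamma(t))$ and $H(t,\delta(t))$; this is part of the standard removable-singularity and exponential-decay theory for finite-energy solutions of the perturbed Cauchy--Riemann equation at transverse asymptotes, and is already incorporated into the definition of $\hat{\mathcal{M}}_s(\bgamma,\bdelta)$.
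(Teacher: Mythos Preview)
Your proposal is correct and follows essentially the same route as the paper's proof: both use the perturbed Cauchy--Riemann equation to rewrite $\omega(\partial_\tau u,J_t\partial_\tau u)$ as $\omega(\partial_\tau u,\partial_t u)-s\,\omega(\partial_\tau u,X_H)$, identify the second term with $-s\,\partial_\tau H(t,u)$, telescope in $\tau$, and use the concatenated capping to relate $\int u^*\omega$ to the action difference. The only cosmetic difference is that the paper starts from $E(u)$ and arrives at $F_s(\bgamma)-F_s(\bdelta)$ in one chain of equalities, whereas you first isolate the capping comparison and then compute $\int u^*\omega$; the underlying computation is identical.
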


\begin{proof}
Since $u$ satisfies the perturbed Cauchy--Riemann equation,
\begin{eqnarray*}
E(u)&=&\int_{-\infty}^{\infty}\int_0^1
\omega\left(\frac{\partial u(\tau,t)}{\partial \tau},\frac{\partial u(\tau,t)}{\partial t}-sX_H(u(\tau,t))
\right)
\\
&=&
\int_{\mathbb{R}\times[0,1]}u^*\omega
+s\int_{-\infty}^{\infty}\int_0^1\frac{\partial H(t,u(\tau,t))}{\partial \tau}dtd\tau
\\
&=&
\int_{\mathbb{R}\times[0,1]}u^*\omega
+s\int_0^1H(t,\delta(t))dt-s\int_0^1H(t,\gamma(t))dt
\\
&=&
F_s(\bgamma)-F_s(\bdelta).
\end{eqnarray*}
\end{proof}

Suppose $\sigma>0$.
We take $0<\kappa<\sigma$, and choose an interval $[b_-,b_+)\subset \mathbb{R}$ such that
\[
b_-<0<b_+ 
\quad \mbox{and} \quad
b_+-b_-=\kappa.
\]
Then we define a function $f_s:c_s \to [b_-,b_+)$ by
\[
f_s(\bgamma)\equiv F_s(\bgamma) \mod \kappa
\]
for $\bgamma\in c_s$.
Following Chekanov \cite{ch1},
we call $(u,\bar{u})\in\hat{\mathcal{M}}_s(\bgamma,\bdelta)$ 
a {\it distinguished} gradient trajectory if 
\[
F_s(\bgamma)-F_s(\bdelta)=f_s(\bgamma)-f_s(\bdelta).
\]
We define $\hat{\mathcal{M}}_s^d(\bgamma,\bdelta)$
to be the set of the distinguished gradient trajectories in 
$\hat{\mathcal{M}}_s(\bgamma,\bdelta)$; 
in fact, since $\iota:L\to M$ is exact Lagrangian,
\[
\hat{\mathcal{M}}_s^d(\bgamma,\bdelta)
=
\begin{cases}
\hat{\mathcal{M}}_s(\bgamma,\bdelta) & \mbox{if } 
F_s(\bgamma)-F_s(\bdelta)=f_s(\bgamma)-f_s(\bdelta),
\\
\emptyset & \mbox{otherwise}.
\end{cases}
\]
Let $\hat{\mathcal{M}}_s^{d,k}(\bgamma,\bdelta)$ denote
the $k$-dimensional component of 
$\hat{\mathcal{M}}_s^d(\bgamma,\bdelta)$.

From Lemma 3.2, we have the following lemma:
\begin{lem}
For $(u,\bar{u})\in \hat{\mathcal{M}}_s^d(\bgamma,\bdelta)$,
\[
E(u)<\kappa.
\]
\end{lem}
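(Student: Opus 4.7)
The plan is essentially a one-line unpacking of definitions, so I would organize it as follows.

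First, I would invoke Lemma 3.2 to rewrite the energy as a difference of $F_s$-values:
\[
E(u)=F_s(\bgamma)-F_s(\bdelta).
\]
Then I would apply the defining property of a distinguished gradient trajectory, namely
\[
F_s(\bgamma)-F_s(\bdelta)=f_s(\bgamma)-f_s(\bdelta),
\]
which reduces the problem to estimating the right-hand side.

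Next, I would use the construction of $f_s$: by definition it takes values in the half-open interval $[b_-,b_+)$ of length $\kappa$. Hence
\[
f_s(\bgamma)-f_s(\bdelta)<b_+-b_-=\kappa,
\]
where the inequality is strict because $f_s(\bgamma)<b_+$ while $f_s(\bdelta)\geq b_-$. (Non-negativity of the difference is automatic from $E(u)\geq 0$, but is not needed for the stated bound.) Combining these three observations yields $E(u)<\kappa$.

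There is essentially no obstacle: the lemma is a bookkeeping statement that records why the choice of the window $[b_-,b_+)$ of length $\kappa<\sigma$ is useful, namely that it forces every distinguished trajectory to have energy strictly below $\kappa$. The real work has already been done in Lemma 3.2 (identifying energy with the $F_s$-drop) and in the definition of $f_s$ (reducing the $F_s$-drop mod $\kappa$ to its representative in $[b_-,b_+)$). The point of the lemma is to feed this uniform energy bound into later compactness arguments for moduli spaces of distinguished trajectories, where the condition $\kappa<\sigma$ will prevent bubbling off of the problematic punctured discs that define $\sigma$.
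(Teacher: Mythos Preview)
Your proof is correct and is exactly the argument the paper has in mind: the paper itself does not spell out any details beyond ``From Lemma 3.2, we have the following lemma,'' and your unpacking via $E(u)=F_s(\bgamma)-F_s(\bdelta)=f_s(\bgamma)-f_s(\bdelta)<b_+-b_-=\kappa$ is precisely the intended one-line computation.
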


Assume that the non-injective points of $\iota:L\to M$ are transverse.

\begin{pro}
Let $s_i\to s_{\infty}\in \{0\}\cup T$, 
and $(u_i,\bar{u}_i)\in\hat{\mathcal{M}}_{s_i}(\bgamma_i,\bdelta_i)$
be a sequence of gradient trajectories with $E(u_i)<\kappa$.
Then $\{(u_i,\bar{u}_i)\}$ has a subsequence which
converges to a broken gradient trajectory without bubble tree
in the sense of Floer--Gromov convergence.
\end{pro}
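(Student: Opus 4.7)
The plan is to run the standard Floer--Gromov compactness machinery for the perturbed Cauchy--Riemann equation on strips with Lagrangian boundary, and then use the exactness of $\iota$ together with the energy bound $E(u_i)<\kappa<\sigma$ to preclude any bubbling in the limit.

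First I would obtain a uniform $C^0$ bound on $\{u_i\}$: when $M$ is closed this is automatic, and in the convex-end case the standard maximum principle for $J_t$-holomorphic curves keeps the images of the $u_i$ inside a fixed compact subset of $M$. Combined with the a priori energy bound $E(u_i)<\kappa$ and $s_i\to s_\infty$, elliptic regularity yields $C^\infty_{\mathrm{loc}}$ convergence of a subsequence on $(\mathbb{R}\times[0,1])\setminus S$ for some finite bubbling set $S$, together with the usual breaking of the strip into finitely many gradient trajectories of $F_{s_\infty}$ joining critical points of $F_{s_\infty}$ (finite when $s_\infty\in T$, and reducing to constant paths in $\iota(L)\cap\iota(L)$ when $s_\infty=0$).

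The heart of the matter is excluding bubbles. A rescaling at a point of $S$ produces either a non-constant $J_{t_0}$-holomorphic sphere in $M$ or a non-constant $J_{t_0}$-holomorphic disc $v\colon D^2\to M$ with $v(\partial D^2)\subset \iota(L)$. Transversality of the non-injective points of $\iota$ ensures that the two local branches of $\iota(L)$ at each self-intersection are graphical, so $v|_{\partial D^2}$ lifts uniquely to a continuous $\bar v\colon[0,1]\to L$ with $\iota\circ\bar v=v|_{\partial D^2}$. If $\bar v(0)=\bar v(1)$, then $v$ represents a genuine disc class with boundary in $L$, whose symplectic area vanishes by exactness of $\iota$, forcing $v$ to be constant, a contradiction. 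If $\bar v(0)\neq\bar v(1)$, then $(v,\bar v)\in K(\iota)$ and thus $\int_{D^2}v^*\omega\geq\sigma>\kappa$; but by the standard energy quantization the area of any bubble is bounded by $\liminf_i E(u_i)\leq\kappa$, again a contradiction. Sphere bubbles are excluded by the codimension argument already used in \cite{ak}: for generic $\{J_t\}$ they appear only in moduli components of codimension at least two, hence not in the low-dimensional components relevant to our Floer complex.

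The main obstacle is the energy bookkeeping at a bubble point sitting over a self-intersection of $\iota$: one must check both that no symplectic area is lost to a ``ghost'' component and that the boundary lift of the extracted disc really defines an element of $K(\iota)$ whenever it jumps sheets. The transversality assumption on the non-injective points of $\iota$ enters precisely here, playing the role of the embeddedness hypothesis in Chekanov \cite{ch1} and Oh \cite{oh1}; once this is in place, the argument reduces to the embedded Lagrangian case and the proposition follows.
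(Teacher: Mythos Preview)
Your treatment of disc bubbles matches the paper's, but the exclusion of sphere bubbles via a genericity/codimension argument is a genuine gap. Proposition~3.4 is stated for \emph{arbitrary} sequences with $E(u_i)<\kappa$, with no restriction on the dimension of the moduli component containing them, so one cannot appeal to codimension counts at all. Even in the later applications to $0$- and $1$-dimensional moduli spaces, the paper imposes no semi-positivity or monotonicity hypothesis on $(M,\omega)$, so sphere bubbles are not a~priori of codimension $\geq 2$ for generic $\{J_t\}$.

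The correct exclusion, which the paper uses, is again exactness. A pseudoholomorphic sphere $v:S^2\to M$ may be homotoped so that some point maps to $\iota(x_0)$, and then regarded as a disc with constant boundary at $\iota(x_0)$ lifted by $\bar v\equiv x_0$; the exactness hypothesis on $\iota$ forces $\int_{S^2}v^*\omega=0$, hence $v$ is constant. In other words, the paper's definition of exact Lagrangian immersion already implies $\omega|_{\pi_2(M)}=0$, and this is what the sentence ``since our Lagrangian immersion is exact, the bubbles of (i) and (ii) can not occur'' is invoking. Once spheres are handled this way, the rest of your outline --- uniform $C^0$ and energy bounds, Floer--Gromov compactness, and the dichotomy for boundary discs between zero area (type~(ii), by exactness) and area $\geq\sigma>\kappa$ (type~(iii)) --- agrees with the paper's proof.
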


\begin{proof}
Since $L$ is compact, taking a subsequence if necessary,
$\bgamma_i$ and $\bdelta_i$ converge to
$\bgamma\in c_{s_{\infty}}$ and $\bdelta\in c_{s_{\infty}}$, respectively.
Then, by Lemma 3.2, $E(u_i)$ is uniformly bounded, 
and the Floer--Gromov compactness theorem 
(\cite{fl1}, \cite{fl2} and \cite{sa1}) implies that $\{(u_i,\bar{u}_i)\}$ has a subsequence 
which converges to a broken gradient trajectory 
$((v_1,\bar{v}_1),\ldots,(v_N,\bar{v}_N)) 
\in\hat{\mathcal{M}}_{s_{\infty}}(\bgamma,\btheta_1)
\times\cdots\times\hat{\mathcal{M}}_{s_{\infty}}(\btheta_{N-1},\bdelta)$
with bubble trees;
the {\it tail} components of the bubble trees~are
\begin{itemize}
\item[(i)] a pseudoholomorphic sphere $v:S^2\to M$, 
\item[(ii)] a pseudoholomorphic disc
$v:D^2\to M$ with $\bar{v}:\partial D^2\to L$ such that $v|_{\partial D^2}=\iota\circ \bar{v}$,
\item[(iii)] a pseudoholomorphic disc $v:D^2\to M$ of $(v,\bar{v})\in K(\iota)$.
\end{itemize}
But, since our Lagrangian immersion is exact, the bubbles of (i) and (ii) can not occur.
Moreover, since the symplectic area of the bubble trees 
is less than or equal to $\limsup E(u_i)\leq \kappa \ (<\sigma)$ (\cite{sa1}),
the bubbles of (iii) can not occur.
Hence there is no bubble tree, 
and the subsequence converges to the broken gradient trajectory.
\end{proof}

To define our Floer homology, we use the following compactness theorems:

\begin{thm} 
$\hat{\mathcal{M}}_s^{d,0}(\bgamma,\bdelta)$ is compact.
\end{thm}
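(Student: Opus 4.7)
The plan is to combine Proposition 3.4 with an index/dimension count that forbids breakings in the zero-dimensional stratum.

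First I would take a sequence $(u_i,\bar u_i) \in \hat{\mathcal{M}}_s^{d,0}(\bgamma,\bdelta)$. Lemma 3.3 supplies the uniform energy bound $E(u_i)<\kappa$, so Proposition 3.4 applied with the constant sequence $s_i \equiv s$ extracts a subsequence that converges, in the Floer--Gromov sense, to a broken gradient trajectory
\[
((v_1,\bar v_1),\ldots,(v_N,\bar v_N)) \in \hat{\mathcal{M}}_s(\bgamma,\btheta_1)\times\cdots\times \hat{\mathcal{M}}_s(\btheta_{N-1},\bdelta)
\]
with \emph{no} bubble tree attached (Proposition 3.4 already eliminates every bubble type via exactness of $\iota$ together with the bound $\kappa<\sigma$).

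The next step is to exclude $N\ge 2$ by an index count. Each factor $\hat{\mathcal{M}}_s(\btheta_{j-1},\btheta_j)$ is smooth by Theorem 3.1, and being non-empty it has non-negative dimension. On the other hand, additivity of the Fredholm / Maslov-type index underlying these moduli spaces, combined with the single $\mathbb{R}$-quotient per piece, yields
\[
\sum_{j=1}^N \dim\hat{\mathcal{M}}_s(\btheta_{j-1},\btheta_j)=\dim\hat{\mathcal{M}}_s^0(\bgamma,\bdelta)-(N-1)=-(N-1),
\]
which is strictly negative for $N\ge 2$. This contradicts non-negativity of each summand, so $N=1$, and the subsequence limits to an honest unbroken trajectory $(v,\bar v)\in\hat{\mathcal{M}}_s(\bgamma,\bdelta)$.

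Finally, the distinguished condition $F_s(\bgamma)-F_s(\bdelta)=f_s(\bgamma)-f_s(\bdelta)$ depends only on the endpoints, so the limit automatically lies in $\hat{\mathcal{M}}_s^{d,0}(\bgamma,\bdelta)$. The only delicate step in the argument --- ruling out bubbling --- has already been absorbed into Proposition 3.4; what remains is the essentially formal observation that a zero-dimensional, transverse stratum is closed under Floer--Gromov convergence, since any genuine splitting would cost at least one dimension.
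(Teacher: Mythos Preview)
Your proof is correct and follows essentially the same route as the paper's: invoke Lemma~3.3 for the energy bound, apply Proposition~3.4 to extract a bubble-free broken limit, rule out $N\ge 2$ by the virtual dimension count, and observe that the distinguished condition is determined by the endpoints. The only differences are cosmetic---the paper phrases it by contradiction and simply writes ``because of the virtual dimension counting'' where you spell out the index additivity formula.
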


\begin{proof}
Suppose on the contrary that $\hat{\mathcal{M}}_s^{d,0}(\bgamma,\bdelta)$ is not compact.
Then there exists a sequence 
$\{(u_i,\bar{u}_i)\}\subset \hat{\mathcal{M}}_s^{d,0}(\bgamma,\bdelta)$ 
such that any subsequence does not converge in 
$\hat{\mathcal{M}}_s^{d,0}(\bgamma,\bdelta)$.
On the other hand,
from Lemma 3.3 and Proposition 3.4,
taking a subsequence if necessary,
$\{(u_i,\bar{u}_i)\}$ converges to 
a broken gradient trajectory
$((v_1,\bar{v}_1),\ldots,(v_N,\bar{v}_N))$ without bubble tree.
In this case, $N$ turns out to be 1 
and $(v_1,\bar{v}_1)\in\hat{\mathcal{M}}_s^0(\bgamma,\bdelta)$
for generic $\{J_t\}_{t\in[0,1]}$
because of the virtual dimension counting.
Since the subsequence preserves the condition 
$F_s(\bgamma)-F_s(\bdelta)=f_s(\bgamma)-f_s(\bdelta)$,
the limit $(v_1,\bar{v}_1)$ is in $\hat{\mathcal{M}}_s^{d,0}(\bgamma,\bdelta)$,
which contradicts that any subsequence does not converge in 
$\hat{\mathcal{M}}_s^{d,0}(\bgamma,\bdelta)$. 
Thus $\hat{\mathcal{M}}_s^{d,0}(\bgamma,\bdelta)$ is compact.
\end{proof}

\begin{thm} 
$\hat{\mathcal{M}}_s^{d,1}(\bgamma,\bdelta)$ 
has a suitable compactification whose boundary is given by
\[
\bigcup_{\btheta\in c_s}
\hat{\mathcal{M}}_s^{d,0}(\bgamma,\btheta)
\times \hat{\mathcal{M}}_s^{d,0}(\btheta,\bdelta).
\]
\end{thm}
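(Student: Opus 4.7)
The plan is to follow the standard Floer-theoretic compactification of a one-dimensional moduli space, inserting one combinatorial check to guarantee that broken limits remain in the \emph{distinguished} moduli space. Two directions must be established: every sequence in $\hat{\mathcal{M}}_s^{d,1}(\bgamma,\bdelta)$ either converges in the interior or breaks into a pair of distinguished $0$-dimensional trajectories; and, conversely, each such broken pair arises as the limit of a one-parameter family via gluing.

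For the limit direction, take $(u_i,\bar{u}_i)\in\hat{\mathcal{M}}_s^{d,1}(\bgamma,\bdelta)$. By Lemmas 3.2 and 3.3, $E(u_i)=F_s(\bgamma)-F_s(\bdelta)=f_s(\bgamma)-f_s(\bdelta)<\kappa$, so Proposition 3.4 extracts a subsequential Floer--Gromov limit without bubble tree, of the form $((v_1,\bar{v}_1),\ldots,(v_N,\bar{v}_N))$ through intermediate critical points $\btheta_1,\ldots,\btheta_{N-1}$. The virtual dimension of such a broken configuration before quotient is $\sum_j\dim\hat{\mathcal{M}}_s(\btheta_{j-1},\btheta_j)-(N-1)$ with the convention $\btheta_0=\bgamma,\btheta_N=\bdelta$, which must equal $1$; since for generic $\{J_t\}$ each piece has non-negative dimension, only $N=1$ (interior convergence) or $N=2$ with each piece in $\hat{\mathcal{M}}_s^0$ is possible.

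The delicate step is promoting the broken pair to the distinguished moduli spaces. Set $a:=F_s(\bgamma)-F_s(\btheta_1)$ and $b:=F_s(\btheta_1)-F_s(\bdelta)$; by Lemma 3.2 both $a,b>0$, while $a+b=F_s(\bgamma)-F_s(\bdelta)=f_s(\bgamma)-f_s(\bdelta)<\kappa$. Since $f_s$ takes values in $[b_-,b_+)$, each $f$-difference lies in $(-\kappa,\kappa)$ and is congruent mod $\kappa$ to the corresponding $F$-difference; thus $f_s(\bgamma)-f_s(\btheta_1)\in\{a,a-\kappa\}$ and $f_s(\btheta_1)-f_s(\bdelta)\in\{b,b-\kappa\}$. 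These two differences must telescope to $f_s(\bgamma)-f_s(\bdelta)=a+b$, and since $0<a+b<\kappa$ the only consistent choice is $(a,b)$. Hence $F_s(\bgamma)-F_s(\btheta_1)=f_s(\bgamma)-f_s(\btheta_1)$ and similarly for the second piece, so both $(v_1,\bar{v}_1)$ and $(v_2,\bar{v}_2)$ are distinguished.

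The converse direction is supplied by the standard Floer gluing theorem (cf.\ \cite{fl1}, \cite{fl2}, \cite{oh1}): each pair $(w_1,w_2)\in\hat{\mathcal{M}}_s^{d,0}(\bgamma,\btheta)\times\hat{\mathcal{M}}_s^{d,0}(\btheta,\bdelta)$ is the Floer--Gromov limit of a unique half-open one-parameter family in $\hat{\mathcal{M}}_s^1(\bgamma,\bdelta)$; since every member of this family has energy $F_s(\bgamma)-F_s(\bdelta)=a+b=f_s(\bgamma)-f_s(\bdelta)$, the family lies in $\hat{\mathcal{M}}_s^{d,1}(\bgamma,\bdelta)$. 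These gluing parametrisations supply the collar charts near the boundary and complete the compactification. I expect the main obstacle to be the mod-$\kappa$ bookkeeping that promotes an analytic Floer--Gromov limit to a distinguished broken trajectory; the analytic inputs (Proposition 3.4 and standard gluing) are already in place.
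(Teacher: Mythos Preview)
Your proposal is correct and follows essentially the same route as the paper: Floer--Gromov compactness without bubbling via Proposition~3.4, virtual dimension counting to force $N\le 2$, a mod-$\kappa$ bookkeeping step to show the two broken pieces are distinguished, and standard gluing for the converse. The paper phrases the combinatorial step by introducing an integer $m$ with $F_s(\bgamma)-F_s(\btheta)=f_s(\bgamma)-f_s(\btheta)+m\kappa$ and $F_s(\btheta)-F_s(\bdelta)=f_s(\btheta)-f_s(\bdelta)-m\kappa$, then squeezing $m=0$ from the positivity of the energies and the bound $|f_s(\cdot)-f_s(\cdot)|<\kappa$; your enumeration of the two candidate values for each $f$-difference and the telescoping constraint is the same argument in different clothing. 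One small remark: your dimension formula ``$\sum_j\dim\hat{\mathcal{M}}_s(\btheta_{j-1},\btheta_j)-(N-1)=1$'' is misstated (with $\hat{\mathcal{M}}$ already the quotient, index additivity gives $\sum_j k_j=2-N$), though your conclusion $N\in\{1,2\}$ with $0$-dimensional pieces when $N=2$ is the correct one.
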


\begin{proof}
The proof is based on the standard gluing-compactness argument in \cite{fl1}, \cite{fl2} and \cite{sa1}.
For a pair $((u_1,\bar{u}_1),(u_2,\bar{u}_2))
\in \hat{\mathcal{M}}_s^{d,0}(\bgamma,\btheta)\times\hat{\mathcal{M}}_s^{d,0}(\btheta,\bdelta)$,
the gluing procedure gives a unique connected component of 
$\hat{\mathcal{M}}_s^1(\bgamma,\bdelta)$, say $\mathcal{M}$,
such that the pair $((u_1,\bar{u}_1),(u_2,\bar{u}_2))$ is a compactifying point of $\mathcal{M}$.
Since 
\begin{eqnarray*}
F_s(\bgamma)-F_s(\bdelta)&=&F_s(\bgamma)-F_s(\btheta)+F_s(\btheta)-F_s(\bdelta)
\\
&=&f_s(\bgamma)-f_s(\btheta)+f_s(\btheta)-f_s(\bdelta)
\\
&=&f_s(\bgamma)-f_s(\bdelta),
\end{eqnarray*}
$\mathcal{M}$ is contained in $\hat{\mathcal{M}}_s^{d,1}(\bgamma,\bdelta)$.

On the other hand, 
from Lemma 3.3 and Proposition 3.4,
let $\{(u_i,\bar{u}_i)\}\subset \hat{\mathcal{M}}_s^{d,1}(\bgamma,\bdelta)$
be a sequence which converges 
to a broken gradient trajectory 
$((v_1,\bar{v}_1),\ldots,(v_N,\bar{v}_N))$ without bubble tree.
In this case, $N$ turns out to be 2 and
$((v_1,\bar{v}_1),(v_2,\bar{v}_2)) 
\in
\hat{\mathcal{M}}_s^0(\bgamma,\btheta)\times\hat{\mathcal{M}}_s^0(\btheta,\bdelta)$
for generic $\{J_t\}_{t\in [0,1]}$
because of the virtual dimension counting.
Since $F_s(\bgamma)-F_s(\bdelta)=f_s(\bgamma)-f_s(\bdelta)$, 
there exists $m\in\mathbb{Z}$ 
such that
\begin{eqnarray}
F_s(\bgamma)-F_s(\btheta)&=&f_s(\bgamma)-f_s(\btheta)+m\kappa,
\\
F_s(\btheta)-F_s(\bdelta)&=&f_s(\btheta)-f_s(\bdelta)-m\kappa.
\end{eqnarray}
From (1), since $0<E(u_1)=F_s(\bgamma)-F_s(\btheta)$ and 
$f_s(\bgamma)-f_s(\btheta)<\kappa$,
we obtain $0\leq m$;
and from (2), 
since $0<E(u_2)=F_s(\btheta)-F_s(\bdelta)$ and $f_s(\btheta)-f_s(\bdelta)<\kappa$,
we obtain $m\leq 0$.
Thus $m=0$, which implies $((v_1,\bar{v}_1),(v_2,\bar{v}_2))\in
\hat{\mathcal{M}}_s^{d,0}(\bgamma,\btheta)\times\hat{\mathcal{M}}_s^{d,0}(\btheta,\bdelta)$.
We obtain the compactification of $\hat{\mathcal{M}}_s^{d,1}(\bgamma,\bdelta)$.
\end{proof}

Analogous to the Floer homology of Lagrangian submanifolds (\cite{fl1}),
Theorem 3.4 allows us to define our Floer complex.
Let $C_s$ be the free $\mathbb{Z}_2$-module
\[
C_s:=\bigoplus_{\bgamma\in c_s}\mathbb{Z}_2\bgamma
\]
and we define a linear map $\partial_s: C_s\to C_s$ by
\[
\partial_s \bgamma:=\sum_{\bdelta\in c_s}\sharp\hat{\mathcal{M}}_s^{d,0}(\bgamma,\bdelta)\bdelta
\]
for $\bgamma\in c_s$.
Then Theorem 3.6 implies the following theorem (\cite{fl1}):

\begin{thm}
$\partial_s\circ\partial_s=0$.
\end{thm}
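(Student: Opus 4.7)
The plan is to use Theorem 3.6 together with the classical fact that a compact $1$-dimensional manifold has an even number of boundary points, exactly as in Floer's original argument for Lagrangian intersection homology.

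First, I would expand the composition. For $\bgamma \in c_s$,
\[
\partial_s \circ \partial_s \bgamma
= \sum_{\bdelta \in c_s}
\left(
\sum_{\btheta \in c_s}
\sharp \hat{\mathcal{M}}_s^{d,0}(\bgamma,\btheta) \cdot \sharp \hat{\mathcal{M}}_s^{d,0}(\btheta,\bdelta)
\right) \bdelta,
\]
with all counts taken in $\mathbb{Z}_2$. It thus suffices to show that, for every fixed $\bgamma, \bdelta \in c_s$, the inner sum vanishes modulo $2$.

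Next, I would apply Theorem 3.6 to $\hat{\mathcal{M}}_s^{d,1}(\bgamma,\bdelta)$. By that theorem, there is a compactification whose boundary is precisely
\[
\bigcup_{\btheta \in c_s}
\hat{\mathcal{M}}_s^{d,0}(\bgamma,\btheta) \times \hat{\mathcal{M}}_s^{d,0}(\btheta,\bdelta).
\]
Since $\hat{\mathcal{M}}_s^{d,1}(\bgamma,\bdelta)$ is a smooth $1$-dimensional manifold (by Theorem 3.1 applied to the $1$-dimensional component, combined with the fact from the remark following Lemma 3.3 that the distinguished condition merely selects a union of components of $\hat{\mathcal{M}}_s^{1}(\bgamma,\bdelta)$), its compactification is a compact $1$-manifold with boundary. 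The cardinality of the boundary of any such manifold is even, so
\[
\sum_{\btheta \in c_s}
\sharp \hat{\mathcal{M}}_s^{d,0}(\bgamma,\btheta) \cdot \sharp \hat{\mathcal{M}}_s^{d,0}(\btheta,\bdelta)
\equiv 0 \pmod{2},
\]
which yields $\partial_s \circ \partial_s \bgamma = 0$.

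The only nontrivial content is Theorem 3.6 itself, which has already been established — in particular, the ruling out of bubbling through Proposition 3.4 and the careful bookkeeping with the mod-$\kappa$ functional $f_s$ that confines limits of distinguished trajectories to concatenations of distinguished trajectories. Given Theorem 3.6, the remaining argument is essentially formal; the main thing to be careful about is that $\hat{\mathcal{M}}_s^{d,1}$ is indeed $1$-dimensional with no interior codimension-one strata other than those listed (no bubble-tree breaking, no sphere or disc bubbling, no trajectory breaking into non-distinguished pieces), all of which are guaranteed by the exactness of $\iota$ and the bound $\kappa < \sigma$.
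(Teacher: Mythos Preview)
Your proposal is correct and follows exactly the approach the paper indicates: the paper simply states that Theorem 3.6 implies $\partial_s\circ\partial_s=0$ and cites Floer's original argument, and you have spelled out precisely that standard boundary-counting argument for compact 1-manifolds. There is nothing to add.
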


Let $H(C_s,\partial_s)$ denote the homology of $(C_s,\partial_s)$,
which is our {\it distinguished} Floer homology for exact Lagrangian immersions.

Next, we prepare some notation.
Let $0_L$ be the zero section of $T^*L$, 
and we fix a diffeomorphism $i_L:L\to 0_L$.
Take a small tubular neighborhood $U$ of $0_L$ in $T^*L$
and an immersion $\pi:U\to M$ such that: 
\begin{itemize}
\item $\pi\circ i_L=\iota$,
\item $\pi^*\omega$ equals the canonical symplectic form on $T^*L$.
\end{itemize}
We define a smooth function $sH\circ\pi:[0,1]\times U\to \mathbb{R}$ 
by $(sH\circ\pi)(t,x):=sH(t,\pi(x))$.
Take $s$ to be small so that for any $(\gamma,\bar{\gamma})\in c_s$
the image of $\gamma$ is contained in $\pi(U)$.
Then we divide $c_s$ into the following two sets $a_s$ and~$b_s$:
\[
a_s:=\left\{(\gamma,\bar{\gamma})\in c_s:
\begin{array}{l}
\mbox{there exists }\alpha:[0,1]\to U\mbox{ such that }\alpha(0),\alpha(1)\in 0_L
\\
\mbox{and }
\gamma=\pi\circ\alpha,
\mbox{ and }
\alpha(i)=i_L(\bar{\gamma}(i))
\mbox{ for }i=0,1
\end{array}
\right\},
\]
and $b_s:=\left\{(\gamma,\bar{\gamma})\in c_s:(\gamma,\bar{\gamma})\notin a_s\right\}$.
Note that, for small $s$,
$\bar{\gamma}(1)$ and $\bar{\gamma}(0)$ are very close in $L$ 
for $(\gamma,\bar{\gamma})\in a_s$;
on the other hand, for $(\gamma,\bar{\gamma})\in b_s$,
there exists $(y,y')\in L\times L$ such that $y\neq y'$, $\iota(y)=\iota(y')$
and $(\bar{\gamma}(0),\bar{\gamma}(1))$ is very close to $(y,y')$ in $L\times L$. 

\begin{thm}
There exists $s_0$ such that, for $s<s_0$,
$\{(u,\bar{u})\in \hat{\mathcal{M}}_s(\bgamma,\bdelta):E(u)<\kappa\}=\emptyset$
for any $\bgamma\in a_s$ and $\bdelta\in b_s$.
\end{thm}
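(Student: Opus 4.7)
The plan is to argue by contradiction via Floer--Gromov compactness at the degenerate parameter $s=0$. Suppose no such $s_0$ exists. Then I can find sequences $s_i \to 0$ in $T$, critical points $\bgamma_i \in a_{s_i}$ and $\bdelta_i \in b_{s_i}$, and gradient trajectories $(u_i,\bar{u}_i) \in \hat{\mathcal{M}}_{s_i}(\bgamma_i, \bdelta_i)$ with $E(u_i) < \kappa$. Since $\kappa < \sigma$, Proposition 3.4 applies, so after extracting a subsequence the $(u_i,\bar{u}_i)$ converge to a broken gradient trajectory $((v_1,\bar{v}_1),\ldots,(v_N,\bar{v}_N))$ of $F_0$ without bubble tree, connecting some $\bgamma_\infty = \btheta_0$ to some $\bdelta_\infty = \btheta_N$ in $c_0$.

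The next step is to identify the limit types. At $s=0$ every critical point is a constant path, so each $\btheta_j$ is determined by a pair $(\bar{\theta}_j(0),\bar{\theta}_j(1)) \in L\times L$ with common image under $\iota$. Using the local chart $\pi:U \to M$ and the definitions of $a_s$ and $b_s$, the condition $\bgamma_i \in a_{s_i}$ forces both lift endpoints to collapse in $L$ as $s_i \to 0$, so $\bar{\gamma}_\infty(0) = \bar{\gamma}_\infty(1)$ (``diagonal''), whereas $\bdelta_i \in b_{s_i}$ keeps $(\bar{\delta}_i(0),\bar{\delta}_i(1))$ near a fixed pair $(y,y')$ with $y\neq y'$ and $\iota(y)=\iota(y')$, so $\bar{\delta}_\infty(0)\neq \bar{\delta}_\infty(1)$ (``off-diagonal''). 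Walking along the chain $\btheta_0,\ldots,\btheta_N$, the diagonal/off-diagonal type must change at some step $j_0$, so the segment $(v_{j_0},\bar{v}_{j_0})$ connects a diagonal constant to an off-diagonal one.

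At this point I would extract a disc in $K(\iota)$ that violates the definition of $\sigma$. Biholomorphically map $\mathbb{R}\times[0,1]$ to $D^2$ with $\tau = \pm\infty$ sent to two boundary points; the asymptotic convergence of $v_{j_0}$ to constants makes it extend continuously to $v:D^2\to M$. The boundary lift is continuous at the diagonal end but jumps between the two distinct preimages of a double point at the off-diagonal end. Cyclically reparametrizing $\partial D^2$ as $[0,1]$ so that the jump falls between $\theta=0$ and $\theta=1$ yields $(v,\bar{v})\in K(\iota)$. By Lemma 3.2 at $s=0$, $\int_{D^2}v^*\omega = E(v_{j_0})$, which is strictly positive (a constant strip would force $\btheta_{j_0-1}=\btheta_{j_0}$, contradicting the type change) and bounded above by $\sum_j E(v_j)=\lim E(u_i)\leq \kappa<\sigma$, contradicting the defining infimum of $\sigma$.

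The hard part will be justifying the jump structure of the limit boundary lift. The lifts $\bar{u}_i$ take values in $L$ while $u_i$ takes values in $M$, and as $u_i$ passes near non-injective points of $\iota$ one must rule out uncontrolled sheet-switching. Here the transversality hypothesis on the non-injective points of $\iota$ is essential: near each double point $\iota$ splits into two transverse local embeddings, so once $\bar{u}_i$ is specified at a single base point its lift is unique and stable under $C^0$-convergence away from the two isolated asymptotic points $\tau=\pm\infty$. This allows the Floer--Gromov limit's boundary trace to be identified with the continuous/jumping pattern described above and closes the argument.
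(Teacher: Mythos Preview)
Your proof is correct and follows essentially the same approach as the paper: both argue by contradiction, apply Proposition~3.4 to extract a bubble-free broken limit at $s=0$, and observe that the transition from a diagonal end $(x,x)$ to an off-diagonal end $(y,y')$ forces some broken component to compactify to a disc in $K(\iota)$ with positive area at most $\kappa<\sigma$. Your write-up is in fact more explicit than the paper's about how to construct the element of $K(\iota)$ from the strip and why the boundary lift has the claimed jump structure; the paper simply asserts that one broken component ``is an element of $K(\iota)$.''
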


\begin{proof}
Suppose on the contrary that there is no such $s_0$.
Then there exists a sequence $s_i\to 0$
such that 
$\{(u,\bar{u})\in\hat{\mathcal{M}}_{s_i}(\bgamma_i,\bdelta_i):E(u)<\kappa\}\neq\emptyset$
for some $\bgamma_i:=(\gamma_i,\bar{\gamma}_i)\in a_{s_i}$ and 
$\bdelta_i:=(\delta_i,\bar{\delta}_i)\in b_{s_i}$;
moreover, taking a subsequence if necessary, there exist
$(x,x), (y,y')\in L\times L$ such that:
\begin{itemize}
\item 
$(\bar{\gamma}_i(0),\bar{\gamma}_i(1))\to (x,x)$ and $\gamma_i(t)\to\iota(x)$,
\item 
$y\neq y'$ and $\iota(y)=\iota(y')$,
and $(\bar{\delta}_i(0),\bar{\delta}_i(1))\to (y,y')$ and $\delta_i(t)\to\iota(y)=\iota(y')$. 
\end{itemize}
Let $(u_i,\bar{u}_i)\in\hat{\mathcal{M}}_{s_i}(\bgamma_i,\bdelta_i)$ with $E(u_i)<\kappa$. 
From Proposition 3.4,
taking a subsequence if necessary, $(u_i,\bar{u}_i)$ converges to 
a broken {\it pseudoholomorphic strip} $((v_1,\bar{v}_1),\ldots,(v_N,\bar{v}_N))$
without bubble tree.
Since $(x,x)$ is an injective point and $(y,y')$ is a non-injective point, 
at least one of the broken components $(v_i,\bar{v}_i)$ is an element of $K(\iota)$.
But, since the symplectic area of the broken pseudoholomorphic strip
is less than or equal to
$\limsup E(u_i)\leq \kappa\ (<\sigma)$, there is no such $v_i$,
which contradicts to the existence of such $s_i\to 0$.
Thus there exists $s_0$ such that, for $s<s_0$, 
$\{(u,\bar{u})\in\hat{\mathcal{M}}_s(\bgamma,\bdelta):E(u)<\kappa\}=\emptyset$
for any $\bgamma\in a_s$ and $\bdelta\in b_s$.
\end{proof}

From Lemma 3.3, we obtain the following corollary:

\begin{cor}
There exists $s_0$ such that, for $s<s_0$,
$\hat{\mathcal{M}}_s^d(\bgamma,\bdelta)=\emptyset$
for any $\bgamma\in a_s$ and $\bdelta\in b_s$.
\end{cor}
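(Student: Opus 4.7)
The plan is to chain Lemma 3.3 into Theorem 3.8 directly. I take $s_0$ to be exactly the threshold produced by Theorem 3.8, and argue by contrapositive: any distinguished gradient trajectory from $\bgamma\in a_s$ to $\bdelta\in b_s$ would in particular be a gradient trajectory with energy less than $\kappa$, contradicting Theorem 3.8.

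More precisely, the first step is to fix $s_0$ so that, for all $s<s_0$, every element of $\hat{\mathcal{M}}_s(\bgamma,\bdelta)$ with $\bgamma\in a_s$, $\bdelta\in b_s$ has energy at least $\kappa$ (this is exactly the content of Theorem 3.8, rephrased by taking the complement of the empty set condition). Second, I fix $s<s_0$ and suppose, for contradiction, that $(u,\bar{u})\in \hat{\mathcal{M}}_s^d(\bgamma,\bdelta)$ for some $\bgamma\in a_s$ and $\bdelta\in b_s$. Third, I apply Lemma 3.3 to conclude $E(u)<\kappa$. Since $\hat{\mathcal{M}}_s^d(\bgamma,\bdelta)\subset \hat{\mathcal{M}}_s(\bgamma,\bdelta)$ by definition, this gives an element of $\hat{\mathcal{M}}_s(\bgamma,\bdelta)$ with $E(u)<\kappa$, contradicting the choice of $s_0$. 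Hence $\hat{\mathcal{M}}_s^d(\bgamma,\bdelta)=\emptyset$.

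There is no real obstacle here, since the corollary is a formal consequence of the two preceding results once the energy bound from Lemma 3.3 is observed to exclude trajectories in precisely the energy range covered by Theorem 3.8. The only thing to verify is that the $s_0$ supplied by Theorem 3.8 can be used without further shrinking, which is clear because Lemma 3.3 holds at every $s\in T$ for which $c_s$ is defined.
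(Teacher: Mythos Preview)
Your proof is correct and follows the same approach as the paper: the corollary is derived directly from Theorem 3.8 via the energy bound of Lemma 3.3. The paper in fact gives no further details beyond noting that Lemma 3.3 implies the corollary, so your argument is exactly the intended unpacking.
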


Similarly, we can prove the following theorem and corollary: 

\begin{thm}
There exists $s_0$ such that, for $s<s_0$,  
$\{(u,\bar{u})\in\hat{\mathcal{M}}_s(\bgamma,\bdelta):E(u)<\kappa\}=\emptyset$
for any $\bgamma\in b_s$ and $\bdelta\in a_s$.
\end{thm}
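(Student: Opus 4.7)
The plan is to mirror the proof of Theorem~3.8 with the roles of source and target critical points interchanged. Arguing by contradiction, suppose no such $s_0$ exists. I can then extract a sequence $s_i\to 0$, critical points $\bgamma_i:=(\gamma_i,\bar{\gamma}_i)\in b_{s_i}$ and $\bdelta_i:=(\delta_i,\bar{\delta}_i)\in a_{s_i}$, and trajectories $(u_i,\bar{u}_i)\in\hat{\mathcal{M}}_{s_i}(\bgamma_i,\bdelta_i)$ with $E(u_i)<\kappa$. By compactness of $L$ and the definitions of $b_{s_i}$ and $a_{s_i}$, after passing to a subsequence there exist $x\in L$ and $y\neq y'$ in $L$ with $\iota(y)=\iota(y')$ such that $(\bar{\gamma}_i(0),\bar{\gamma}_i(1))\to(y,y')$ and $\gamma_i(t)\to\iota(y)=\iota(y')$, while $(\bar{\delta}_i(0),\bar{\delta}_i(1))\to(x,x)$ and $\delta_i(t)\to\iota(x)$.

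Next I would apply Proposition~3.4 with $s_\infty=0\in\{0\}\cup T$: a further subsequence of $\{(u_i,\bar{u}_i)\}$ converges, without bubble tree, to a broken pseudoholomorphic strip $((v_1,\bar{v}_1),\ldots,(v_N,\bar{v}_N))$ (at $s=0$ the perturbed Cauchy--Riemann equation reduces to the genuine $J_t$-holomorphic strip equation). Its leftmost asymptote is the non-injective configuration $(y,y')$ with $y\neq y'$, and its rightmost asymptote is the diagonal configuration $(x,x)$. The concatenated boundary traces $\bar{v}_j(\cdot,0)$ and $\bar{v}_j(\cdot,1)$ on $L$ are continuous across the nodes and run from $y,y'$ to $x,x$ respectively, so at least one broken component must transition between a non-injective asymptote and either the diagonal or a different non-injective asymptote. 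Exactly as in the proof of Theorem~3.8, such a switching component closes up at its punctures into a punctured disc whose boundary lifts to a path in $L$ with distinct endpoints mapping to the same point of $\iota(L)$, i.e.\ an element of $K(\iota)$.

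The symplectic area of this element is strictly positive, since the switching component is non-constant, and bounded above by $\limsup E(u_i)\leq\kappa<\sigma$, contradicting the definition of $\sigma$. Hence the required $s_0$ exists. The main conceptual step is the identification of the switching component as an element of $K(\iota)$, but this is exactly the geometric bookkeeping already performed in the proof of Theorem~3.8, now applied with source and target reversed; the remainder of the argument is essentially formal.
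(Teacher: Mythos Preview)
Your argument is correct and is exactly the symmetric version of the proof of Theorem~3.8 that the paper intends; the paper gives no separate proof of Theorem~3.10 and merely states that it is proved ``similarly.'' Your added remark that the switching component closes up to an element of $K(\iota)$, with positive area bounded by $\limsup E(u_i)\le\kappa<\sigma$, is precisely the mechanism used in the proof of Theorem~3.8 with the roles of source and target interchanged.
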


\begin{cor}
There exists $s_0$ such that, for $s<s_0$,
$\hat{\mathcal{M}}_s^d(\bgamma,\bdelta)=\emptyset$
for any $\bgamma\in b_s$ and $\bdelta\in a_s$.
\end{cor}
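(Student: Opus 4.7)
The plan is to deduce Corollary 3.11 from Theorem 3.10 in exactly the same way Corollary 3.9 is deduced from Theorem 3.8. Take the $s_0$ produced by Theorem 3.10. For $s<s_0$ and any $\bgamma\in b_s$, $\bdelta\in a_s$, Theorem 3.10 asserts that no element $(u,\bar u)\in\hat{\mathcal{M}}_s(\bgamma,\bdelta)$ has $E(u)<\kappa$. On the other hand, Lemma 3.3 guarantees that every distinguished gradient trajectory satisfies $E(u)<\kappa$. Thus $\hat{\mathcal{M}}_s^d(\bgamma,\bdelta)$ sits inside the set that Theorem 3.10 just declared empty, so $\hat{\mathcal{M}}_s^d(\bgamma,\bdelta)=\emptyset$.

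Since Theorem 3.10 is only asserted (\emph{similarly, we can prove}), I would verify it by running the proof of Theorem 3.8 with the roles of $\bgamma$ and $\bdelta$ exchanged. Assume toward a contradiction that there is a sequence $s_i\to 0$ with $(u_i,\bar u_i)\in \hat{\mathcal{M}}_{s_i}(\bgamma_i,\bdelta_i)$, $E(u_i)<\kappa$, $\bgamma_i\in b_{s_i}$, $\bdelta_i\in a_{s_i}$. After subsequences, $(\bar\gamma_i(0),\bar\gamma_i(1))\to(y,y')$ with $y\neq y'$ and $\iota(y)=\iota(y')$, while $(\bar\delta_i(0),\bar\delta_i(1))\to(x,x)$ on the diagonal. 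Apply Proposition 3.4 to extract a Floer--Gromov limit consisting of a broken pseudoholomorphic strip $((v_1,\bar v_1),\dots,(v_N,\bar v_N))$ without bubble tree.

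Concatenating the $v_j$ into a single strip $\mathbb{R}\times[0,1]\to M$ and compactifying the two infinite ends to the constant values $\iota(y)$ and $\iota(x)$ produces a pseudoholomorphic disc whose boundary lift is a path in $L$ from $y'$ to $y$; this is precisely an element of $K(\iota)$. Because $\bgamma$ and $\bdelta$ lie in the disjoint subsets $b_s$ and $a_s$, they are distinct, so the intermediate critical points cannot all coincide and at least one $v_j$ is non-constant; hence the compactified disc has strictly positive symplectic area, at most $\limsup E(u_i)\leq\kappa<\sigma$. This contradicts the definition of $\sigma$, proving Theorem 3.10.

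The main obstacle is the last bookkeeping step: tracing the boundary lifts across the broken limit and recognizing the concatenation as a nontrivial element of $K(\iota)$ with positive area. Once that has been executed, the energy inequality and the definition of $\sigma$ close the argument, and Corollary 3.11 follows immediately from Theorem 3.10 via Lemma 3.3 as in the first paragraph.
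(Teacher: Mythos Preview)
Your deduction of Corollary 3.11 from Theorem 3.10 via Lemma 3.3 is exactly the paper's approach (the paper simply says ``similarly'' and invokes Lemma 3.3, just as for Corollary 3.9). Your supplementary verification of Theorem 3.10 is also correct, with one cosmetic difference from the paper's proof of Theorem 3.8: the paper observes that somewhere along the chain $\btheta_0=\bgamma_\infty,\btheta_1,\ldots,\btheta_N=\bdelta_\infty$ there must be a transition between an injective-type and a non-injective-type critical point, and the corresponding \emph{single} component $(v_j,\bar v_j)$ already compactifies to an element of $K(\iota)$; you instead concatenate all components. Both routes yield a disc in $K(\iota)$ with positive symplectic area $\leq\kappa<\sigma$. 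One small slip: the concatenated disc is only \emph{piecewise} pseudoholomorphic and merely continuous at the joins, so strictly it is not a smooth element of $K(\iota)$; this is harmless since exactness makes the area depend only on the relative homotopy class, so you may smooth without changing it (the paper's single-component version sidesteps this because removable singularities give a genuinely smooth disc).
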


Now we define the free $\mathbb{Z}_2$-modules
\[
A_s:=\bigoplus_{\bgamma\in a_s}\mathbb{Z}_2\bgamma
\quad \mbox{and} \quad 
B_s:=\bigoplus_{\bgamma\in b_s}\mathbb{Z}_2\bgamma.
\]
Note that $C_s=A_s\oplus B_s$.
From Corollary 3.9 and 3.11, the boundary operator $\partial_s$ has no cross term when $s<s_0$,
and we obtain the following corollary:

\begin{cor}
There exists $s_0$ such that, for $s<s_0$, 
$\partial_sA_s\subset A_s$ and $\partial_sB_s\subset B_s$.
\end{cor}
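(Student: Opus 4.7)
The plan is to read off the corollary directly from Corollaries 3.9 and 3.11 by expanding the definition of $\partial_s$ on each summand. First I would choose $s_0$ to be the minimum of the two thresholds produced by Corollaries 3.9 and 3.11, so that both statements hold simultaneously for $s<s_0$; after that, the splitting argument is purely formal.

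For the first inclusion, I would fix $\bgamma\in a_s$ and write
\[
\partial_s\bgamma
=\sum_{\bdelta\in a_s}\sharp\hat{\mathcal{M}}_s^{d,0}(\bgamma,\bdelta)\bdelta
+\sum_{\bdelta\in b_s}\sharp\hat{\mathcal{M}}_s^{d,0}(\bgamma,\bdelta)\bdelta,
\]
using $c_s=a_s\sqcup b_s$. Corollary 3.9 says that the moduli spaces $\hat{\mathcal{M}}_s^d(\bgamma,\bdelta)$ with $\bgamma\in a_s$ and $\bdelta\in b_s$ are empty once $s<s_0$, so in particular their $0$-dimensional components vanish and the second sum is zero. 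Hence $\partial_s\bgamma\in A_s$, which gives $\partial_s A_s\subset A_s$.

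The second inclusion is entirely symmetric: for $\bgamma\in b_s$ I would do the same decomposition of $\partial_s\bgamma$ into an $A_s$-part and a $B_s$-part, and Corollary 3.11 now kills the $A_s$-part because $\hat{\mathcal{M}}_s^d(\bgamma,\bdelta)=\emptyset$ for $\bgamma\in b_s$, $\bdelta\in a_s$ once $s<s_0$. This yields $\partial_s B_s\subset B_s$ and finishes the proof.

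There is essentially no obstacle here, since all the analytic content, namely the ruling out of strips that would cross between the near-diagonal critical points and the near-double-point critical points, has already been absorbed into Corollaries 3.9 and 3.11 via the energy estimate $E(u)<\kappa<\sigma$ and the compactness argument of Proposition 3.4. The only thing to be slightly careful about is to take a single $s_0$ that works for both directions simultaneously, which is why I would start by passing to the minimum of the two thresholds.
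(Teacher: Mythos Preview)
Your proposal is correct and matches the paper's own reasoning exactly: the paper simply states that ``From Corollary 3.9 and 3.11, the boundary operator $\partial_s$ has no cross term when $s<s_0$,'' and your argument is precisely the formal unpacking of that sentence. The only care you take beyond the paper's one-line justification is making explicit the choice of a common $s_0$, which is appropriate.
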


For $s<s_0$, let $H(A_s,\partial_s)$ and $H(B_s,\partial_s)$ denote
the homologies of $(A_s,\partial_s)$ and $(B_s,\partial_s)$, respectively.

Since $L$ is compact, there exists a function $\varepsilon:[0,1]\to[0,\infty)$ such that
$\lim_{s\to 0}\varepsilon(s)=0$ and $|F_s(\bgamma)|<\varepsilon(s)$ for $\bgamma\in a_s$.
In particular, $F_s(\bgamma)=f_s(\bgamma)$ for $\bgamma\in a_s$
when $\varepsilon(s)\leq \min\{b_+,-b_-\}$. 
We often use this $\varepsilon:[0,1]\to[0,\infty)$.

\begin{pro}
There exists $s_0$ such that for $s<s_0$
the image of $u$ is contained in $\pi(U)$
for any $\bgamma,\bdelta\in a_s$ and $(u,\bar{u})\in\hat{\mathcal{M}}_s(\bgamma,\bdelta)$.
\end{pro}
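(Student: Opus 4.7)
The plan is to argue by contradiction using the Floer--Gromov compactness strategy of Proposition 3.4. Suppose no such $s_0$ exists; then there are sequences $s_i\to 0$, critical points $\bgamma_i,\bdelta_i\in a_{s_i}$, and gradient trajectories $(u_i,\bar{u}_i)\in\hat{\mathcal{M}}_{s_i}(\bgamma_i,\bdelta_i)$ whose images leave $\pi(U)$.

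The first step is to control the energy. Since $\bgamma_i,\bdelta_i\in a_{s_i}$, the function $\varepsilon:[0,1]\to[0,\infty)$ introduced just before the statement gives $|F_{s_i}(\bgamma_i)|,|F_{s_i}(\bdelta_i)|<\varepsilon(s_i)$, so by Lemma 3.2
\[
E(u_i)=F_{s_i}(\bgamma_i)-F_{s_i}(\bdelta_i)<2\varepsilon(s_i)\longrightarrow 0.
\]
In particular $E(u_i)<\kappa$ for large $i$, which puts us in the hypotheses of Proposition 3.4.

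Next I would extract limits of the endpoints. By compactness of $L$ and the description of $a_{s_i}$, after passing to a subsequence we may assume $\bar{\gamma}_i(0),\bar{\gamma}_i(1)\to x$ and $\bar{\delta}_i(0),\bar{\delta}_i(1)\to y$ for some $x,y\in L$, and correspondingly $\gamma_i,\delta_i$ converge uniformly to the constant maps at $\iota(x)$ and $\iota(y)$. Applying Proposition 3.4 with $s_\infty=0$, a further subsequence of $(u_i,\bar{u}_i)$ converges in the Floer--Gromov sense to a broken gradient trajectory $((v_1,\bar{v}_1),\ldots,(v_N,\bar{v}_N))$ without bubble tree, joining the constant trajectory at $\iota(x)$ to the constant trajectory at $\iota(y)$. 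The total energy of the limit equals $\lim E(u_i)=0$, so each $v_j$ has zero energy and is therefore a constant strip; in particular $x=y$ and every $v_j$ is the constant map at $\iota(x)$.

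Uniform convergence to this trivial limit then contradicts the hypothesis: choosing suitable representatives in the moduli spaces, Floer--Gromov convergence to a trivial broken trajectory forces $u_i\to\iota(x)$ uniformly on $\mathbb{R}\times[0,1]$, so eventually the image of $u_i$ sits in any prescribed neighborhood of $\iota(x)\in\iota(L)\subset\pi(U)$. The main obstacle I anticipate is precisely this last step: ensuring that convergence to a trivial broken trajectory yields \emph{uniform} convergence on the whole strip rather than merely on compact subsets after $\tau$-translation. Once bubbling and nontrivial breaking are excluded (which they are, as every bubble or nonconstant strip component would carry strictly positive energy), this should follow from standard $\varepsilon$-regularity and monotonicity arguments for pseudoholomorphic strips with Lagrangian boundary, as used in \cite{sa1}.
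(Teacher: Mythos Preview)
Your outline is sound, and the gap you flag can indeed be closed by the standard tools you name. Once the energy tends to zero and bubbling is excluded, one has a uniform bound on $|du_i|$; then either a monotonicity argument (applied in a ball about a point of $u_i$ lying at fixed positive distance from $\iota(L)$, where the Lagrangian boundary condition is vacuous) or a translate-and-extract argument (shift $\tau_i$ to $0$, pass to a $C^1_{\mathrm{loc}}$ limit, which is a zero-energy $J_t$-holomorphic strip and hence constant, contradicting $\int_0^1|\partial_t u_i(\tau_i,t)|\,dt\geq r_0$) finishes the job.

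The paper, however, does not route through Proposition~3.4 or Floer--Gromov compactness at all; it argues directly and quantitatively. First, by the same no-bubbling contradiction (energy $<2\varepsilon(s_i)\to 0$ forbids nontrivial bubbles), it obtains a uniform bound $\sup|du|<C$ valid for all $s<s_0$. Then it applies the mean value inequality from \cite{ms} on interior discs $D(z_0;\varepsilon(s)^{1/4})$ to get $|du(z_0)|^2\leq\frac{16}{\pi}\varepsilon(s)^{1/2}$ for $z_0$ at distance $\geq\varepsilon(s)^{1/4}$ from the boundary, and combines this with the crude bound $C$ on the two boundary strips of width $\varepsilon(s)^{1/4}$ to show that $\int_0^1|\partial_t u(\tau,t)|\,dt\to 0$ uniformly in $\tau$ as $s\to 0$. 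Since $u(\tau,0)\in\iota(L)$, this forces the whole image into $\pi(U)$ once $s$ is small.

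The trade-off: your approach is softer and fits neatly into the compactness framework already set up, but the step you defer is precisely where the paper does its work; the paper's approach is explicit, gives a rate in terms of $\varepsilon(s)$, and sidesteps the mildly awkward situation of invoking Floer--Gromov convergence toward a degenerate (constant) broken trajectory.
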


\begin{proof}
First, there exist $s_0$ and $C$ such that, for $s<s_0$,
$\sup_{(\tau,t)}|du(\tau,t)|_{J_t}<C$ for any $\bgamma,\bdelta\in a_s$ and
$(u,\bar{u})\in\hat{\mathcal{M}}_s(\bgamma,\bdelta)$.
(Suppose on the contrary that there is no such $s_0$ nor $C$.
Then there exist sequences $s_i\to 0$, 
$(u_i,\bar{u}_i)\in \hat{\mathcal{M}}_{s_i}(\bgamma_i,\bdelta_i)$
for some $\bgamma_i,\bdelta_i\in a_{s_i}$, and $(\tau_i,t_i)$ 
such that $|du_i(\tau_i,t_i)|\to\infty$.
Then, take a subsequence if necessary, 
there appear {\it non-trivial} bubble trees by the rescaling argument \cite{ms}.
On the other hand, $E(u_i)=F_{s_i}(\bgamma_i)-F_{s_i}(\bdelta_i)<2\varepsilon(s_i)\to 0$,
which contradicts that
the symplectic area of the non-trivial bubble trees is greater than 0.)
Let $D(z_0;r):=\{z\in \mathbb{C}:|z-z_0|<r\}\subset \mathbb{R}\times(0,1)$.
Following the mean value inequality of \cite{ms}, there exists $\hbar$ such that, 
if $u$ is a solution of the perturbed Cauchy--Riemann equation and
$\int_{D(z_0;r)}|du|_{J_t}^2<\hbar$, 
then $|du(z_0)|_{J_t}\leq \frac{8}{\pi r^2}\int_{D(z_0;r)}|du|_{J_t}^2$.
Take $s_1$ such that $2\varepsilon(s)<\hbar$ for $s<s_1$. 
Since $E(u)<2\varepsilon(s)$, we obtain
\[
|du(z_0)|_{J_t}^2\leq\frac{8}{\pi \varepsilon(s)^{1/2}}
\int_{D(z_0;\varepsilon(s)^{1/4})}|du|_{J_t}^2
\leq\frac{16}{\pi}\varepsilon(s)^{1/2}
\]
for any $\bgamma,\bdelta\in a_s$ and $(u,\bar{u})\in \hat{\mathcal{M}}_s(\bgamma,\bdelta)$.
Thus for $s<\min\{s_0,s_1\}$
\begin{eqnarray*}
\int_0^{t_0}\left|\frac{\partial u(\tau,t)}{\partial t}\right|_{J_t}dt
&\leq&\int_{[0,\varepsilon(s)^{1/4}]\cup[1-\varepsilon(s)^{1/4},1]}
\left|\frac{\partial u(\tau,t)}{\partial t}\right|_{J_t}dt
\\
&&
+\int_{[\varepsilon(s)^{1/4},1-\varepsilon(s)^{1/4}]}
\left|\frac{\partial u(\tau,t)}{\partial t}\right|_{J_t}dt
\\
&\leq&
2C\varepsilon(s)^{1/4}+\frac{4}{\sqrt{\pi}}(1-2\varepsilon(s)^{1/4})\varepsilon(s)^{1/4}
\\
&\to &0\ (\mbox{as}\ s\to 0)
\end{eqnarray*}
for any $\bgamma,\bdelta\in a_s$ and $(u,\bar{u})\in\hat{\mathcal{M}}_s(\bgamma,\bdelta)$,
which implies that the image of $u$ is contained in $\pi(U)$ when $s$ is small.
\end{proof}

Finally, we have the following theorem:

\begin{thm}
There exists $s_0$ such that for $s<s_0$  
\[
H(A_s,\partial_s)\cong \bigoplus_{k=0}^{\dim L}H_k(L;\mathbb{Z}_2).
\] 
\end{thm}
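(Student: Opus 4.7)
The plan is to reduce $(A_s,\partial_s)$ to the Lagrangian Floer complex of the zero section of $T^*L$ under a small Hamiltonian perturbation, and then invoke the standard isomorphism with Morse homology of a generating function on $L$.

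First, by Proposition 3.13, for $s<s_0$ every pseudoholomorphic strip $(u,\bar{u})\in\hat{\mathcal{M}}_s(\bgamma,\bdelta)$ with $\bgamma,\bdelta\in a_s$ has image contained in $\pi(U)$. Since $\pi:U\to M$ is a local diffeomorphism near $0_L$, and the boundary data $\bar{u}$ pins down a single sheet via $i_L$, each such strip lifts uniquely to a map $\tilde{u}:\mathbb{R}\times[0,1]\to U\subset T^*L$ with boundary on $0_L$, satisfying the perturbed Cauchy--Riemann equation for the Hamiltonian $sH\circ\pi$. The elements of $a_s$ are identified in the same manner with the chords from $0_L$ to $0_L$ for $X_{sH\circ\pi}$ lying in $U$, equivalently the intersection points of $0_L$ with $(\varphi^{sH\circ\pi}_1)^{-1}(0_L)$ in $U$. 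Thus the subcomplex $(A_s,\partial_s)$ is canonically identified with a Floer-type complex for $0_L\subset T^*L$.

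Second, I would observe that since $|F_s(\bgamma)|<\varepsilon(s)$ for every $\bgamma\in a_s$, we have $|F_s(\bgamma)-F_s(\bdelta)|<2\varepsilon(s)$; once $s$ is small enough that $2\varepsilon(s)<\kappa$, this forces $F_s=f_s$ on $a_s$, so the distinguished condition is automatic within $a_s$. Hence $\partial_s|_{A_s}$ counts \emph{all} rigid Floer strips between generators in $a_s$, with no further restriction; no cross terms with $B_s$ appear by Corollary 3.12.

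Third, I would invoke the now-standard fact that Lagrangian Floer homology of the zero section of $T^*L$ under a $C^1$-small Hamiltonian is canonically isomorphic to Morse homology of $L$ with $\mathbb{Z}_2$-coefficients. For generic $H$ and $s$ small, the intersections $0_L\cap(\varphi^{sH\circ\pi}_1)^{-1}(0_L)$ in $U$ correspond bijectively to critical points of a Morse function $h_s:L\to\mathbb{R}$ (to leading order $s\int_0^1 H(t,\iota(\cdot))\,dt$), and Floer's adiabatic/small-Hamiltonian comparison (as carried out by Floer, Hofer--Salamon and Oh) identifies the index-one moduli spaces counted by $\partial_s|_{A_s}$ with Morse trajectories of $h_s$. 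This yields a chain isomorphism between $(A_s,\partial_s)$ and the Morse complex of $h_s$, whose homology is $\bigoplus_{k=0}^{\dim L}H_k(L;\mathbb{Z}_2)$.

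The main obstacle is the third step, namely the verification that the cotangent-bundle comparison argument applies here. Fortunately the two essential inputs are already available: the $C^0$-confinement of strips in a tubular neighborhood of $0_L$ is exactly the content of Proposition 3.13, while the exactness of $\iota$ together with the a priori energy bound $E(u)<2\varepsilon(s)\ll\sigma$ rules out all three bubble types listed in Proposition 3.4. Once confinement and bubble-freeness are in hand, one can run Floer's original argument (or the $\lambda$-rescaling of Hofer--Salamon) verbatim inside $\pi(U)$ to obtain the identification with the Morse complex.
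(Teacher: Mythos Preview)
Your proposal is correct and follows essentially the same route as the paper: invoke Proposition~3.13 to confine all relevant strips to $\pi(U)$, identify $(A_s,\partial_s)$ with the usual Floer complex of the zero section in $T^*L$ for the small Hamiltonian $sH\circ\pi$, and then cite Floer's theorem to conclude. Your explicit check that the distinguished condition is automatic on $a_s$ for small $s$ (since $|F_s|<\varepsilon(s)$ there) is a detail the paper leaves implicit but which is needed for the identification of $\partial_s|_{A_s}$ with the ordinary Floer differential.
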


\begin{proof}
By Proposition 3.13, there exists $s_0$ such that for $s<s_0$
the image of $u$ is contained in $\pi(U)$
for any $\bgamma,\bdelta\in a_s$ and $(u,\bar{u})\in\hat{\mathcal{M}}_s^d(\bgamma,\bdelta)$.
In this case, $(A_s,\partial_s)$ agrees with the usual Floer complex generated by
the time-1 trajectories of $X_{sH\circ \pi}$ which start and end on $0_L$ in $T^*L$.
Thus $H(A_s,\partial_s)$ is isomorphic to 
$\bigoplus_{k=0}^{\dim L}H_k(L;\mathbb{Z}_2)$ (\cite{ak} and \cite{fl3}).
\end{proof}

\section{\bf Continuations}

Let $\rho:\mathbb{R}\to [0,1]$ be a smooth function such that
for some $R>0$ $\rho(\tau)=s_-$ when $\tau<-R$ and $\rho(\tau)=s_+$ when $\tau>R$.
We call such $\rho$ a {\it continuation function}.
In particular, for $0<s\leq S\leq 1$,
let $\rho_+$ be a {\it non-decreasing} continuation function such that
$\rho_+(\tau)=s$ when $\tau<-R$ and $\rho_+(\tau)=S$ when $\tau>R$,
and $\rho_-$ a {\it non-increasing} continuation function such that
$\rho_-(\tau)=S$ when $\tau<-R$ and $\rho_-(\tau)=s$ when $\tau>R$.

Let $s_-,s_+\in T$.
We define $\mathcal{M}_{\rho}(\bgamma,\bdelta)$ 
for $\bgamma=(\gamma,\bar{\gamma})\in c_{s_-}$ and 
$\bdelta=(\delta,\bar{\delta})\in c_{s_+}$
to be the set of 
the {\it continuation trajectories} $(u,\bar{u})$ from $\bgamma$ to $\bdelta$, i.e.
the pairs of smooth maps $u:\mathbb{R}\times[0,1]\to M$
and $\bar{u}:\mathbb{R}\times \{0,1\}\to L$ such that: 
\begin{itemize}
\item $u(\tau,0)=\iota\circ\bar{u}(\tau,0)$ and $u(\tau,1)=\iota\circ\bar{u}(\tau,1)$,
\item $\lim_{\tau\to -\infty}u(\tau,t)=\gamma(t)$ 
and $\lim_{\tau\to-\infty}\bar{u}(\tau,i)=\bar{\gamma}(i)$ for $i=0,1$,
\item $\lim_{\tau\to\infty}u(\tau,t)=\delta(t)$ 
and $\lim_{\tau\to\infty}\bar{u}(\tau,i)=\bar{\delta}(i)$ for $i=0,1$,
\item $u$ is a solution of the perturbed Cauchy--Riemann equation:
\[
\frac{\partial u(\tau,t)}{\partial \tau}+J_t(u(\tau,t))
\left(
\frac{\partial u(\tau,t)}{\partial t}-\rho(\tau)X_H(u(\tau,t))
\right)=0.
\]
\end{itemize}
In this case, $\mathbb{R}$ does {\it not} act on $\mathcal{M}_{\rho}(\bgamma,\bdelta)$
when $\rho(\tau)$ is not constant.
Since the boundary value $\iota\circ\bar{u}$ does not switch 
sheets at non-injective points of the immersion,
we can use the usual local theory of the perturbed Cauchy--Riemann equation.
Hence we have the following theorem:

\begin{thm}
For generic $\{J_t\}_{t\in [0,1]}$,
$\mathcal{M}_{\rho}(\bgamma,\bdelta)$ is a finite dimensional smooth manifold.
\end{thm}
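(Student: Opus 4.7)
The plan is the standard Banach manifold / Sard--Smale transversality scheme for solutions of a Floer-type perturbed Cauchy--Riemann equation, following the blueprint of \cite{fl1,fl2,sa1} and adapted to Lagrangian immersions as in \cite{ak}. The essential feature that makes the immersion case go through has already been exploited earlier in the paper: the lifted boundary $\bar{u}$ never switches sheets at a non-injective point of $\iota$, so locally on each branch of $\iota(L)$ the situation is that of an embedded Lagrangian, and the standard local theory of the perturbed Cauchy--Riemann equation applies verbatim.

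First, I would set up the Fredholm framework. Fix $p>2$ and introduce the Banach manifold $\mathcal{B}_\rho(\bgamma,\bdelta)$ of pairs $(u,\bar{u})$ with $u$ of $W^{1,p}_{\mathrm{loc}}$-regularity converging exponentially at $\tau\to\pm\infty$ to $\gamma$ and $\delta$ respectively, satisfying the immersion-boundary conditions $u(\tau,i)=\iota\circ\bar{u}(\tau,i)$ for $i=0,1$. Over it let $\mathcal{E}$ be the Banach bundle whose fiber at $(u,\bar{u})$ is $L^p(\mathbb{R}\times[0,1],\Lambda^{0,1}\otimes u^*TM)$, and consider the smooth section
\[
\mathcal{F}_\rho(u,\bar{u}):=\frac{\partial u}{\partial\tau}+J_t(u)\left(\frac{\partial u}{\partial t}-\rho(\tau)X_H(u)\right),
\]
whose zero set is exactly $\mathcal{M}_\rho(\bgamma,\bdelta)$.

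Second, I would verify Fredholmness of the linearization $D\mathcal{F}_\rho$ at a solution. Since $\rho(\tau)$ is constant outside a compact interval and $\bgamma\in c_{s_-}$, $\bdelta\in c_{s_+}$ are non-degenerate (the standing transversality assumption imposed on $s_\pm\in T$ in section 3), the asymptotic operators at $\tau=\pm\infty$ are invertible. This is the standard strip setup and produces a Fredholm operator whose index is the spectral flow of the asymptotic path; elliptic regularity upgrades $W^{1,p}$-solutions to smooth ones.

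Third, and most substantially, I would run the universal moduli space argument to obtain transversality. Let $\mathcal{J}^\varepsilon$ be Floer's Banach manifold of $C^\varepsilon$-small perturbations of a fixed reference time-dependent tame almost complex structure. The universal zero set $\mathcal{M}^{\mathrm{univ}}_\rho$ is a smooth Banach manifold once the universal linearization is shown to be surjective, and then Sard--Smale applied to the projection $\mathcal{M}^{\mathrm{univ}}_\rho\to\mathcal{J}^\varepsilon$ yields a second-category set of regular $\{J_t\}$, for which $\mathcal{M}_\rho(\bgamma,\bdelta)$ is a smooth manifold of the expected dimension. The main obstacle is surjectivity of the universal linearization: this reduces to producing a regular point $(\tau_0,t_0)\in\mathbb{R}\times(0,1)$ at which $\partial_\tau u(\tau_0,t_0)\neq 0$ and $u(\tau_0,t_0)$ lies off the self-intersection locus of $\iota(L)$, since at such a point a pointwise perturbation of $J_t$ fills any putative cokernel element. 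Existence of such a regular point follows from the transversality of non-injective points of $\iota$ (so the self-intersection locus is nowhere dense) combined with elliptic unique continuation applied to $\partial_\tau u$, which cannot vanish identically unless $u$ is $\tau$-independent, a case excluded by the non-autonomy of $\rho$ except when $\bgamma=\bdelta$.
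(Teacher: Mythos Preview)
Your proposal is correct and aligns with the paper's approach: the paper does not actually prove Theorem~4.1, but simply states it as a consequence of the preceding remark that the boundary lift $\bar{u}$ does not switch sheets, so the usual local theory of the perturbed Cauchy--Riemann equation applies (implicitly invoking \cite{ak,fl1,fl2,sa1} as it did for Theorem~3.1). Your sketch is precisely an elaboration of that standard Banach-manifold/Sard--Smale scheme, and is the argument the paper has in mind.

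One minor remark on your third paragraph: since $\{J_t\}$ is $t$-dependent, the transversality argument does not require $u(\tau_0,t_0)$ to avoid the self-intersection locus of $\iota(L)$; it suffices (as in Floer--Hofer--Salamon) to find an interior point with $\partial_\tau u(\tau_0,t_0)\neq 0$, which unique continuation provides whenever $u$ is not $\tau$-independent. The residual $\tau$-independent case forces $X_H$ to vanish along $\gamma$ (since $\rho$ is non-constant), and is handled by the genericity of $H$ already assumed. This is a cosmetic imprecision, not a gap.
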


Let $\mathcal{M}_{\rho}^k(\bgamma,\bdelta)$ denote the $k$-dimensional component of 
$\mathcal{M}_{\rho}(\bgamma,\bdelta)$.

We define the energy $E(u)$ of $(u,\bar{u})\in\mathcal{M}_{\rho}(\bgamma,\bdelta)$ by
\begin{eqnarray*}
E(u)&:=&\int_{-\infty}^{\infty}\int_0^1
\left|\frac{\partial u(\tau,t)}{\partial \tau}\right|_{J_t}^2 dtd\tau
\\
&=&
\int_{-\infty}^{\infty}\int_0^1
\omega\left(
\frac{\partial u(\tau,t)}{\partial \tau},J_t(u(\tau,t))\frac{\partial u(\tau,t)}{\partial \tau}
\right)
dtd\tau.
\end{eqnarray*}

\begin{lem}
For $(u,\bar{u})\in\mathcal{M}_{\rho}(\bgamma,\bdelta)$, 
\[
E(u)=F_{s_-}(\bgamma)-F_{s_+}(\bdelta)
-\int_{-\infty}^{\infty}\frac{d\rho(\tau)}{d\tau}\int_0^1H(t,u(\tau,t))dtd\tau.
\]
\end{lem}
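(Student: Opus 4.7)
The plan is to mimic the proof of Lemma 3.2, tracking the extra $\tau$-dependence of $\rho$. First I would substitute the perturbed Cauchy--Riemann equation into one copy of $\partial_\tau u$ inside the $\omega$-integrand for $E(u)$, which turns the energy into
\[
E(u)=\int_{-\infty}^{\infty}\!\!\int_0^1 \omega\!\left(\frac{\partial u}{\partial \tau},\frac{\partial u}{\partial t}-\rho(\tau)X_H(u)\right)dtd\tau.
\]
The first piece is just $\int_{\mathbb{R}\times[0,1]}u^*\omega$. For the second piece I would use $\omega(\partial_\tau u,-\rho X_H(u))=\rho(\tau)\,dH(\partial_\tau u)=\rho(\tau)\,\partial_\tau[H(t,u(\tau,t))]$, where $dH$ means differentiation in the space variable and we use the defining relation $dH=\omega(X_H,\cdot)$ together with the antisymmetry of $\omega$.

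Next I would integrate by parts in the $\tau$-variable. Here the calculation genuinely diverges from Lemma 3.2: because $\rho(\tau)$ is not constant, the $\tau$-derivative hits both $\rho$ and $H(t,u)$, producing a boundary term and an interior term. The boundary term evaluates to $s_+ H(t,\delta(t))-s_- H(t,\gamma(t))$ after $t$-integration, and the interior term is precisely $-\int_{-\infty}^{\infty}\frac{d\rho}{d\tau}\int_0^1 H(t,u(\tau,t))dtd\tau$. So
\[
E(u)=\int_{\mathbb{R}\times[0,1]}\!\!u^*\omega+s_+\!\!\int_0^1\!\!H(t,\delta)dt-s_-\!\!\int_0^1\!\!H(t,\gamma)dt-\int_{-\infty}^{\infty}\frac{d\rho}{d\tau}\!\int_0^1\!\!H(t,u)dtd\tau.
\]

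To finish I would rewrite the first three terms as $F_{s_-}(\bgamma)-F_{s_+}(\bdelta)$. Pick any reference cap $(w,\bar{w})$ for $\bgamma$ computing $F_{s_-}(\bgamma)$; then the concatenation of $(w,\bar{w})$ with the continuation strip $(u,\bar{u})$ is an admissible cap for $\bdelta$, and the exactness of $\iota$ plus independence of $F_{s_+}$ from the choice of cap give $F_{s_+}(\bdelta)=-\int w^*\omega-\int_{\mathbb{R}\times[0,1]}u^*\omega-s_+\int_0^1 H(t,\delta)dt$. Subtracting from $F_{s_-}(\bgamma)=-\int w^*\omega-s_-\int_0^1 H(t,\gamma)dt$ yields exactly the three terms above.

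The only genuinely delicate point is the sign bookkeeping when passing from $\omega(\partial_\tau u, X_H)$ to $\partial_\tau[H(t,u)]$ and in the integration by parts; everything else is routine once the definition of $F_s$ is unpacked. The boundary contributions of $u$ on $\iota(L)$ do not cause trouble because $\iota^*\omega=0$, so no stray Stokes terms appear when we rearrange $\int u^*\omega$ via concatenation with the reference cap.
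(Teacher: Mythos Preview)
Your proposal is correct and follows essentially the same route as the paper's own proof: substitute the perturbed Cauchy--Riemann equation into $E(u)$, split off $\int u^*\omega$, convert $\omega(\partial_\tau u,-\rho X_H)$ to $\rho\,\partial_\tau H(t,u)$, integrate by parts in $\tau$, and identify the resulting three terms with $F_{s_-}(\bgamma)-F_{s_+}(\bdelta)$. Your explicit concatenation argument for that last identification is a bit more detailed than the paper's (which just writes the equality directly), but the underlying computation is identical.
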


\begin{proof}
Since $u$ satisfies the perturbed Cauchy--Riemann equation,
\begin{eqnarray*}
E(u)&=&\int_{-\infty}^{\infty}\int_0^1
\omega\left(\frac{\partial u(\tau,t)}{\partial \tau},\frac{\partial u(\tau,t)}{\partial t}-\rho(\tau)X_H(u(\tau,t))
\right)
\\
&=&
\int_{\mathbb{R}\times[0,1]}u^*\omega
+\int_{-\infty}^{\infty}\rho(\tau)
\int_0^1\frac{\partial H(t,u(\tau,t))}{\partial \tau}dtd\tau
\\
&=&
\int_{\mathbb{R}\times[0,1]}u^*\omega
+s_+\int_0^1H(t,\delta(t))dt-s_-\int_0^1H(t,\gamma(t))dt
\\
&&
\quad \quad \quad \quad \quad \quad \quad 
-\int_{-\infty}^{\infty}\frac{d\rho(\tau)}{d\tau}\int_0^1H(t,u(\tau,t))dtd\tau
\\
&=&
F_{s_-}(\bgamma)-F_{s_+}(\bdelta)
-\int_{-\infty}^{\infty}\frac{d\rho(\tau)}{d\tau}\int_0^1H(t,u(\tau,t))dtd\tau.
\end{eqnarray*}
\end{proof}

We call $(u,\bar{u})\in \mathcal{M}_{\rho}(\bgamma,\bdelta)$ a {\it distinguished} continuation 
trajectory if
\[
F_{s_-}(\bgamma)-F_{s_+}(\bdelta)
=
f_{s_-}(\bgamma)-f_{s_+}(\bdelta),
\]
and define $\mathcal{M}_{\rho}^d(\bgamma,\bdelta)$ to be the set of the distinguished 
continuation trajectories in $\mathcal{M}_{\rho}(\bgamma,\bdelta)$; in fact,
since $\iota:L\to M$ is exact Lagrangian, 
\[
\mathcal{M}_{\rho}^d(\bgamma,\bdelta)
=
\begin{cases}
\mathcal{M}_{\rho}(\bgamma,\bdelta) &
\mbox{if } F_{s_-}(\bgamma)-F_{s_+}(\bdelta)
=
f_{s_-}(\bgamma)-f_{s_+}(\bdelta),
\\
\emptyset & \mbox{otherwise}.
\end{cases}
\]
Let $\mathcal{M}_{\rho}^{d,k}(\bgamma,\bdelta)$ denote
the $k$-dimensional component of 
$\mathcal{M}_{\rho}^d(\bgamma,\bdelta)$.

If $M$ is non-compact, we assume that
$H:[0,1]\times M\to \mathbb{R}$ is compactly supported.
Then
\[
\int_0^1\min_{x\in M}H(t,x)dt
\leq 0\leq
\int_0^1\max_{x\in M}H(t,x)dt.
\]
On the other hand, if $M$ is closed, 
since $H(t,x)$ and $H(t,x)+c$ for $c\in\mathbb{R}$ give the same Hamiltonian vector 
field, we may assume
\[
\int_0^1\min_{x\in M}H(t,x)dt
\leq 0\leq
\int_0^1\max_{x\in M}H(t,x)dt.
\]
Define $a_-$ and $a_+$ by
\[
a_-:=-\int_0^1\max_{x\in M}H(t,x)dt
\quad \mbox{and} \quad
a_+:=-\int_0^1\min_{x\in M}H(t,x)dt.
\]
Note that $\|H\|=a_+-a_-$.
Suppose $\|H\|<\sigma$, 
and we choose $\|H\|<\kappa<\sigma$ and 
the interval $[b_-,b_+)\subset \mathbb{R}$ such that
\[
b_-<a_-\leq 0\leq a_+<b_+
\quad \mbox{and} \quad
b_+-b_-=\kappa.
\]
Note that we use $\|H\|<\sigma$ here.

\begin{lem}
There exists $s_0$ such that for $s<s_0$
\[
E(u)<\kappa
\]
for $\bgamma\in a_s$ and $(u,\bar{u})\in \mathcal{M}_{\rho_+}^d(\bgamma,\bdelta)$.
\end{lem}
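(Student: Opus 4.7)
The plan is to combine Lemma 4.2 with the distinguished condition to reduce the bound to a single integral involving $\rho_+'$, and then to exploit the sign of $\rho_+'$ together with the smallness bound $|F_s(\bgamma)|<\varepsilon(s)$ available for $\bgamma\in a_s$. Applying Lemma 4.2 with $s_-=s$ and $s_+=S$ gives
\[
E(u)=F_s(\bgamma)-F_S(\bdelta)-\int_{-\infty}^{\infty}\frac{d\rho_+(\tau)}{d\tau}\int_0^1 H(t,u(\tau,t))\,dt\,d\tau,
\]
and the distinguished hypothesis $(u,\bar{u})\in\mathcal{M}_{\rho_+}^d(\bgamma,\bdelta)$ lets me replace the first two terms by $f_s(\bgamma)-f_S(\bdelta)$.

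The main computational step is bounding the correction integral. Since $\rho_+$ is non-decreasing, $\rho_+'\geq 0$; and pointwise $\int_0^1 H(t,u(\tau,t))\,dt\geq\int_0^1\min_{x\in M}H(t,x)\,dt=-a_+$. Therefore
\[
-\int_{-\infty}^{\infty}\rho_+'(\tau)\int_0^1 H(t,u(\tau,t))\,dt\,d\tau\leq a_+\int_{-\infty}^{\infty}\rho_+'(\tau)\,d\tau=a_+(S-s)\leq a_+.
\]
This is precisely why the non-decreasing choice of $\rho_+$ matters: it aligns the sign of $\rho_+'$ with the lower bound on $H$ so as to produce an upper bound controlled by $a_+$.

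To finish, I invoke the bounds I have on $f_s(\bgamma)$ and $f_S(\bdelta)$. By the paragraph preceding Proposition 3.13, once $\varepsilon(s)\leq\min\{b_+,-b_-\}$ we have $f_s(\bgamma)=F_s(\bgamma)\leq\varepsilon(s)$ for $\bgamma\in a_s$, while $f_S(\bdelta)\geq b_-$ by construction. Putting everything together,
\[
E(u)\leq\varepsilon(s)-b_-+a_+=\kappa-(b_+-a_+)+\varepsilon(s).
\]
Because the choice $a_+<b_+$ leaves a strictly positive gap, I may pick $s_0$ so small that $\varepsilon(s)<b_+-a_+$ for $s<s_0$, which yields $E(u)<\kappa$. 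The only delicate point is the sign bookkeeping in the middle step; everything else is a direct invocation of earlier results.
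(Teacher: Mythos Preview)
Your proof is correct and follows essentially the same approach as the paper: apply Lemma 4.2, use the non-decreasing property of $\rho_+$ to bound the correction integral by $(S-s)a_+\leq a_+$, invoke the distinguished condition to replace $F$ by $f$, and then use $f_s(\bgamma)<\varepsilon(s)$ together with $f_S(\bdelta)\geq b_-$ and the choice $\varepsilon(s)<b_+-a_+$ to conclude $E(u)<b_+-b_-=\kappa$.
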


\begin{proof}
By Lemma 4.2,
since $\rho_+$ is non-decreasing,  we obtain
\[
E(u)\leq F_s(\bgamma)-F_S(\bdelta)+(S-s)a_+.
\]
Note that there exists $s_0$ such that, for $s<s_0$, 
$\varepsilon(s)<b_+-a_+$ and $|f_s(\bgamma)|<\varepsilon(s)$ for $\bgamma\in a_s$.
Since $(u,\bar{u})$ is distinguished and $b_-\leq f_S(\bdelta)$, for $s<s_0$,
\begin{eqnarray*}
E(u)&\leq& F_s(\bgamma)-F_S(\bdelta)+(S-s)a_+
\\
&=& f_s(\bgamma)-f_S(\bdelta)+(S-s)a_+
\\
&< &\varepsilon(s)-b_-+(S-s)a_+
\\
&<& b_+-b_-=\kappa.
\end{eqnarray*}
\end{proof}

\begin{pro}
There exists $s_0$ such that, for $s<s_0$,
$\{(u_i,\bar{u}_i)\}
\subset \mathcal{M}_{\rho_+}^d(\bgamma,\bdelta)$
for $\bgamma\in a_s$
has a subsequence which converges to a broken gradient/continuation trajectory
without bubble tree in the sense of Floer--Gromov convergence.
\end{pro}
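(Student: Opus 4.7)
The plan is to mimic the argument of Proposition~3.4, adapted to the continuation setting. The essential new ingredient is Lemma~4.3, which supplies the uniform energy bound $E(u_i)<\kappa$ as soon as $s<s_0$ and $\bgamma\in a_s$.

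First I would pass to a subsequence via the standard Floer--Gromov compactness theorem for the parametrized Cauchy--Riemann equation: the uniform bound $E(u_i)<\kappa$ from Lemma~4.3 together with the fixed endpoints $\bgamma\in a_s$ and $\bdelta$ guarantee that, after extracting a subsequence, the $(u_i,\bar{u}_i)$ converge to a broken trajectory $((v_1,\bar{v}_1),\ldots,(v_N,\bar{v}_N))$, possibly carrying bubble trees at interior points or along the boundary. Because the continuation function $\rho_+$ has a single non-constant window, at most one of the factors $v_j$ actually satisfies the continuation equation, while the remaining factors solve the time-shift invariant gradient flow equation for $F_s$ or $F_S$. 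This is the sense in which the limit is a broken gradient/continuation trajectory.

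Next I would rule out bubble trees by the same trichotomy used in the proof of Proposition~3.4. The possible tail components are: (i) a pseudoholomorphic sphere $v:S^2\to M$; (ii) a pseudoholomorphic disc $v:D^2\to M$ with $v|_{\partial D^2}=\iota\circ\bar{v}$ for some $\bar{v}:\partial D^2\to L$; or (iii) a pseudoholomorphic disc with punctured boundary datum in $K(\iota)$. Exactness of $\iota$ forces the $\omega$-area of (i) and (ii) to vanish, so these bubbles are constant and may be discarded. For (iii), any such bubble has $\omega$-area at least $\sigma$ by definition. On the other hand, the total symplectic area absorbed by the bubble trees is bounded by $\limsup E(u_i)\le \kappa<\sigma$, so no bubble of type (iii) can appear. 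Hence the limit carries no bubble tree at all, as required.

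The main obstacle I expect is technical rather than conceptual: one must verify that the standard Floer--Gromov compactness machinery extends cleanly to the $\tau$-dependent equation with coefficient $\rho_+(\tau)X_H$, and that the symplectic area of a bubble produced by rescaling is genuinely controlled by the energy. Both points are by now routine: after rescaling around a blow-up point of $|du_i|_{J_t}$, the Hamiltonian perturbation $\rho_+(\tau)X_H$ scales away and the rescaled maps converge to an honest $J_t$-holomorphic object whose symplectic area is absorbed into $\limsup E(u_i)$. Granted these standard inputs, the trichotomy above closes the argument exactly as in Proposition~3.4.
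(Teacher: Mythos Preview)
Your proposal is correct and follows essentially the same approach as the paper: invoke the uniform energy bound (the paper cites Lemma~4.2 for a crude bound sufficient for compactness and then Lemma~4.3 for the sharper bound $E(u_i)<\kappa$, while you use Lemma~4.3 for both purposes, which is fine), apply Floer--Gromov compactness to obtain a broken gradient/continuation trajectory with possible bubble trees, and then eliminate bubbles via the same trichotomy (i)--(iii) using exactness and $\kappa<\sigma$.
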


\begin{proof}
By Lemma 4.2, $E(u_i)$ is uniformly bounded, and
the Floer--Gromov compactness theorem implies that
$\{(u_i,\bar{u}_i)\}$ has a subsequence 
which converges to a broken gradient/continuation trajectory 
$((v_1,\bar{v}_1),\ldots,(v_N,\bar{v}_N))
\in \hat{\mathcal{M}}_s(\bgamma,\btheta_1)\times\cdots
\times \hat{\mathcal{M}}_s(\btheta_{i-1},\btheta_i)
\times \mathcal{M}_{\rho_+}(\btheta_i,\btheta_{i+1})
\times \hat{\mathcal{M}}_S(\btheta_{i+1},\btheta_{i+2})\times\cdots
\times\hat{\mathcal{M}}_S(\btheta_{N-1},\bdelta)$
with bubble trees; the tail components of the bubble trees are
\begin{itemize}
\item[(i)] a pseudoholomorphic sphere $v:S^2\to M$, 
\item[(ii)] a pseudoholomorphic disc
$v:D^2\to M$ with $\bar{v}:\partial D^2\to L$ such that $v|_{\partial D^2}=\iota\circ \bar{v}$,
\item[(iii)] a pseudoholomorphic disc $v:D^2\to M$ of $(v,\bar{v})\in K(\iota)$.
\end{itemize}
But, since our Lagrangian immersion is exact, the bubbles of (i) and (ii) can not occur.
Moreover, by Lemma 4.3, there exists $s_0$ such that for $s<s_0$
the symplectic area of the bubble trees is less than or equal to 
$\kappa\ (<\sigma)$, and the bubbles of (iii) can not occur.
Hence there is no bubble tree and
the subsequence converges to the broken gradient/continuation trajectory.
\end{proof}

To define our continuations, we use the following compactness theorems:

\begin{thm} 
There exists $s_0$ such that, for $s<s_0$,
$\mathcal{M}_{\rho_+}^{d,0}(\bgamma,\bdelta)$ for $\bgamma\in a_s$
is compact.
\end{thm}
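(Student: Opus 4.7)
My plan is to mimic the compactness argument for $\hat{\mathcal{M}}_s^{d,0}(\bgamma,\bdelta)$ in Theorem 3.5, with Proposition 4.4 playing the role of Proposition 3.4. Accordingly, I would take $s_0$ to be the minimum of the thresholds supplied by Lemma 4.3 (which delivers $E(u)<\kappa$ on $\mathcal{M}_{\rho_+}^d(\bgamma,\bdelta)$ when $\bgamma\in a_s$) and by Proposition 4.4 (which then excludes bubble trees in Floer--Gromov limits).

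The argument is by contradiction. Suppose for some $\bgamma\in a_s$ and $\bdelta$ the moduli space is not compact, and pick a sequence $(u_i,\bar{u}_i)\in\mathcal{M}_{\rho_+}^{d,0}(\bgamma,\bdelta)$ with no convergent subsequence in it. Proposition 4.4 furnishes a Floer--Gromov convergent subsequence whose limit is a broken gradient/continuation trajectory $((v_1,\bar{v}_1),\ldots,(v_N,\bar{v}_N))$ without bubble tree, consisting of $s$-gradient pieces on the left, a single $\rho_+$-continuation piece in the middle, and $S$-gradient pieces on the right.

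Virtual-dimension bookkeeping then forces $N=1$: the total (quotient) dimension is $0$, each component has nonnegative virtual dimension for generic $\{J_t\}_{t\in[0,1]}$, and each genuine breaking strictly drops the count. Thus the limit is a single continuation trajectory $(v_1,\bar{v}_1)\in\mathcal{M}_{\rho_+}^0(\bgamma,\bdelta)$. Since $\bgamma\in a_s$ and $\bdelta$ are fixed along the sequence, so are the four values $F_s(\bgamma)$, $F_S(\bdelta)$, $f_s(\bgamma)$, $f_S(\bdelta)$, and the distinguished equality $F_s(\bgamma)-F_S(\bdelta)=f_s(\bgamma)-f_S(\bdelta)$ passes automatically to the limit. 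Hence $(v_1,\bar{v}_1)\in\mathcal{M}_{\rho_+}^{d,0}(\bgamma,\bdelta)$, contradicting the choice of sequence.

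The main obstacle, relative to Theorem 3.5, is that the distinguished condition no longer automatically bounds the energy: Lemma 4.2 carries the extra term $-\int \rho'(\tau)\int H(t,u)\,dt\,d\tau$, so a uniform energy bound does not follow from a bound on $F$-differences alone. This is precisely the role of the hypotheses $\bgamma\in a_s$ and $s<s_0$: via Lemma 4.3 they recover $E(u)<\kappa\ (<\sigma)$, which in turn makes the bubble-exclusion in Proposition 4.4 applicable. Once that energy bound is secured, the remainder of the proof is a routine transcription of the argument for Theorem 3.5.
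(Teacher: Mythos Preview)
Your proposal is correct and follows essentially the same argument as the paper's proof: contradiction, Proposition 4.4 to extract a bubble-free broken limit, virtual-dimension counting to force $N=1$, and preservation of the distinguished condition to reach a contradiction. Your extra remark about the role of Lemma 4.3 in securing the energy bound is accurate exposition; note that the paper simply takes $s_0$ as in Proposition 4.4, since that proposition already absorbs the threshold from Lemma 4.3.
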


\begin{proof}
Let $s_0$ be as in Proposition 4.4, and $s<s_0$.
Suppose on the contrary that $\mathcal{M}_{\rho_+}^{d,0}(\bgamma,\bdelta)$ 
is not compact.
Then there exists a sequence
$\{(u_i,\bar{u}_i)\}
\subset \mathcal{M}_{\rho_+}^{d,0}(\bgamma,\bdelta)$ 
such that any subsequence does not converge in 
$\mathcal{M}_{\rho_+}^{d,0}(\bgamma,\bdelta)$.
On the other hand, 
from Proposition 4.4,
$\{(u_i,\bar{u}_i)\}$ has a subsequence
converges to a broken gradient/continuation trajectory 
$((v_1,\bar{v}_1),\ldots,(v_N,\bar{v}_N))$ without bubble tree.
In this case, $N$ turns out to be 1 and 
$(v_1,\bar{v}_1)\in\mathcal{M}_{\rho_+}^0(\bgamma,\bdelta)$
for generic $\{J_t\}_{t\in[0,1]}$
because of the virtual dimension counting.
Since the subsequence preserves the condition 
$F_s(\bgamma)-F_S(\bdelta)=f_s(\bgamma)-f_S(\bdelta)$,
the limit $(v_1,\bar{v}_1)$ is in $\mathcal{M}_{\rho_+}^{d,0}(\bgamma,\bdelta)$,
which contradicts that any subsequence does not converge in 
$\mathcal{M}_{\rho_+}^{d,0}(\bgamma,\bdelta)$. 
Thus $\mathcal{M}_{\rho_+}^{d,0}(\bgamma,\bdelta)$ is compact.
\end{proof}

\begin{thm} 
There exists $s_0$ such that, for $s<s_0$,
$\mathcal{M}_{\rho_+}^{d,1}(\bgamma,\bdelta)$ for $\bgamma\in a_s$
has a suitable compactification whose boundary is given by
\[
\bigcup_{\btheta\in a_s}
\hat{\mathcal{M}}_s^{d,0}(\bgamma,\btheta)
\times \mathcal{M}_{\rho_+}^{d,0}(\btheta,\bdelta)
\cup
\bigcup_{\btheta\in c_S}
\mathcal{M}_{\rho_+}^{d,0}(\bgamma,\btheta)
\times \hat{\mathcal{M}}_S^{d,0}(\btheta,\bdelta).
\]
\end{thm}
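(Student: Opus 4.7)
The plan is to adapt the gluing-plus-compactness scheme of Theorem 3.6 to the continuation setting, now based on Proposition 4.4 in place of Proposition 3.4. The gluing direction is routine: given any pair in either stratum on the right, the standard gluing construction produces a unique connected component of $\mathcal{M}_{\rho_+}^1(\bgamma,\bdelta)$ compactified by that pair, and summing the two distinguished relations at the intermediate critical point gives $F_s(\bgamma)-F_S(\bdelta)=f_s(\bgamma)-f_S(\bdelta)$, so the glued trajectory lies in $\mathcal{M}_{\rho_+}^{d,1}(\bgamma,\bdelta)$.

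For the compactness direction, take a sequence $\{(u_i,\bar u_i)\}\subset\mathcal{M}_{\rho_+}^{d,1}(\bgamma,\bdelta)$. Lemma 4.3 gives $E(u_i)<\kappa$, so Proposition 4.4 extracts a subsequential Floer--Gromov limit $(v_1,\ldots,v_N)$ without bubble trees. Virtual dimension counting forces $N=2$, with exactly one of the two components being a continuation trajectory (containing the non-constant part of $\rho_+$) and the other a gradient trajectory; generically each component lies in a $0$-dimensional moduli space. This gives two cases: either $(v_1,v_2)\in\hat{\mathcal{M}}_s^0(\bgamma,\btheta)\times\mathcal{M}_{\rho_+}^0(\btheta,\bdelta)$ with $\btheta\in c_s$, or $(v_1,v_2)\in\mathcal{M}_{\rho_+}^0(\bgamma,\btheta)\times\hat{\mathcal{M}}_S^0(\btheta,\bdelta)$ with $\btheta\in c_S$. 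In the first case, $\bgamma\in a_s$ together with $E(v_1)<\kappa$ and Theorem 3.8 rule out $\btheta\in b_s$, so $\btheta\in a_s$, matching the claimed boundary stratum; in the second case $\btheta$ is allowed to range over all of $c_S$, also as claimed.

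It remains to check that each piece is itself distinguished. Writing $F_s(\bgamma)-F_\bullet(\btheta)=f_s(\bgamma)-f_\bullet(\btheta)+m\kappa$ for some integer $m$, the companion equation at $\btheta$ acquires $-m\kappa$ by the overall distinguished relation. In the first case, $\bgamma,\btheta\in a_s$ forces $|F_s(\bgamma)-F_s(\btheta)|$ and $|f_s(\bgamma)-f_s(\btheta)|$ to each be smaller than $2\varepsilon(s)$, so $|m|\kappa<4\varepsilon(s)$ and $m=0$ for $s$ small. In the second case, $m\leq 0$ follows as in Theorem 3.6 from $0<E(v_2)=f_S(\btheta)-f_S(\bdelta)-m\kappa$ together with $f_S(\btheta)-f_S(\bdelta)<\kappa$. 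The main obstacle is the opposite inequality $m\geq 0$: because $v_1$ is a continuation trajectory, the naive bound $0<E(v_1)<\kappa$ alone cannot pin down $m$, since the energy formula of Lemma 4.2 gives
\[
E(v_1)=F_s(\bgamma)-F_S(\btheta)-\int_{-\infty}^{\infty}\frac{d\rho_+(\tau)}{d\tau}\int_0^1 H(t,v_1(\tau,t))\,dt\,d\tau.
\]
One must combine $E(v_1)>0$ with the pointwise bound $\int_0^1 H(t,\cdot)\,dt\leq -a_-$, the estimates $|f_s(\bgamma)|<\varepsilon(s)$ and $f_S(\btheta)\geq b_-$, and crucially the inequality $b_+>(S-s)a_+$, to conclude $m\kappa>-\kappa$ and hence $m\geq 0$. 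The strict inequality $b_+>a_+$ was arranged precisely by the hypothesis $\|H\|<\sigma$ (which allowed choosing $\kappa>\|H\|$), and this is what closes the argument in case (B).
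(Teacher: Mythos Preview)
Your outline matches the paper's proof closely: gluing is routine, Proposition 4.4 handles compactness, virtual dimension forces $N=2$, and the two breaking patterns are analyzed separately with Theorem 3.8 pinning down $\btheta\in a_s$ in the first case. The paper's treatment of Case 1 is slightly cleaner than yours: rather than bounding $|m|\kappa$ by $4\varepsilon(s)$, it observes directly that for $s$ small enough one has $F_s=f_s$ on $a_s$ (since $|F_s(\bgamma)|<\varepsilon(s)<\min\{b_+,-b_-\}$ forces $F_s(\bgamma)\in[b_-,b_+)$), so the distinguished condition for $(v_1,\bar v_1)$ is automatic and the one for $(v_2,\bar v_2)$ follows by subtraction. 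Your version implicitly relies on the same fact to get $|f_s(\bgamma)-f_s(\btheta)|<2\varepsilon(s)$.

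There is one genuine slip in Case 2. To obtain $m\ge 0$ you need a \emph{lower} bound on $F_s(\bgamma)-F_S(\btheta)$, and since $\rho_+$ is non-decreasing this requires the \emph{lower} bound $\int_0^1 H(t,\cdot)\,dt\ge -a_+$, not the upper bound $\int_0^1 H(t,\cdot)\,dt\le -a_-$ that you wrote. With your stated inequality the integral $\int\frac{d\rho_+}{d\tau}\int_0^1 H$ is only bounded \emph{above}, which gives no control on $m$ from below. The correct chain (as in the paper) is
\[
F_s(\bgamma)-F_S(\btheta)=E(v_1)+\int\frac{d\rho_+}{d\tau}\int_0^1 H\,dt\,d\tau>-(S-s)a_+\ge -a_+=b_+-a_+-b_--\kappa>\varepsilon(s)-b_--\kappa>f_s(\bgamma)-f_S(\btheta)-\kappa,
\]
using $\varepsilon(s)<b_+-a_+$. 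Your subsequent mention of $b_+>(S-s)a_+$ shows you have the right ingredients in mind; just swap $-a_-$ for $-a_+$ and reverse the inequality.
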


\begin{proof}
The proof is based on the standard gluing-compsctness argument.
For a pair $((u_1,\bar{u}_1), (u_2,\bar{u}_2))
\in \hat{\mathcal{M}}_s^{d,0}(\bgamma,\btheta)\times\mathcal{M}_{\rho_+}^{d,0}(\btheta,\bdelta)$,
the gluing procedure gives a unique connected component of 
$\mathcal{M}_{\rho_+}^1(\bgamma,\bdelta)$, say $\mathcal{M}$,
such that the pair is a compactifying point of $\mathcal{M}$.
Since 
\begin{eqnarray*}
F_s(\bgamma)-F_S(\bdelta)&=&
F_s(\bgamma)-F_s(\btheta)+F_s(\btheta)-F_S(\bdelta)
\\
&=&
f_s(\bgamma)-f_s(\btheta)+f_s(\btheta)-f_S(\bdelta)
\\
&=&
f_s(\bgamma)-f_S(\bdelta),
\end{eqnarray*}
$\mathcal{M}$ is contained in $\mathcal{M}_{\rho_+}^{d,1}(\bgamma,\bdelta)$.
Similarly, we can glue the pairs of 
$\mathcal{M}_{\rho_+}^{d,0}(\bgamma,\btheta)\times\hat{\mathcal{M}}_S^{d,0}(\btheta,\bdelta)$
to make connected components of $\mathcal{M}_{\rho_+}^{d,1}(\bgamma,\bdelta)$.

On the other hand, from Proposition 4.4,
let $\{(u_i,\bar{u}_i)\}\subset \mathcal{M}^{d,1}_{\rho_+}(\bgamma,\bdelta)$
be a sequence which converges to a broken gradient/continuation trajectory
$((v_1,\bar{v}_1),\ldots,(v_N,\bar{v}_N))$ without bubble tree.
In this case, $N$ turns out to be 2 
and $((v_1,\bar{v}_1),(v_2,\bar{v}_2))\in
\hat{\mathcal{M}}_s^0(\bgamma,\btheta)\times\mathcal{M}_{\rho_+}^0(\btheta,\bdelta)$
or 
$\mathcal{M}_{\rho_+}^0(\bgamma,\btheta)\times\hat{\mathcal{M}}_S^0(\btheta,\bdelta)$
for generic $\{J_t\}_{t\in [0,1]}$ because of the virtual dimension counting.

Suppose $((v_1,\bar{v}_1),(v_2,\bar{v}_2))\in
\hat{\mathcal{M}}_s^0(\bgamma,\btheta)\times\mathcal{M}_{\rho_+}^0(\btheta,\bdelta)$
is the limit of the sequence $\{(u_i,\bar{u}_i)\}\subset \mathcal{M}_{\rho_+}^{d,1}(\bgamma,\bdelta)$.
By Lemma 4.3, there exists $s_0$ such that, for $s<s_0$, $E(u_i)<\kappa$;
since $0<E(v_1)$, $0<E(v_2)$ and $E(v_1)+E(v_2)\leq \limsup E(u_i)$,
\[
E(v_1)=F_s(\bgamma)-F_s(\btheta)<\kappa.
\]
Then, by Theorem 3.8, there exists $s_1$ such that, for $s<s_1$, 
we have $\btheta\in a_s$.
Moreover, since $\bgamma,\btheta\in a_s$,
there exists $s_2$ such that, for $s<s_2$, 
$F_s(\bgamma)=f_s(\bgamma)$ and $F_s(\btheta)=f_s(\btheta)$;
and 
$F_s(\bgamma)-F_s(\btheta)=f_s(\bgamma)-f_s(\btheta)$.
Since $(u_i,\bar{u}_i)$ is distinguished, i.e.
$F_s(\bgamma)-F_S(\bdelta)=f_s(\bgamma)-f_S(\bdelta)$,
\begin{eqnarray*}
F_s(\btheta)-F_S(\bdelta) &=& 
F_s(\btheta)-F_s(\bgamma)+F_s(\bgamma)-F_S(\bdelta)
\\
&=&
f_s(\btheta)-f_s(\bgamma)+f_s(\bgamma)-f_S(\bdelta)
\\
&=& f_s(\btheta)-f_S(\bdelta).
\end{eqnarray*}
Thus $((v_1,\bar{v}_1),(v_2,\bar{v}_2))\in
\hat{\mathcal{M}}_s^{d,0}(\bgamma,\btheta)\times\mathcal{M}_{\rho_+}^{d,0}(\btheta,\bdelta)$.

Suppose $((v_1,\bar{v}_1),(v_2,\bar{v}_2))\in
\mathcal{M}_{\rho_+}^0(\bgamma,\btheta)\times\hat{\mathcal{M}}_S^0(\btheta,\bdelta)$
is the limit of the sequence $\{(u_i,\bar{u}_i)\}\subset \mathcal{M}_{\rho_+}^{d,1}(\bgamma,\bdelta)$.
Note that there exists $s_0$ such that, for $s<s_0$, 
$\varepsilon(s)<b_+-a_+$ and $|f_s(\bgamma)|<\varepsilon(s)$ for $\bgamma\in a_s$.
Then, by Lemma 4.2, for $s<s_0$, 
\begin{eqnarray}
F_s(\bgamma)-F_S(\btheta) &=&E(v_1)+\int_{-\infty}^{\infty}\frac{d\rho_+(\tau)}{d\tau}
\int_0^1H(t,v_1(\tau,t))dtd\tau \nonumber
\\ 
&>& (S-s)\int_0^1\min_{x\in M}H(t,x)dt \nonumber
\\
&\geq&-a_+ \nonumber
\\
&=& b_+-a_+-b_--\kappa \nonumber
\\
&>& \varepsilon(s)-b_--\kappa \nonumber
\\
&>&f_s(\bgamma)-f_S(\btheta)-\kappa.
\end{eqnarray}
Since $(u_i,\bar{u}_i)$ is distinguished, i.e.
$F_s(\bgamma)-F_S(\bdelta)=f_s(\bgamma)-f_S(\bdelta)$,
there exists $m\in\mathbb{Z}$ such that
\begin{eqnarray}
F_s(\bgamma)-F_S(\btheta) &=&f_s(\bgamma)-f_S(\btheta)+m\kappa,
\\
F_S(\btheta)-F_S(\bdelta) &=& f_S(\btheta)-f_S(\bdelta)-m\kappa.
\end{eqnarray}
From (3) and (4), we obtain $m\geq 0$;
and from (5), since $F_S(\btheta)-F_S(\bdelta)=E(v_2)>0$ 
and $\kappa>f_S(\btheta)-f_S(\bdelta)$, we obtain $0\geq m$.
Thus $m=0$,
which implies $((v_1,\bar{v}_1),(v_2,\bar{v}_2))\in 
\mathcal{M}_{\rho_+}^{d,0}(\bgamma,\btheta)\times \hat{\mathcal{M}}_S^{d,0}(\btheta,\bdelta)$.
We obtain the compactification of $\mathcal{M}_{\rho_+}^{d,1}(\bgamma,\bdelta)$.
\end{proof}

Similarly, we can prove the following theorems:

\begin{thm} 
There exists $s_0$ such that, for $s<s_0$,
$\mathcal{M}_{\rho_-}^{d,0}(\bgamma,\bdelta)$ for $\bdelta\in a_s$ is compact.
\end{thm}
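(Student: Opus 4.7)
The plan is to mirror the proof of Theorem 4.5 with $\rho_+$ replaced by $\rho_-$, and with the role of $\bgamma\in a_s$ played by $\bdelta\in a_s$. This requires first establishing analogs of Lemma 4.3 (a priori energy bound) and Proposition 4.4 (Floer--Gromov convergence without bubble trees) in the present setting; once these are in hand, the compactness argument for the zero-dimensional moduli space is formally the same as in Theorem 4.5.

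First, I derive the energy bound. For $(u,\bar{u})\in \mathcal{M}_{\rho_-}^d(\bgamma,\bdelta)$ with $\bdelta\in a_s$, Lemma 4.2 gives
\[
E(u) = F_S(\bgamma) - F_s(\bdelta) - \int_{-\infty}^{\infty}\frac{d\rho_-(\tau)}{d\tau}\int_0^1 H(t,u(\tau,t))\, dt\, d\tau.
\]
Since $\rho_-$ is non-increasing and $\int_0^1 H(t,u(\tau,t))\,dt \leq -a_-$, multiplying by the non-positive factor $\frac{d\rho_-}{d\tau}$ reverses the inequality, and integration gives
\[
-\int_{-\infty}^{\infty}\frac{d\rho_-(\tau)}{d\tau}\int_0^1 H(t,u(\tau,t))\, dt\, d\tau \leq (-a_-)(S-s).
\]
Using the distinguished condition $F_S(\bgamma)-F_s(\bdelta) = f_S(\bgamma)-f_s(\bdelta)$, the bound $f_S(\bgamma) < b_+$, and (for $s$ small) $|f_s(\bdelta)| < \varepsilon(s)$ for $\bdelta\in a_s$, I obtain
\[
E(u) < b_+ + \varepsilon(s) + (-a_-)(S-s) \leq b_+ + \varepsilon(s) - a_-.
\]
Since $b_- < a_-$, one has $a_- - b_- > 0$, so for $s$ small enough that $\varepsilon(s) < a_- - b_-$ we conclude $E(u) < b_+ - b_- = \kappa$. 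This is the analog of Lemma 4.3.

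Once this uniform energy bound is available, the Floer--Gromov compactness theorem yields a subsequence of any sequence $\{(u_i,\bar{u}_i)\} \subset \mathcal{M}_{\rho_-}^d(\bgamma,\bdelta)$ with $\bdelta\in a_s$ that converges to a broken gradient/continuation trajectory with bubble trees of types (i)--(iii) as in Proposition 4.4. Exactness of $\iota:L\to M$ kills the sphere and disc-with-boundary bubbles, while $\limsup E(u_i) \leq \kappa < \sigma$ kills the punctured discs in $K(\iota)$. So we get the analog of Proposition 4.4 for $\rho_-$.

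With these two ingredients, the proof finishes exactly as in Theorem 4.5: if $\mathcal{M}_{\rho_-}^{d,0}(\bgamma,\bdelta)$ were not compact, a non-convergent sequence would admit a subsequence converging to a broken trajectory without bubble tree; virtual dimension counting for generic $\{J_t\}$ forces the number of broken pieces to be $1$, so the limit lives in $\mathcal{M}_{\rho_-}^0(\bgamma,\bdelta)$; and the distinguished condition $F_S(\bgamma)-F_s(\bdelta) = f_S(\bgamma)-f_s(\bdelta)$ is preserved along the subsequence, placing the limit in $\mathcal{M}_{\rho_-}^{d,0}(\bgamma,\bdelta)$, a contradiction. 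The main (and in fact only) subtle point is the sign bookkeeping in the energy estimate: for $\rho_-$ the correction term from Lemma 4.2 now has the less favorable sign, which is precisely why we must impose $\bdelta\in a_s$ (rather than $\bgamma\in a_s$ as in Lemma 4.3) so that the small value $|f_s(\bdelta)| < \varepsilon(s)$ absorbs the resulting term.
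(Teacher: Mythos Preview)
Your proposal is correct and follows exactly the approach the paper intends: the paper states Theorems 4.7 and 4.8 immediately after Theorem 4.6 with only the remark ``Similarly, we can prove the following theorems,'' so your explicit mirror argument---swapping $\rho_+$ for $\rho_-$ and $\bgamma\in a_s$ for $\bdelta\in a_s$, with the corresponding sign changes in the energy estimate (using $b_-<a_-$ in place of $a_+<b_+$)---is precisely what is meant.
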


\begin{thm}
There exists $s_0$ such that, for $s<s_0$,
$\mathcal{M}_{\rho_-}^{d,1}(\bgamma,\bdelta)$ for $\bdelta\in a_s$
has a suitable compactification whose boundary is given by
\[
\bigcup_{\btheta\in c_S}
\hat{\mathcal{M}}_S^{d,0}(\bgamma,\btheta)
\times \mathcal{M}_{\rho_-}^{d,0}(\btheta,\bdelta)
\cup
\bigcup_{\btheta\in a_s}
\mathcal{M}_{\rho_-}^{d,0}(\bgamma,\btheta)
\times \hat{\mathcal{M}}_s^{d,0}(\btheta,\bdelta).
\]
\end{thm}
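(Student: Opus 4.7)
The plan is to mirror the proof of Theorem 4.6 with the roles of source and target exchanged and the non-increasing continuation $\rho_-$ in place of $\rho_+$. The first step is to invoke (or record, as the author intends by ``Similarly'') the $\rho_-$-analogues of Lemma 4.3 and Proposition 4.4 tailored to $\bdelta\in a_s$: for small $s$, any distinguished $(u,\bar u)\in \mathcal M_{\rho_-}^d(\bgamma,\bdelta)$ with $\bdelta\in a_s$ satisfies $E(u)<\kappa$, and sequences of such trajectories converge modulo breaking without bubble trees. These follow from Lemma 4.2 by noting that $d\rho_-/d\tau\leq 0$ together with $\int_0^1 H(t,x)\,dt\leq -a_-$ contribute a term at most $(S-s)(-a_-)$, which, combined with the distinguished condition and $|f_s(\bdelta)|<\varepsilon(s)$, yields $E(u)<\kappa$ because $a_->b_-$ and $\varepsilon(s)\to 0$.

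Next I would establish the two inclusions separately. For the gluing direction, given a pair in either of the two products I apply the standard gluing-compactness construction to produce a unique connected component of $\mathcal M_{\rho_-}^{1}(\bgamma,\bdelta)$ with that pair as a compactifying point; the additivity of $f$ and the distinguished conditions on the two pieces combine to give $F_S(\bgamma)-F_s(\bdelta)=f_S(\bgamma)-f_s(\bdelta)$, placing the component inside $\mathcal M_{\rho_-}^{d,1}(\bgamma,\bdelta)$.

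For the converse, a sequence $\{(u_i,\bar u_i)\}\subset \mathcal M_{\rho_-}^{d,1}(\bgamma,\bdelta)$ that fails to converge has a Floer--Gromov limit $((v_1,\bar v_1),(v_2,\bar v_2))$ without bubble tree, with $N=2$ by virtual dimension counting. The limit lies either in \textbf{(A)} $\hat{\mathcal M}_S^{0}(\bgamma,\btheta)\times\mathcal M_{\rho_-}^{0}(\btheta,\bdelta)$ for some $\btheta\in c_S$, or in \textbf{(B)} $\mathcal M_{\rho_-}^{0}(\bgamma,\btheta)\times\hat{\mathcal M}_s^{0}(\btheta,\bdelta)$ for some $\btheta\in c_s$. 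In Case (B) the second piece has $E(v_2)<\kappa$ by Lemma 3.3 applied after the $\rho_-$-version of Lemma 4.3, so Theorem 3.10 forces $\btheta\in a_s$; then $F_s=f_s$ on both $\btheta$ and $\bdelta$, so the distinguished condition on $(u_i,\bar u_i)$ transfers verbatim to each piece.

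The main obstacle is Case (A), where $\btheta\in c_S$ is unconstrained and $F_S(\btheta)$ need not coincide with $f_S(\btheta)$; this is the analog of the delicate step around equations (3)--(5) in the proof of Theorem 4.6. I would introduce the integer $m\in\mathbb Z$ defined by
\[
F_S(\bgamma)-F_S(\btheta)=f_S(\bgamma)-f_S(\btheta)+m\kappa,\qquad F_S(\btheta)-F_s(\bdelta)=f_S(\btheta)-f_s(\bdelta)-m\kappa,
\]
and show $m=0$. The bound $m\geq 0$ follows from $E(v_1)=F_S(\bgamma)-F_S(\btheta)>0$ and $f_S(\bgamma)-f_S(\btheta)<\kappa$. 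For $m\leq 0$, the $\rho_-$-version of Lemma 4.2 gives
\[
F_S(\btheta)-F_s(\bdelta)=E(v_2)+\int_{-\infty}^{\infty}\frac{d\rho_-(\tau)}{d\tau}\int_0^1 H(t,v_2(\tau,t))\,dt\,d\tau>a_-,
\]
using $d\rho_-/d\tau\leq 0$ and $\int H\,dt\leq -a_-$. Combined with $f_S(\btheta)-f_s(\bdelta)<b_++\varepsilon(s)$ for $\bdelta\in a_s$ and small $s$, this yields $m\kappa<b_+-a_-+\varepsilon(s)<\kappa$ whenever $\varepsilon(s)<a_--b_-$, so $m\leq 0$. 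Thus $m=0$ and the limit lies in $\mathcal M_{\rho_-}^{d,0}(\bgamma,\btheta)\times\hat{\mathcal M}_S^{d,0}(\btheta,\bdelta)$, completing the identification of the compactified boundary.
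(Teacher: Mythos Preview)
Your proposal is correct and follows exactly the ``similarly'' route the paper intends: the argument is the $\rho_-$-mirror of the proof of Theorem~4.6, with $\bdelta\in a_s$ playing the role that $\bgamma\in a_s$ plays there, Theorem~3.10 replacing Theorem~3.8, and the constant $a_-$ (with the condition $\varepsilon(s)<a_--b_-$) replacing $a_+$ in the key estimate. One slip to fix: in your final sentence the factors are transposed---the Case~(A) limit lies in $\hat{\mathcal M}_S^{d,0}(\bgamma,\btheta)\times\mathcal M_{\rho_-}^{d,0}(\btheta,\bdelta)$, not $\mathcal M_{\rho_-}^{d,0}(\bgamma,\btheta)\times\hat{\mathcal M}_S^{d,0}(\btheta,\bdelta)$.
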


For $s<s_0$, Theorem 4.5 and 4.7 allow us to define linear maps $\Phi_+:A_s\to C_S$ 
and $\Phi_-:C_S\to A_s$ by
\[
\Phi_+(\bgamma):=\sum_{\bdelta\in c_S}\sharp\mathcal{M}_{\rho_+}^{d,0}(\bgamma,\bdelta)\bdelta
\]
for $\bgamma\in a_s$, and 
\[
\Phi_-(\bgamma):=\sum_{\bdelta\in a_s}\sharp\mathcal{M}_{\rho_-}^{d,0}(\bgamma,\bdelta)\bdelta
\]
for $\bgamma\in c_S$.
We call $\Phi_+$ and $\Phi_-$ {\it continuations}.
Then Corollary 3.11 and Theorem 4.6 and 4.8 imply the following theorem:

\begin{thm}
For $s<s_0$,
$\Phi_+\circ\partial_s=\partial_S\circ\Phi_+$
and
$\Phi_-\circ\partial_S=\partial_s\circ\Phi_-$.
\end{thm}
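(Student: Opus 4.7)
The plan is to prove each of the two chain-map identities by the standard Floer-theoretic boundary-counting argument, using the compactifications of the one-dimensional moduli spaces supplied by Theorem 4.6 and Theorem 4.8. Since everything is over $\mathbb{Z}_2$, the identity reduces to the fact that the mod-$2$ count of the boundary of a compact $1$-manifold vanishes.

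First I would prove $\Phi_+\circ\partial_s=\partial_S\circ\Phi_+$. Fix $\bgamma\in a_s$ and $\bdelta\in c_S$. By Theorem 4.6, for $s<s_0$ the space $\mathcal{M}_{\rho_+}^{d,1}(\bgamma,\bdelta)$ admits a compactification that is a compact $1$-manifold with boundary, and its boundary has the stated product decomposition. Equating the mod-$2$ count of this boundary to zero gives
\[
\sum_{\btheta\in a_s}\sharp\hat{\mathcal{M}}_s^{d,0}(\bgamma,\btheta)\cdot\sharp\mathcal{M}_{\rho_+}^{d,0}(\btheta,\bdelta)+\sum_{\btheta\in c_S}\sharp\mathcal{M}_{\rho_+}^{d,0}(\bgamma,\btheta)\cdot\sharp\hat{\mathcal{M}}_S^{d,0}(\btheta,\bdelta)=0.
\]
By Corollary 3.12 we have $\partial_s\bgamma\in A_s$, so the first sum is exactly the coefficient of $\bdelta$ in $\Phi_+(\partial_s\bgamma)$; the second sum is by definition the coefficient of $\bdelta$ in $\partial_S(\Phi_+\bgamma)$. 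Summing over $\bdelta\in c_S$ yields $(\Phi_+\circ\partial_s)\bgamma=(\partial_S\circ\Phi_+)\bgamma$ for every generator $\bgamma\in a_s$, hence the chain-map identity on all of $A_s$.

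The identity $\Phi_-\circ\partial_S=\partial_s\circ\Phi_-$ is proved in exactly the same way, replacing Theorem 4.6 by Theorem 4.8: for $\bgamma\in c_S$ and $\bdelta\in a_s$ one reads off from the boundary of $\mathcal{M}_{\rho_-}^{d,1}(\bgamma,\bdelta)$ that
\[
\sum_{\btheta\in c_S}\sharp\hat{\mathcal{M}}_S^{d,0}(\bgamma,\btheta)\cdot\sharp\mathcal{M}_{\rho_-}^{d,0}(\btheta,\bdelta)+\sum_{\btheta\in a_s}\sharp\mathcal{M}_{\rho_-}^{d,0}(\bgamma,\btheta)\cdot\sharp\hat{\mathcal{M}}_s^{d,0}(\btheta,\bdelta)=0,
\]
which identifies, respectively, the coefficient of $\bdelta$ in $\Phi_-(\partial_S\bgamma)$ and in $\partial_s(\Phi_-\bgamma)$; note that by Corollary 3.12 the outer $\partial_s$ acting on $\Phi_-\bgamma\in A_s$ stays inside $A_s$, so the $\btheta$-index set $a_s$ in the second sum is exactly what the right-hand side requires.

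There is essentially no genuine obstacle at this stage: the analytic content — transversality, Floer--Gromov compactness, exclusion of sphere, disc, and $K(\iota)$ bubbling via the $\|H\|<\sigma$ bound from Lemma 4.3, and the gluing-compactness description of the ends of the $1$-dimensional strata — has already been packaged into Theorems 4.5--4.8. The only point demanding care is bookkeeping, namely verifying that the index sets appearing on the two sides of each boundary formula match the respective domains and codomains of $\Phi_{\pm}$, $\partial_s$ and $\partial_S$; Corollary 3.12 is precisely what makes this bookkeeping work.
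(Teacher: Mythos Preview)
Your argument is correct and is exactly the standard boundary-counting argument the paper has in mind; the paper itself gives no proof for this theorem, merely recording that it follows from Theorem 4.6, Theorem 4.8, and the vanishing of cross-terms in $\partial_s$ (stated as Corollary 3.11, though your citation of Corollary 3.12 is the cleaner packaging of the same fact). Your bookkeeping with the index sets $a_s$ and $c_S$ is precisely what is needed, and there is nothing to add.
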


Thus, for $s<s_0$, $\Phi_+$ and $\Phi_-$ induce homomorphisms
$\Phi_{+*}:H(A_s,\partial_s)\to H(C_S,\partial_S)$
and 
$\Phi_{-*}:H(C_S,\partial_S)\to H(A_s,\partial_s)$, respectively.

\section{\bf Homotopies of continuations}

Let $\brho=\brho(w,\tau):[0,\infty)\times\mathbb{R}\to \mathbb{R}$ be a smooth function
such that 
\begin{itemize}
\item $\frac{\partial \brho}{\partial \tau}\geq0$ when $\tau<0$, 
and $\frac{\partial \brho}{\partial \tau}\leq 0$ when $\tau>0$,
\item $w\mapsto\brho(w,0)$ is a monotone map onto $[s,S]$,
\item $\brho(0,\tau)\equiv s$,
\item for $w$ large enough,
\[
\brho(w,\tau)=\begin{cases}
\rho_+(\tau+w) & \tau\leq 0,
\\
\rho_-(\tau-w) & \tau\geq 0. 
\end{cases}
\]
\end{itemize}
Let $\rho_w(\tau)$ denote $\brho(w,\tau)$.

Let $s\in T$. We define $\mathcal{M}_{\brho}(\bgamma,\bdelta)$ for $\bgamma,\bdelta\in c_s$ by
\[
\mathcal{M}_{\brho}(\bgamma,\bdelta):=
\left\{(w,(u,\bar{u})):
w\in[0,\infty)
\mbox{ and }
(u,\bar{u})\in\mathcal{M}_{\rho_w}(\bgamma,\bdelta)
\right\}.
\]

\begin{thm}
For generic $\{J_t\}_{t\in[0,1]}$, $\mathcal{M}_{\brho}(\bgamma,\bdelta)$ is a finite dimensional 
smooth manifold with boundary $\partial \mathcal{M}_{\brho}(\bgamma,\bdelta)
=\{(0,(u,\bar{u}))\in \mathcal{M}_{\brho}(\bgamma,\bdelta)\}$.
\end{thm}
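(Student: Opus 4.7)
The strategy is to realize $\mathcal{M}_{\brho}(\bgamma,\bdelta)$ as the zero set of a smooth Fredholm section of a Banach bundle over a Banach manifold that incorporates the parameter $w$, and then apply parametrized Sard--Smale exactly as in the proofs of Theorem 3.1 and Theorem 4.1. Concretely, form the Banach manifold $\mathcal{B}:=[0,\infty)\times\mathcal{P}(\bgamma,\bdelta)$, where $\mathcal{P}(\bgamma,\bdelta)$ is the usual space of $W^{1,p}_{\mathrm{loc}}$-strips $u:\mathbb{R}\times[0,1]\to M$ lifting to $\bar u:\mathbb{R}\times\{0,1\}\to L$ with $u(\tau,i)=\iota\circ\bar u(\tau,i)$ and with prescribed asymptotics $\bgamma$ at $-\infty$ and $\bdelta$ at $+\infty$. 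Over $\mathcal{B}$, consider the Banach bundle $\mathcal{E}$ whose fiber at $(w,u)$ is $L^p(u^*TM)$, together with the smooth section
\[
\mathcal{S}(w,u):=\frac{\partial u}{\partial\tau}+J_t(u)\left(\frac{\partial u}{\partial t}-\rho_w(\tau)X_H(u)\right).
\]
Then $\mathcal{M}_{\brho}(\bgamma,\bdelta)=\mathcal{S}^{-1}(0)$, and since the boundary value $\iota\circ\bar u$ never switches sheets at non-injective points, the usual local theory for the perturbed Cauchy--Riemann equation is available just as in Theorem 3.1 and 4.1.

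The linearization of $\mathcal{S}$ at a zero $(w,u)$ in the direction $(\delta w,\xi)$ has the form
\[
D\mathcal{S}_{(w,u)}(\delta w,\xi)=D_u\mathcal{S}(\xi)-\delta w\cdot J_t(u)\frac{\partial\brho(w,\tau)}{\partial w}X_H(u),
\]
where $D_u\mathcal{S}$ is the standard Fredholm operator associated to the perturbed Cauchy--Riemann equation with Lagrangian boundary condition on $\iota(L)$. Adjoining the one-dimensional parameter $[0,\infty)$ raises the Fredholm index by one, which will be the expected dimension of $\mathcal{M}_{\brho}(\bgamma,\bdelta)$ once transversality is established.

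To achieve transversality, I would form the universal moduli space $\mathcal{M}^{\mathrm{univ}}_{\brho}:=\{(\{J_t\},w,u):\mathcal{S}_{J}(w,u)=0\}$ and show it is a smooth Banach manifold by checking that the linearization which also allows variations of $J_t$ is surjective onto $L^p(u^*TM)$. This is the standard somewhere-injective argument used in \cite{fl1,fl2,oh1,sa1,ms}: if a nonzero $\eta$ lay in the cokernel of $D_u\mathcal{S}$, one constructs a variation $\delta J_t$ supported near an interior point where $u$ is injective and $\partial u/\partial\tau\neq 0$ that makes $\eta$ fail to annihilate the resulting perturbation of $\mathcal{S}$. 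This argument goes through in our immersed setting because at each boundary point the condition still lies on a single local sheet of $\iota(L)$. Projecting $\mathcal{M}^{\mathrm{univ}}_{\brho}$ to the space of almost complex structures, Sard--Smale then yields a residual set of generic $\{J_t\}$ for which $\mathcal{M}_{\brho}(\bgamma,\bdelta)$ is a smooth manifold of the expected dimension.

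The boundary description follows because $w$ ranges over $[0,\infty)$, which is itself a manifold with boundary $\{0\}$; the projection $\mathcal{M}_{\brho}(\bgamma,\bdelta)\to[0,\infty)$ inherits this boundary structure, so the topological boundary is exactly $\{(0,(u,\bar u))\}$, corresponding to the fiber $\mathcal{M}_{\rho_0}(\bgamma,\bdelta)$, which at $w=0$ is cut out transversely by the same choice of generic $J_t$ (since $\rho_0\equiv s$ reduces the equation to the one treated in Theorem 3.1). The main technical obstacle is the transversality of the universal section, i.e.\ ensuring that variations of $J_t$ together with the additional parameter $w$ surject onto the cokernel of $D_u\mathcal{S}$ uniformly in $w\in[0,\infty)$; once this is secured by the somewhere-injective trick above, the remainder of the argument is a routine application of the implicit function theorem and Sard--Smale.
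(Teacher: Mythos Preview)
The paper does not supply a proof of this theorem: like Theorems~3.1 and~4.1, it is stated as a standard transversality fact, with the justification being the sentence (appearing before Theorem~3.1 and again before Theorem~4.1) that ``since the boundary value $\iota\circ\bar{u}$ does not switch sheets at non-injective points of the immersion, we can use the usual local theory of the perturbed Cauchy--Riemann equation,'' together with the references \cite{ak}, \cite{fl1}, \cite{fl2}, \cite{oh1}. Your proposal is therefore not so much an alternative to the paper's proof as a fleshing-out of what the paper leaves implicit: the parametrized Fredholm setup over $[0,\infty)\times\mathcal{P}(\bgamma,\bdelta)$, the universal moduli space, and Sard--Smale. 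That is exactly the argument underlying the cited references, adapted in the obvious way to carry the extra parameter $w$, and it is the right thing to write down.

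One small caution about the boundary description. At $w=0$ you have $\rho_0\equiv s$, so the equation is $\tau$-translation invariant and the fiber $\{0\}\times\mathcal{M}_{\rho_0}(\bgamma,\bdelta)$ is the \emph{unquotiented} moduli space $\mathcal{M}_s(\bgamma,\bdelta)$, not $\hat{\mathcal{M}}_s(\bgamma,\bdelta)$. Your phrase ``reduces the equation to the one treated in Theorem~3.1'' is correct in spirit, but note that Theorem~3.1 is stated for the quotient; what you actually need is that $\mathcal{M}_s(\bgamma,\bdelta)$ itself is cut out transversely, which of course follows from the same generic choice of $\{J_t\}$. To get a genuine manifold-with-boundary structure near $w=0$ one should also check that the parametrized linearization $D\mathcal{S}_{(w,u)}$ is surjective at boundary points $(0,u)$, not only that the restricted operator $D_u\mathcal{S}$ is; since the extra term $-\delta w\cdot J_t(u)\,\partial_w\brho(0,\tau)\,X_H(u)$ vanishes at $w=0$ (because $\partial_w\brho(0,\tau)$ is tangent to the constant family), this reduces to surjectivity of $D_u\mathcal{S}$ alone, which is precisely the transversality already arranged for $\mathcal{M}_s(\bgamma,\bdelta)$. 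With that remark added, your sketch is complete.
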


Let $\mathcal{M}_{\brho}^k(\bgamma,\bdelta)$ denote the $k$-dimensional component of 
$\mathcal{M}_{\brho}(\bgamma,\bdelta)$.
Moreover, we define $\mathcal{M}_{\brho}^d(\bgamma,\bdelta)$ by
\[
\mathcal{M}_{\brho}^d(\bgamma,\bdelta):=
\left\{(w,(u,\bar{u})):
w\in[0,\infty)
\mbox{ and }
(u,\bar{u})\in\mathcal{M}_{\rho_w}^d(\bgamma,\bdelta)
\right\}.
\]
Let $\mathcal{M}_{\brho}^{d,k}(\bgamma,\bdelta)$ denote the $k$-dimensional component of 
$\mathcal{M}_{\brho}^d(\bgamma,\bdelta)$.

\begin{lem}
There exists $s_0$ such that for $s<s_0$
\[
E(u)<\kappa
\]
for $\bgamma,\bdelta\in a_s$ and $(u,\bar{u})\in \mathcal{M}_{\brho}^d(\bgamma,\bdelta)$.
\end{lem}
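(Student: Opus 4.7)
The plan is to adapt the proof of Lemma 4.3, working at each fixed $w$ and then uniformizing. First I would fix $w \in [0,\infty)$ and observe that $\rho_w(\tau) := \brho(w,\tau)$ is a continuation function with $s_- = s_+ = s$, so Lemma 4.2 applies and yields
\[
E(u) = F_s(\bgamma) - F_s(\bdelta) - \int_{-\infty}^{\infty} \frac{d\rho_w(\tau)}{d\tau}\int_0^1 H(t,u(\tau,t))\,dt\,d\tau.
\]
The $F_s$ difference is already small: since $\bgamma,\bdelta \in a_s$, we have $|F_s(\bgamma)|, |F_s(\bdelta)| < \varepsilon(s)$, so this contributes at most $2\varepsilon(s)$, which tends to $0$ as $s \to 0$.

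The real work is controlling the correction integral, and the natural move is to split it at $\tau = 0$ to exploit the two monotonicity hypotheses on $\brho$. On $(-\infty,0)$, where $\frac{\partial \brho}{\partial\tau}\ge 0$, the pointwise bound $\int_0^1 H(t,u(\tau,t))\,dt \geq \int_0^1 \min_x H(t,x)\,dt = -a_+$ shows that the integrand is at most $a_+\,\frac{d\rho_w}{d\tau}$. On $(0,\infty)$, where $\frac{\partial \brho}{\partial\tau}\le 0$, the dual bound $\int_0^1 H\,dt \le -a_-$ gives the integrand at most $a_-\,\frac{d\rho_w}{d\tau}$. Each piece telescopes in $\tau$, and the two contributions sum to
\[
(a_+ - a_-)(\rho_w(0) - s) \;=\; \|H\|(\rho_w(0) - s) \;\leq\; \|H\|(S - s),
\]
since the monotonicity of $\brho$ in $\tau$ forces $\rho_w(0) \leq S$.

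Combining, $E(u) < 2\varepsilon(s) + \|H\|(S-s) \leq 2\varepsilon(s) + \|H\|$. Because $\|H\| < \kappa$ by the hypothesis carried over from Section 4, it suffices to pick $s_0 > 0$ small enough that $2\varepsilon(s_0) < \kappa - \|H\|$; then $E(u) < \kappa$ for every $s < s_0$, every $\bgamma,\bdelta \in a_s$, and every $w \in [0,\infty)$ simultaneously.

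I do not anticipate any serious obstacle: this is essentially the same bookkeeping calculation as in Lemma 4.3, with the one added wrinkle that the estimate must be uniform in the homotopy parameter $w$. That uniformity is bought cheaply by the $w$-independent bound $\rho_w(0) \leq S$ built into the definition of $\brho$; the distinguished hypothesis does not even need to be invoked separately, since for $\bgamma,\bdelta\in a_s$ and $s$ small we already have $F_s = f_s$ on $a_s$.
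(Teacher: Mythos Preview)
Your proposal is correct and follows essentially the same approach as the paper: apply Lemma~4.2, split the correction integral at $\tau=0$ to exploit the sign of $\partial\rho_w/\partial\tau$ on each half, and bound the result by $2\varepsilon(s)+(S-s)\|H\|<\kappa$ for $s$ small. The only cosmetic difference is that the paper routes the bound on $F_s(\bgamma)-F_s(\bdelta)$ through the distinguished condition before invoking $|f_s|<\varepsilon(s)$, whereas you observe (correctly) that $F_s=f_s$ on $a_s$ for small $s$ so the distinguished hypothesis is automatic here.
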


\begin{proof}
By Lemma 4.2, we obtain
\begin{eqnarray*}
E(u)&=&F_s(\bgamma)-F_s(\bdelta)
\\
&&
-\int_{-\infty}^0\frac{\partial \rho_w(\tau)}{\partial \tau}\int_0^1H(t,u(\tau,t))dtd\tau
\\
&&
-\int_0^{\infty}\frac{\partial \rho_w(\tau)}{\partial \tau}\int_0^1H(t,u(\tau,t))dtd\tau
\\
&\leq&
F_s(\bgamma)-F_s(\bdelta)+(S-s)(a_+-a_-).
\end{eqnarray*}
Note that there exists $s_0$ such that, for $s<s_0$,
$2\varepsilon(s)<\kappa-\|H\|$ and $|f_s(\bgamma)|<\varepsilon(s)$ for $\bgamma\in a_s$
(and $|f_s(\bdelta)|<\varepsilon(s)$ for $\bdelta\in a_s$).
Since $(u,\bar{u})$ is distinguished, for $s<s_0$,
\begin{eqnarray*}
E(u)&\leq& F_s(\bgamma)-F_s(\bdelta)+(S-s)(a_+-a_-)
\\
&=&
f_s(\bgamma)-f_s(\bdelta)+(S-s)(a_+-a_-)
\\
&\leq &2\varepsilon(s)+(S-s)(a_+-a_-)
\\
&<& \kappa.
\end{eqnarray*}
\end{proof}

\begin{pro}
There exists $s_0$ such that, for $s<s_0$,
$\{(w_i,(u_i,\bar{u}_i))\}\subset \mathcal{M}_{\brho}^d(\bgamma,\bdelta)$ 
for $\bgamma,\bdelta\in a_s$
has a subsequece which converges to 
a broken gradient/continuation trajectory without bubble tree
in the sense of Floer--Gromov convergence.
\end{pro}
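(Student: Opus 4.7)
The plan is to mirror the proof of Proposition 4.4, but with the extra parameter $w_i \in [0,\infty)$ requiring separate treatment. First I would invoke Lemma 5.2 to obtain the uniform energy bound $E(u_i)<\kappa$ for $s$ less than some $s_0$, since both endpoints $\bgamma,\bdelta$ lie in $a_s$ and the trajectories are distinguished. This gives the a priori bound needed to apply any Floer--Gromov type compactness.

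Next I would pass to a subsequence of $\{w_i\}$, which either converges to some $w_\infty\in[0,\infty)$ or diverges to $+\infty$; these two cases produce genuinely different limit objects and must be analyzed separately. In the first case, the continuation functions $\rho_{w_i}$ converge in $C^\infty_{\mathrm{loc}}$ to $\rho_{w_\infty}$, and the standard Floer--Gromov compactness theorem (\cite{fl1}, \cite{fl2}, \cite{sa1}) yields a subsequence converging to a broken configuration consisting of some descending gradient trajectories at parameter $s$, a single $\rho_{w_\infty}$-continuation trajectory, and further descending gradient trajectories at parameter $s$. In the second case, the profile $\rho_{w_i}$ pulls apart into the two pieces $\rho_+(\tau+w_i)$ and $\rho_-(\tau-w_i)$ separated by an arbitrarily long plateau at height $S$. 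Reparametrizing by $\tau\mapsto \tau-w_i$ and $\tau\mapsto\tau+w_i$ respectively, and possibly also translating in $\tau$ within the middle plateau, the standard bubbling-off/gluing analysis produces a limit of the form
\[
((v_1,\bar v_1),\ldots,(v_j,\bar v_j),(v_{j+1},\bar v_{j+1}),(v_{j+2},\bar v_{j+2}),\ldots,(v_{k},\bar v_k),(v_{k+1},\bar v_{k+1}),\ldots,(v_N,\bar v_N))
\]
where the first block consists of $\hat{\mathcal M}_s$-trajectories, $(v_{j+1},\bar v_{j+1})$ is a $\rho_+$-continuation trajectory, the middle block consists of $\hat{\mathcal M}_S$-trajectories, $(v_{k+1},\bar v_{k+1})$ is a $\rho_-$-continuation trajectory, and the final block consists of $\hat{\mathcal M}_s$-trajectories. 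This is precisely the expected shape of a broken gradient/continuation trajectory in $\mathcal M_{\brho}^d$.

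The final step is to exclude bubble trees. As in Proposition 3.4 and Proposition 4.4, the tail bubbles must be of one of the three types (i) pseudoholomorphic spheres, (ii) pseudoholomorphic discs with boundary on $\iota(L)$, or (iii) punctured pseudoholomorphic discs in $K(\iota)$. Exactness of $\iota$ rules out (i) and (ii) directly, and since the total symplectic area of the bubble tree is bounded above by $\limsup E(u_i)\leq\kappa<\sigma$ (by Lemma 5.2 together with our choice $\kappa<\sigma$), type (iii) is excluded by the definition of $\sigma$.

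The main obstacle I expect is the bookkeeping in the case $w_i\to\infty$: one must verify carefully that the rescaled/translated limits of $u_i$ genuinely split into the two continuation pieces with the prescribed asymptotic parameters, and that the intermediate asymptotic limits are critical points in $c_S$ rather than something pathological. This is handled by combining the uniform $C^0$-bound coming from the energy estimate with the standard strip-shrinking and neck-stretching analysis, but it is the only place where essential care beyond the Proposition 4.4 argument is needed.
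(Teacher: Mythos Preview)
Your proposal is correct and follows essentially the same approach as the paper's proof: you obtain the uniform energy bound (the paper cites Lemma 4.2 for mere boundedness and Lemma 5.2 for the sharper $E(u_i)<\kappa$, while you go straight to Lemma 5.2, which suffices), split into the cases $w_i\to w_\infty\in[0,\infty)$ and $w_i\to\infty$ (the paper further separates $w_\infty=0$ as its case (A), but this is purely cosmetic), describe the resulting broken trajectories, and rule out bubbles of types (i)--(iii) exactly as in Propositions 3.4 and 4.4.
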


\begin{proof}
By Lemma 4.2, $E(u_i)$ is uniformly bounded,
and the Floer--Gromov compactness theorem implies that $\{(w_i,(u_i,\bar{u}_i))\}$
has a subsequence which converges to a gradient/continuation trajectory of
\begin{itemize}
\item[(A)] $\partial\mathcal{M}_{\brho}(\bgamma,\bdelta)
=\{(0,(u,\bar{u}))\in\mathcal{M}_{\brho}(\bgamma,\bdelta)\}$ when $\lim w_i=0$,
\item[(B)] $\hat{\mathcal{M}}_s(\bgamma,\btheta_1)\times\cdots\times
\hat{\mathcal{M}}_s(\btheta_{i-1},\btheta_i)\times
\mathcal{M}_{\rho_{w}}(\btheta_i,\btheta_{i+1})\times
\hat{\mathcal{M}}_s(\btheta_{i+1},\btheta_{i+1})\times\cdots\times
\hat{\mathcal{M}}_s(\btheta_{N-1},\bdelta)$ when $\lim w_i=w\in (0,\infty)$,
\item[(C)] $\hat{\mathcal{M}}_s(\bgamma,\btheta_1)\times\cdots\times
\hat{\mathcal{M}}_s(\btheta_{i-1},\btheta_i)\times
\mathcal{M}_{\rho_+}(\btheta_i,\btheta_{i+1})\times
\hat{\mathcal{M}}_S(\theta_{i+1},\btheta_{i+2})\times\cdots\times
\hat{\mathcal{M}}_S(\btheta_{j-1},\btheta_j)\times
\mathcal{M}_{\rho_-}(\btheta_j,\btheta_{j+1})\times
\hat{\mathcal{M}}_s(\btheta_{j+1},\btheta_{j+2})\times\cdots\times 
\hat{\mathcal{M}}_s(\btheta_{N-1}, \\
\bdelta)$ when $\lim w_i=\infty$
\end{itemize}
with bubble trees; the tail components of the bubble trees are
\begin{itemize}
\item[(i)] a pseudoholomorphic sphere $v:S^2\to M$, 
\item[(ii)] a pseudoholomorphic disc
$v:D^2\to M$ with $\bar{v}:\partial D^2\to L$ such that $v|_{\partial D^2}=\iota\circ \bar{v}$,
\item[(iii)] a pseudoholomorphic disc $v:D^2\to M$ of $(v,\bar{v})\in K(\iota)$.
\end{itemize}
But, since our Lagrangian immersion is exact, the bubbles of (i) and (ii) can not occur.
Moreover, by Lemma 5.2, there exists $s_0$ such that for $s<s_0$
the symplectic area of the bubble trees is less than or equal to $\kappa\ (<\sigma)$, 
and the bubbles of (iii) can not occur.
Hence there is no bubble tree and
the subsequence converges to the broken gradient/continuation trajectory.
\end{proof}

To define our homotopy, we need the following compactness theorems:

\begin{thm} 
There exists $s_0$ such that, for $s<s_0$, 
$\mathcal{M}_{\brho}^{d,0}(\bgamma,\bdelta)$ for $\bgamma,\bdelta\in a_s$ is compact.
\end{thm}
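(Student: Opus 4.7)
The plan is to mimic the template of Theorem 4.5 (and, in spirit, Theorem 3.5), with Proposition 5.3 replacing Proposition 4.4. Suppose, toward a contradiction, that for some $s<s_0$ (with $s_0$ small enough that Lemma 5.2 and Proposition 5.3 both apply) the space $\mathcal{M}_{\brho}^{d,0}(\bgamma,\bdelta)$ is not compact, and pick a sequence $\{(w_i,(u_i,\bar{u}_i))\}\subset \mathcal{M}_{\brho}^{d,0}(\bgamma,\bdelta)$ no subsequence of which converges in $\mathcal{M}_{\brho}^{d,0}(\bgamma,\bdelta)$. By Proposition 5.3, after passing to a subsequence, the sequence Floer--Gromov converges to a broken gradient/continuation trajectory without bubble tree, fitting one of the three patterns (A), (B), (C) listed there.

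Next, I would rule out nontrivial breakings by a virtual dimension count. Since $\mathcal{M}_{\brho}^{d,0}(\bgamma,\bdelta)$ is zero-dimensional and transversality holds for generic $\{J_t\}$, any broken stratum in cases (A) or (B) with more than one factor has negative virtual dimension and is therefore empty; so $N=1$ and the limit lies in $\{w\}\times \mathcal{M}_{\rho_w}^{0}(\bgamma,\bdelta)$ for some $w\in[0,\infty)$, where the boundary case $w=0$ is permitted. Case (C), in which $\lim w_i=\infty$, necessarily involves at least the two continuation factors $\mathcal{M}_{\rho_+}$ and $\mathcal{M}_{\rho_-}$, hence its virtual dimension is at most $-1$ and the stratum is empty for generic $\{J_t\}$. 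Thus the limit is a single unbroken point $(w,(v,\bar{v}))$ of $\mathcal{M}_{\brho}(\bgamma,\bdelta)$.

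To conclude, one checks that the distinguished condition $F_s(\bgamma)-F_s(\bdelta)=f_s(\bgamma)-f_s(\bdelta)$ passes to the limit: it holds along the sequence by hypothesis, and $F_s$ is continuous in the Floer--Gromov topology, so it holds for $(v,\bar{v})$ as well. Therefore $(w,(v,\bar{v}))\in \mathcal{M}_{\brho}^{d,0}(\bgamma,\bdelta)$, contradicting the choice of sequence. This contradiction proves compactness.

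I expect the main obstacle to be case (C): confirming that any $\lim w_i=\infty$ configuration lies in an empty stratum. Two technical points arise. First, the virtual-dimension bookkeeping when both $\rho_+$ and $\rho_-$ factors appear together with several flanking gradient trajectories must be set up so that the total codimension exceeds one. Second, intermediate critical points could a priori lie in $b_S$ rather than $a_s$ or $a_S$, which would complicate tracking the distinguished condition; this is handled by invoking Theorem 3.8 and Theorem 3.10 (shrinking $s_0$ further if necessary) to force all intermediate critical points into $a_s$ or $a_S$, so the dim-count argument is unambiguous, exactly in the spirit of the second half of the proof of Theorem 4.6.
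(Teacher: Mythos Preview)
Your proposal is correct and follows essentially the same route as the paper: assume non-compactness, extract a Floer--Gromov limit via Proposition~5.3, rule out genuine breaking by virtual dimension counting (so $N=1$), observe that the distinguished condition is preserved in the limit, and derive a contradiction. Your treatment is in fact more explicit than the paper's, which compresses the case analysis (A)/(B)/(C) into the single phrase ``$N$ turns out to be $1$ \ldots\ because of the virtual dimension counting.''

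One remark: the concerns in your final paragraph---tracking intermediate critical points through $a_s$ versus $b_s$ via Theorems~3.8 and~3.10---are not actually needed here. Those refinements matter for Theorem~5.5, where one must identify the boundary strata of the one-dimensional moduli space precisely and verify that each piece is distinguished. For the zero-dimensional statement at hand, the virtual-dimension count alone already forces the limit to be a single unbroken element of $\mathcal{M}_{\brho}^{0}(\bgamma,\bdelta)$, regardless of which critical-point sets any hypothetical intermediate $\btheta$ might belong to. So your anticipated ``main obstacle'' does not arise in this proof; save that machinery for the next theorem.
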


\begin{proof}
Let $s_0$ be as in Proposition 5.3, and $s<s_0$.
Suppose on the contrary that $\mathcal{M}_{\brho}^{d,0}(\bgamma,\bdelta)$ 
is not compact.
Then there exists a sequence
$\{(w_i,(u_i,\bar{u}_i))\} \\
\subset \mathcal{M}_{\brho}^{d,0}(\bgamma,\bdelta)$ 
such that any subsequence does not converge in 
$\mathcal{M}_{\brho}^{d,0}(\bgamma,\bdelta)$.
On the other hand, from Proposition 5.3,
$\{(w_i,(u_i,\bar{u}_i))\}$ has a subsequence which
converges to a broken gradient/continuation trajectory 
$((w,(v_1,\bar{v}_1)), \\
\ldots,(w, (v_N,\bar{v}_N)))$ without bubble tree.
In this case, $N$ turns out to be 1 and 
$(w,(v_1,\bar{v}_1))\in\mathcal{M}_{\brho}^0(\bgamma,\bdelta)$
for generic $\{J_t\}_{t\in[0,1]}$
because of the virtual dimension counting.
Since the subsequence preserves the condition 
$F_s(\bgamma)-F_s(\bdelta)=f_s(\bgamma)-f_s(\bdelta)$,
the limit $(w,(v_1,\bar{v}_1))$ is in
$\mathcal{M}_{\brho}^{d,0}(\bgamma,\bdelta)$,
which contradicts that any subsequence does not converge in 
$\mathcal{M}_{\brho}^{d,0}(\bgamma,\bdelta)$. 
Thus $\mathcal{M}_{\brho}^{d,0}(\bgamma,\bdelta)$ is compact.
\end{proof}

\begin{thm} 
There exists $s_0$ such that, for $s<s_0$, 
$\mathcal{M}_{\brho}^{d,1}(\bgamma,\bdelta)$ 
for $\bgamma,\bdelta\in a_s$
has a suitable compactification whose boundary consists of
\begin{itemize}
\item[(A)] $\partial\mathcal{M}_{\brho}^{d,1}(\bgamma,\bdelta)=\{(0,(u,\bar{u}))\in
\mathcal{M}_{\brho}^{d,1}(\bgamma,\bdelta)\}$,
\item[(B1)] $\bigcup_{\btheta\in a_s}
\hat{\mathcal{M}}_s^{d,0}(\bgamma,\btheta)
\times \mathcal{M}_{\brho}^{d,0}(\btheta,\bdelta)$,
\item[(B2)] $\bigcup_{\btheta\in a_s}
\mathcal{M}_{\brho}^{d,0}(\bgamma,\btheta)
\times \hat{\mathcal{M}}_s^{d,0}(\btheta,\bdelta)$,
\item[(C)] $\bigcup_{\btheta\in c_S}\mathcal{M}_{\rho_+}^{d,0}(\bgamma,\btheta)\times
\mathcal{M}_{\rho_-}^{d,0}(\btheta,\bdelta)$.
\end{itemize}
\end{thm}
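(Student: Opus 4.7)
The plan is to follow the same template as Theorems 3.6 and 4.6, combining Proposition 5.3 (Floer--Gromov compactness without bubbling) with virtual dimension counting to identify the limits, and then gluing to verify that each boundary stratum is actually attained. Write $\mathcal{M} := \mathcal{M}_{\brho}^{d,1}(\bgamma,\bdelta)$ and take a sequence $\{(w_i,(u_i,\bar u_i))\}\subset\mathcal{M}$ with no convergent subsequence in $\mathcal{M}$. Pass to a subsequence so that $w_i$ tends to one of $0$, some $w\in(0,\infty)$, or $\infty$, and apply Proposition 5.3 to extract a bubble-free broken limit.

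In the first case, the limit is a single continuation trajectory at $w=0$, and the distinguished condition $F_s(\bgamma)-F_s(\bdelta)=f_s(\bgamma)-f_s(\bdelta)$ passes to the limit, so we land in (A). In the second case, for generic $\{J_t\}$ virtual dimension counting forces $N=2$ with exactly one continuation factor at parameter $w$ and one gradient factor, producing either $\hat{\mathcal{M}}_s^0(\bgamma,\btheta)\times\mathcal{M}_{\rho_w}^0(\btheta,\bdelta)$ or $\mathcal{M}_{\rho_w}^0(\bgamma,\btheta)\times\hat{\mathcal{M}}_s^0(\btheta,\bdelta)$. One checks $\btheta\in a_s$: by Lemma 5.2 both gradient and continuation energies are $<\kappa$ for $s<s_0$, so Theorem 3.8 (applied with $\bgamma\in a_s$ and energy $<\kappa$, resp.\ its analogue in the other direction with $\bdelta\in a_s$) excludes $\btheta\in b_s$. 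Once $\btheta\in a_s$, the argument of Theorem 4.6 (case $\bgamma,\btheta\in a_s$) shows $F_s(\bgamma)-F_s(\btheta)=f_s(\bgamma)-f_s(\btheta)$, and the distinguished property of $(u_i,\bar u_i)$ then forces the second factor to be distinguished as well; this gives (B1) or (B2).

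In the third case $w_i\to\infty$, virtual dimension counting together with the fact that two $\rho_+/\rho_-$ factors appear in any unbroken splitting at infinity forces $N=2$ with
\[
((v_1,\bar v_1),(v_2,\bar v_2))\in\mathcal{M}_{\rho_+}^0(\bgamma,\btheta)\times\mathcal{M}_{\rho_-}^0(\btheta,\bdelta),\qquad \btheta\in c_S,
\]
because any additional gradient factor would raise the total virtual dimension above $1$. To see both factors are distinguished, use Lemma 4.2 on each factor exactly as in the second half of the proof of Theorem 4.6: set $F_s(\bgamma)-F_S(\btheta)=f_s(\bgamma)-f_S(\btheta)+m\kappa$ and $F_S(\btheta)-F_s(\bdelta)=f_S(\btheta)-f_s(\bdelta)-m\kappa$ for some $m\in\mathbb{Z}$; the lower bound $F_s(\bgamma)-F_S(\btheta)>-a_+>f_s(\bgamma)-f_S(\btheta)-\kappa$ derived from $\rho_+$ yields $m\geq 0$, while the symmetric estimate using the non-increasing continuation $\rho_-$ and $\bdelta\in a_s$ gives $m\leq 0$. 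Hence $m=0$ and we land in (C).

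Finally, the gluing construction of \cite{fl1,fl2,sa1}, together with the implicit function theorem at the $w=0$ boundary, shows that every element of (A), (B1), (B2), (C) is indeed a compactifying point of a unique connected component of $\mathcal{M}_{\brho}^1(\bgamma,\bdelta)$, and the telescoping identity on $F$-values (as in the opening paragraph of the proof of Theorem 4.6) shows that this component lies in $\mathcal{M}_{\brho}^{d,1}(\bgamma,\bdelta)$. The main obstacle is the $m$-argument for case (C): one needs both continuation factors to have their Hamiltonian-drift terms controlled through the $a_\pm$ bounds in exactly the right direction, and this is where the hypothesis $\|H\|<\sigma$, packaged into the choice $b_-<a_-\leq 0\leq a_+<b_+$ with $b_+-b_-=\kappa$, is used decisively.
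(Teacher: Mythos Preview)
Your proposal is correct and follows essentially the same route as the paper's proof: Proposition 5.3 for bubble-free Floer--Gromov limits, virtual dimension counting to force $N=2$, Lemma 5.2 plus Theorems 3.8/3.10 to pin $\btheta\in a_s$ in the (B) cases, the $m$-argument via Lemma 4.2 and the choice $b_-<a_-\leq 0\leq a_+<b_+$ for case (C), and standard gluing for the converse. The only cosmetic difference is that the paper does the gluing step first and then the compactness analysis, and writes the (B) factors as $\mathcal{M}_{\brho}^{0}$ (the parametrized space, carrying the datum $w$) rather than $\mathcal{M}_{\rho_w}^{0}$; you should match that notation so the boundary strata agree literally with the statement.
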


Note that, if $\bgamma=\bdelta$, then
\[
\partial \mathcal{M}_{\brho}^{d,1}(\bgamma,\bgamma)=\{(u,\bar{u}):
u(\tau,t)=\gamma(t)\mbox{ and }\bar{u}(i)=\bar{\gamma}(i) \mbox{ for }=0,1\},
\]
and, if $\bgamma\neq \bdelta$, then $\partial \mathcal{M}_{\brho}^{d,1}(\bgamma,\bdelta)=\emptyset$
since the virtual dimension equals $-1$.

\begin{proof}
First, $\mathcal{M}_{\brho}^{d,1}(\bgamma,\bdelta)$ has the boundary
$\partial\mathcal{M}_{\brho}^{d,1}(\bgamma,\bdelta)$,
which is (A).
Otherwise, we compactify $\mathcal{M}_{\brho}^{d,1}(\bgamma,\bdelta)$
by the standard gluing-compsctness argument. 
For a pair $((u_1,\bar{u}_1), (w,(u_2,\bar{u}_2)))
\in \hat{\mathcal{M}}_s^{d,0}(\bgamma,\btheta)\times
\mathcal{M}_{\brho}^{d,0}(\btheta,\bdelta)$, which is (B1),
there exists a unique connected component of 
$\mathcal{M}_{\brho}^1(\bgamma,\bdelta)$, say $\mathcal{M}$,
such that the pair is a compactifying point of $\mathcal{M}$.
Since 
\begin{eqnarray*}
F_s(\bgamma)-F_s(\bdelta)&=&
F_s(\bgamma)-F_s(\btheta)+F_s(\btheta)-F_s(\bdelta)
\\
&=&
f_s(\bgamma)-f_s(\btheta)+f_s(\btheta)-f_s(\bdelta)
\\
&=&
f_s(\bgamma)-f_s(\bdelta),
\end{eqnarray*}
$\mathcal{M}$ is contained in $\mathcal{M}_{\brho}^{d,1}(\bgamma,\bdelta)$.
Similarly, 
we can glue the pairs of (B2) and (C) to make connected components of 
$\mathcal{M}_{\brho}^{d,1}(\bgamma,\bdelta)$.

On the other hand, from Proposition 5.3,
let $\{(w_i,(u_i,\bar{u}_i))\}\subset \mathcal{M}^{d,1}_{\brho}(\bgamma,\bdelta)$
be a sequence which converges to a broken gradient/continuation trajectory
$((w,(v_1,\bar{v}_1)),\ldots,(w,(v_N,\bar{v}_N)))$ of (B) or (C) without bubble tree.
In this case, $N$ turns out to be 2 
and $((w,(v_1,\bar{v}_1)), (w,(v_2,\bar{v}_2)))$ is in 
$\hat{\mathcal{M}}_s^0(\bgamma,\btheta)\times\mathcal{M}_{\brho}^0(\btheta,\bdelta)$,
$\mathcal{M}_{\brho}^0(\bgamma,\btheta)\times\hat{\mathcal{M}}_s^0(\btheta,\bdelta)$ or
$\mathcal{M}_{\rho+}^0(\bgamma,\btheta)\times\mathcal{M}_{\rho_-}^0(\btheta,\bdelta)$
for generic $\{J_t\}_{t\in [0,1]}$ because of the virtual dimension counting.

Suppose $((v_1,\bar{v}_1),(w,(v_2,\bar{v}_2)))\in
\hat{\mathcal{M}}_s^0(\bgamma,\btheta)\times\mathcal{M}_{\brho}^0(\btheta,\bdelta)$
is the limit of the sequence 
$\{(w_i,(u_i,\bar{u}_i))\}\subset \mathcal{M}_{\brho}^{d,1}(\bgamma,\bdelta)$.
By Lemma 5.2, there exists $s_0$ such that, for $s<s_0$, $E(u_i)<\kappa$;
since $0<E(v_1)$, $0<E(v_2)$ and $E(v_1)+E(v_2)\leq \limsup E(u_i)\leq \kappa$,
\[
E(v_1)=F_s(\bgamma)-F_s(\btheta)<\kappa.
\]
Then, by Theorem 3.8, there exists $s_1$ such that, for $s<s_1$,
we have $\btheta\in a_s$.
Moreover, since $\bgamma,\btheta,\bdelta\in a_s$, there exists $s_2$ such that, for $s<s_2$,
$F_s(\bgamma)=f_s(\bgamma)$, $F_s(\btheta)=f_s(\btheta)$ and $F_s(\bdelta)=f_s(\bdelta)$.
Thus $((v_1,\bar{v}_1),(w,(v_2,\bar{v}_2)))\in
\hat{\mathcal{M}}_s^{d,0}(\bgamma,\btheta)\times\mathcal{M}_{\brho}^{d,0}(\btheta,\bdelta)$, 
which is (B1).

Suppose $((w,(v_1,\bar{v}_1)),(v_2,\bar{v}_2))\in
\hat{\mathcal{M}}_{\brho}^0(\bgamma,\btheta)\times\mathcal{M}_s^0(\btheta,\bdelta)$
is the limit of the sequence 
$\{(w_i,(u_i,\bar{u}_i))\}\subset \mathcal{M}_{\brho}^{d,1}(\bgamma,\bdelta)$.
By Lemma 5.2, there exists $s_0$ such that, for $s<s_0$, $E(u_i)<\kappa$;
since $0<E(v_1)$, $0<E(v_2)$ and $E(v_1)+E(v_2)\leq \limsup E(u_i)\leq \kappa$,
\[
E(v_2)=F_s(\btheta)-F_s(\bdelta)<\kappa.
\]
Then, by Theorem 3.10, there exists $s_1$ such that, for $s<s_1$,
we have $\btheta\in a_s$.
Moreover, since $\bgamma,\btheta,\bdelta\in a_s$, there exists $s_2$ such that, for $s<s_2$,
$F_s(\bgamma)=f_s(\bgamma)$, $F_s(\btheta)=f_s(\btheta)$ and $F_s(\bdelta)=f_s(\bdelta)$.
Thus $((w,(v_1,\bar{v}_1)),(v_2,\bar{v}_2))\in
\hat{\mathcal{M}}_{\brho}^{d,0}(\bgamma,\btheta)\times\mathcal{M}_s^{d,0}(\btheta,\bdelta)$, 
which is (B2).

Finally, suppose $((v_1,\bar{v}_1),(v_2,\bar{v}_2))\in
\mathcal{M}_{\rho_+}^0(\bgamma,\btheta)\times\mathcal{M}_{\rho_-}^0(\btheta,\bdelta)$
is the limit of the sequence 
$\{(w_i,(u_i,\bar{u}_i))\}\subset \mathcal{M}_{\brho}^{d,1}(\bgamma,\bdelta)$.
Note that there exists $s_0$ such that, for $s<s_0$, 
$\varepsilon(s)<\min\{b_+-a_+,a_--b_-\}$
and $|f_s(\bgamma)|<\varepsilon(s)$ for $\bgamma\in a_s$.
Then, by Lemma 4.2, for $s<s_0$,
\begin{eqnarray}
F_s(\bgamma)-F_S(\btheta)
&=&E(v_1)+\int_{-\infty}^{\infty}\frac{\partial \rho_+(\tau)}{\partial \tau}\int_0^1H(t,v_1(\tau,t))dtd\tau
\nonumber
\\
&>&(S-s)\int_0^1\min_{x\in M}H(t,x)dt \nonumber
\\
&\geq& -a_+ \nonumber
\\
&=& b_+-a_+-b_--\kappa \nonumber
\\
&>& f_s(\bgamma)-f_S(\btheta)-\kappa,
\end{eqnarray}
and similarly, 
\begin{eqnarray}
F_S(\btheta)-F_s(\bdelta)
&=&E(v_2)+\int_{-\infty}^{\infty}\frac{\partial \rho_-(\tau)}{\partial \tau}\int_0^1H(t,v_1(\tau,t))dtd\tau
\nonumber
\\
&>&-(S-s)\int_0^1\max_{x\in M}H(t,x)dt \nonumber
\\
&\geq& a_- \nonumber
\\
&=& b_++a_--b_--\kappa \nonumber
\\
&>& f_S(\btheta)-f_s(\bdelta)-\kappa.
\end{eqnarray}
Since $(u_i,\bar{u}_i)$ is distinguished, i.e.
$F_s(\bgamma)-F_s(\bdelta)=f_s(\bgamma)-f_s(\bdelta)$,
there exists $m\in\mathbb{Z}$ such that
\begin{eqnarray}
F_s(\bgamma)-F_S(\btheta) &=&f_s(\bgamma)-f_S(\btheta)+m\kappa,
\\
F_S(\btheta)-F_s(\bdelta) &=& f_S(\btheta)-f_s(\bdelta)-m\kappa.
\end{eqnarray}
From (6) and (8), we obtain $m\geq 0$; and from (7) and (9) we obtain $0\geq m$.
Thus $m=0$,
which implies $((v_1,\bar{v}_1),(v_2,\bar{v}_2))\in 
\mathcal{M}_{\rho_+}^{d,0}(\bgamma,\btheta)\times \mathcal{M}_{\rho_-}^{d,0}(\btheta,\bdelta)$,
which is (C).
We obtain the compactification of $\mathcal{M}_{\brho}^{d,1}(\bgamma,\bdelta)$.
\end{proof}

For $s<s_0$, Theorem 5.4 allows us to define a linear map 
$H_s:A_s\to A_s$~by
\[
H_s(\bgamma):=\sum_{\bdelta\in a_s}\sharp\mathcal{M}_{\brho}^{d,0}(\bgamma,\bdelta)\bdelta
\]
for $\bgamma\in a_s$.
We call $H_s$ a {\it homotopy of continuations}.
Then Corollary 3.12 and Theorem 5.5 imply the following theorem:

\begin{thm}
For $s<s_0$,
\[
{\rm id}+H_s\circ \partial_s+\partial_s\circ H_s+\Phi_-\circ \Phi_+=0.
\]
\end{thm}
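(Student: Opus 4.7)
The plan is to apply the standard ``counting boundary points of a compact one-manifold'' argument to $\mathcal{M}_{\brho}^{d,1}(\bgamma,\bdelta)$ for $\bgamma,\bdelta\in a_s$, and to identify each of the four types of boundary components listed in Theorem~5.5 with one of the four terms in the asserted identity.

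First I would fix $\bgamma,\bdelta\in a_s$ and pair the claimed equation against $\bdelta$, reducing it to the $\mathbb{Z}_2$-statement that the mod $2$ count of the boundary of $\mathcal{M}_{\brho}^{d,1}(\bgamma,\bdelta)$ vanishes. By Theorem~5.1 the interior of this space is a smooth one-manifold, and by Theorem~5.5 it admits a compactification whose boundary is the disjoint union of the four pieces (A), (B1), (B2), (C). Since any compact one-manifold with boundary has an even number of boundary points, the mod $2$ count of this boundary is $0$.

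Next I would match each piece of the boundary with a coefficient of the corresponding operator applied to $\bgamma$. The piece (A) is exactly the set of pairs $(0,(u,\bar{u}))$ with $\rho_0\equiv s$; by the remark immediately after Theorem~5.5, this set has precisely one point, namely the constant trajectory $u(\tau,t)=\gamma(t)$, when $\bgamma=\bdelta$, and is empty otherwise. Thus the contribution of (A) to the boundary count is the coefficient of $\bdelta$ in $\mathrm{id}(\bgamma)$. A pair $((u_1,\bar{u}_1),(w,(u_2,\bar{u}_2)))$ in (B1) consists of an $s$-gradient trajectory from $\bgamma$ to some $\btheta\in a_s$ followed by a homotopy-of-continuation trajectory from $\btheta$ to $\bdelta$; reading the indices gives the coefficient of $\bdelta$ in $H_s\circ\partial_s(\bgamma)$. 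Symmetrically, (B2) gives the coefficient of $\bdelta$ in $\partial_s\circ H_s(\bgamma)$, and (C) — a $\rho_+$-continuation trajectory followed by a $\rho_-$-continuation trajectory through an intermediate $\btheta\in c_S$ — gives the coefficient of $\bdelta$ in $\Phi_-\circ\Phi_+(\bgamma)$. Summing the four contributions mod $2$ and using that the total is $0$ yields the stated identity $\mathrm{id}+H_s\circ\partial_s+\partial_s\circ H_s+\Phi_-\circ\Phi_+=0$.

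The main points requiring care are bookkeeping rather than analysis, since all the hard compactness and gluing has been absorbed into Theorems~5.4 and~5.5. The step that deserves the most attention is the identification of the boundary piece (A) with the identity operator: one must verify that the unique $(0,(u,\bar{u}))$ produced when $\bgamma=\bdelta$ is a transversely cut out point of $\mathcal{M}_{\brho}^{d,1}(\bgamma,\bgamma)$, so that it contributes $1$ rather than $0$ to the mod $2$ count. The remaining potential pitfall is the orientation/sign convention, but because the Floer complex here is defined over $\mathbb{Z}_2$, this is automatic. Finally, one has to check that no $\btheta\in b_s$ can appear in (B1) or (B2); this follows from Corollaries~3.9 and~3.11 together with the hypothesis that $\bgamma,\bdelta\in a_s$, so that every intermediate $\btheta$ produced by breaking a distinguished trajectory of small energy must also lie in $a_s$.
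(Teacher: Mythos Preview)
Your proposal is correct and is exactly the argument the paper has in mind: the paper itself gives no explicit proof but simply asserts that the result follows from Corollary~3.12 and Theorem~5.5, and what you have written is the standard unpacking of that assertion---counting mod~$2$ the boundary points of the compactified one-manifold $\mathcal{M}_{\brho}^{d,1}(\bgamma,\bdelta)$ and matching (A), (B1), (B2), (C) with the four operators. Your use of Corollaries~3.9 and~3.11 to ensure the intermediate $\btheta$ lies in $a_s$ is precisely why the paper invokes Corollary~3.12 (their combined statement), and your remark about transversality of the constant solution in (A) is a legitimate point of care that the paper absorbs into the note following Theorem~5.5.
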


\begin{cor}
For $s<s_0$,
\[
\Phi_{-*}\circ\Phi_{+*}={\rm id}:H(A_s,\partial_s)\to H(A_s,\partial_s), 
\]
and hence $\Phi_{+*}:H(A_s,\partial_s)\to H(C_S,\partial_S)$ is injective.
\end{cor}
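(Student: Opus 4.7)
The plan is to derive this corollary directly from the chain homotopy relation proved in Theorem 5.6. That theorem asserts
\[
\mathrm{id} + H_s\circ\partial_s + \partial_s\circ H_s + \Phi_-\circ\Phi_+ = 0
\]
as an identity of $\mathbb{Z}_2$-linear maps $A_s \to A_s$. Since we work with $\mathbb{Z}_2$ coefficients, this rearranges to $\Phi_-\circ\Phi_+ = \mathrm{id}_{A_s} + (\partial_s\circ H_s + H_s\circ\partial_s)$, which is precisely the statement that $H_s$ is a chain homotopy from $\mathrm{id}_{A_s}$ to the composite $\Phi_-\circ\Phi_+$. Recall from Theorem 4.9 that $\Phi_+$ and $\Phi_-$ are genuine chain maps, so $\Phi_-\circ\Phi_+$ is a chain map on $(A_s,\partial_s)$, as is the identity.

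First I would invoke the standard homological algebra principle that chain homotopic chain maps induce equal maps on homology. Concretely, for any cycle $\alpha \in \ker\partial_s \subset A_s$, evaluating the relation on $\alpha$ gives
\[
\Phi_-\Phi_+\alpha = \alpha + \partial_s(H_s\alpha),
\]
which differs from $\alpha$ by an exact element. Hence $[\Phi_-\Phi_+\alpha] = [\alpha]$ in $H(A_s,\partial_s)$, establishing $\Phi_{-*}\circ\Phi_{+*} = \mathrm{id}_{H(A_s,\partial_s)}$.

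The injectivity of $\Phi_{+*}$ is then an immediate algebraic consequence: if $\Phi_{+*}[\alpha] = 0$ in $H(C_S,\partial_S)$, then applying $\Phi_{-*}$ yields $[\alpha] = \Phi_{-*}\Phi_{+*}[\alpha] = 0$. Equivalently, $\Phi_{-*}$ is a left inverse of $\Phi_{+*}$, so $\Phi_{+*}$ is split injective. There is no genuine obstacle at this stage: all the analytic content (the gluing argument, the bubbling exclusion via $\|H\| < \kappa < \sigma$, and the monotonicity bookkeeping that traps $m=0$) has already been absorbed into Theorem 5.6, and what remains is a one-line passage from chain homotopy to equality on homology.
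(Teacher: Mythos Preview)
Your proof is correct and matches the paper's intended argument: the paper gives no explicit proof of this corollary, treating it as an immediate consequence of the chain-homotopy identity in Theorem 5.6 together with the fact (Theorem 4.9) that $\Phi_\pm$ are chain maps. Your unpacking of the standard homological-algebra step is exactly what is implicitly being used.
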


\section{The proof of Theorem 1.1}

Let $(M,\omega)$ be a closed symplectic manifold
or a non-compact symplectic manifold with convex end,
and $\iota:L\to M$ an exact Lagrangian immersion from a closed manifold $L$.
Suppose the non-injective points of $\iota: L\to M$ are transverse.
Let $\varphi^H_1$ be the time one map generated by $X_H$.

Note that, for $\gamma(t):=\varphi^H_t(\delta(t))$, $(\gamma,\bar{\gamma})\in c_1$
if and only if 
\[
\delta(t)\equiv p\in \iota(L)\cap(\varphi^{sH}_1)^{-1}(\iota(L)).
\]
Hence we can identify $(\gamma,\bar{\gamma})\in c_1$ with $(x,x')\in L\times L$
such that $\iota(x)=((\varphi^H_1)^{-1}\circ\iota)(x')$.
Suppose $\iota:L\to M$ and $\varphi^H_1\circ \iota:L\to M$ intersect transversely.
Then we slightly perturb $H$, if necessary, and choose $S\in T$ so that 
the numbers of the elements of
$\{(x,x')\in L\times L:\iota(x)=((\varphi^H_1)^{-1}\circ \iota)(x')\}$
and $c_S$ are equal.
First, our distinguished Floer homology gives
\[
\sharp c_S\geq \dim H(C_S,\partial_S).
\]
Secondly, from Theorem 3.14, there exists $s_0$ such that for $s<s_0$ and $s\in T$
\[
\dim H(A_s,\partial_s)=\sum_{k=0}^{\dim L}\dim H_k(L;\mathbb{Z}_2).
\]
Suppose $\|H\|<\kappa$. Then, from Corollary 5.7, there exists $s_0$ such that, 
for $s<s_0$ and $s\in T$,
$\Phi_{+*}:H(A_s,\partial_s)\to H(C_S,\partial_S)$ is injective. Thus
\[
\dim H(C_S,\partial_S)\geq \dim H(A_s,\partial_s).
\]
Therefore we obtain
\[
\sharp \left\{(x,x')\in L\times L: \iota(x)=((\varphi^H_1)^{-1}\circ\iota)(x')\right\}
\geq \sum_{k=0}^{\dim L}\dim H_k(L;\mathbb{Z}_2).
\]

\end{document}